\newcommand{\C}{\mathbb{C}}
\newcommand{\ZZ}{\mathbb{Z}}
\newcommand{\QQ}{\mathbb{Q}}
\newcommand{\NN}{\mathbb{N}}
\newcommand{\PP}{\mathbb{P}}
\newcommand{\LL}{\mathcal{L}}
\newcommand{\OO}{\mathcal O}
\newcommand{\pp}{\mathfrak p}
\newcommand{\DD}{\mathcal D}
\newcommand{\XX}{\mathcal X}
\newcommand{\YY}{\mathcal Y}
\newcommand{\CC}{\mathcal C}
\newcommand{\EE}{\mathcal E}
\newcommand{\MM}{\mathcal M}
\newcommand{\PPP}{\mathcal P}
\newcommand{\FF}{\mathcal F}
\newcommand{\wt}{\widetilde}
\newcommand{\rom}{\romannumeral}
\DeclareMathOperator{\aut}{Aut}
\DeclareMathOperator{\ide}{id}
\DeclareMathOperator{\ima}{Im}
\newtheorem{theorem}{Theorem}[section]
\newtheorem{lemma}[theorem]{Lemma}
\newtheorem{corollary}[theorem]{Corollary}
\newtheorem{proposition}[theorem]{Proposition}
\newtheorem{conjecture}[theorem]{Conjecture}
\newtheorem{remark}[theorem]{Remark}
\newtheorem{definition}[theorem]{Definition}
\newtheorem{convention}{Conventions}
\newtheorem{notation}[theorem]{Notation}
\newtheorem{nonumbering}{Theorem}
\newtheorem{nonumberingc}{Corollary}
\newtheorem{nonumberingt}{Acknowledgements}
\begin{document}
\author[Robert Laterveer]
{Robert Laterveer}

\address{Institut de Recherche Math\'ematique Avanc\'ee,
CNRS -- Universit\'e 
de Strasbourg,\
7 Rue Ren\'e Des\-car\-tes, 67084 Strasbourg CEDEX,
FRANCE.}
\email{robert.laterveer@math.unistra.fr}

\title[Algebraic cycles on certain HK fourfolds II]{Algebraic cycles on certain hyperk\"ahler fourfolds with an order $3$ non--symplectic automorphism II}

\begin{abstract} Let $X$ be a hyperk\"ahler variety, and assume that $X$ admits a non--symplectic automorphism $\sigma$ of order $k>{1\over 2}\dim X$. Bloch's conjecture predicts that the quotient
$X/<\sigma>$ should have trivial Chow group of $0$--cycles. We verify this for Fano varieties of lines on certain cubic fourfolds
having an order $3$ non--symplectic automorphism.
\end{abstract}

\keywords{Algebraic cycles, Chow group, motive, finite--dimensional motive, Bloch's conjecture, Bloch--Beilinson filtration, Beauville's ``splitting property'' conjecture, hyperk\"ahler varieties, Fano varieties of lines on cubic fourfolds, non--symplectic automorphism}
\subjclass[2010]{Primary 14C15, 14C25, 14C30.}

\maketitle

\section{Introduction}

Let $X$ be a smooth projective variety over $\C$, and let $A^i(X):=CH^i(X)_{\QQ}$ denote the Chow groups of $X$ (i.e. the groups of codimension $i$ algebraic cycles on $X$ with $\QQ$--coefficients, modulo rational equivalence). Let $A^i_{hom}(X)$ (and $A^i_{AJ}(X)$) denote the subgroup of homologically trivial (resp. Abel--Jacobi trivial) cycles.

The field of algebraic cycles is something of a mathematician's goldmine, with a wealth of open questions lying around for the picking.
 Many of these open questions are special cases or variants of Bloch's conjecture:
 
\begin{conjecture}[Bloch \cite{B}]\label{bloch1} Let $X$ be a smooth projective variety of dimension $n$. Let $\Gamma\in A^n(X\times X)$ be such that
  \[ \Gamma_\ast=0\colon\ \ \ H^i(X,\OO_X)\ \to\ H^i(X,\OO_X)\ \ \ \forall i>0\ .\]
  Then
  \[ \Gamma_\ast=0\colon\ \ \ A^n_{hom}(X)\ \to\ A^n(X)\ .\]
  \end{conjecture}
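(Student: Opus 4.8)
The plan is to prove Conjecture~\ref{bloch1} by lifting the question from cycles to Chow motives and combining Murre's conjectures with Kimura--O'Sullivan finite dimensionality; in this generality the argument is conditional, and becomes an unconditional theorem precisely for varieties $X$ whose motive is concrete enough --- such as the hyperk\"ahler fourfolds treated in this paper. First I would equip $X$ with a Chow--K\"unneth decomposition $h(X)=\bigoplus_{j=0}^{2n}h^j(X)$ satisfying Murre's conjectures, so that the descending filtration
\[ F^\nu A^n(X):=\ker\pi_{2n}\cap\cdots\cap\ker\pi_{2n-\nu+1} \]
on $A^n(X)$ is canonical, functorial for the action of correspondences, compatible with intersection products, with $F^1A^n(X)=A^n_{hom}(X)$ and $F^{n+1}A^n(X)=0$, and with $\gr^\nu_F A^n(X)$ depending only on the summand $h^{2n-\nu}(X)$. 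Since then $A^n_{hom}(X)=\bigoplus_{j=n}^{2n-1}A^n\big(h^j(X)\big)$, it suffices to prove that $\Gamma_\ast$ acts as zero on each graded piece $\gr^\nu_F A^n(X)$ for $1\le\nu\le n$.

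The second, conjectural, input is that $\gr^\nu_F A^n(X)$ is \emph{governed} by the quotient Hodge structure $H^{2n-\nu}(X,\QQ)/N^{n-\nu+1}$ --- the part of coniveau exactly $n-\nu$ --- in the sense that a self--correspondence acting as zero on this Hodge structure acts as zero on $\gr^\nu_F A^n(X)$. For the deepest layer $\nu=n$ this is the transcendental part of $H^n(X,\QQ)$, whose extreme Hodge component $H^{n,0}(X)=H^0(X,\Omega^n_X)$ is dual, by Serre duality, to $H^n(X,\OO_X)$; for $1\le\nu<n$ it has positive coniveau, so by a Bloch--Srinivas style decomposition of the diagonal the corresponding part of $A^n_{hom}(X)$ is the image, under pushforward, of the homologically trivial zero--cycles on a smooth projective $\widetilde{Z}$ of dimension $\nu<n$ mapping to $X$. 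One would then argue by induction on $\dim X$: read through the standard conjectures, the hypothesis $\Gamma_\ast=0$ on $H^i(X,\OO_X)$ for all $i>0$ transports along the Gysin maps to the statement that the relevant restrictions of $\Gamma$ kill the holomorphic forms of the $\widetilde{Z}$'s, so the inductive hypothesis disposes of the layers $\nu<n$; and the top layer $\nu=n$ is handled by finite dimensionality of the transcendental motive $t^n(X)$, on which $\Gamma$ acts homologically trivially, hence nilpotently, hence --- once $\Gamma$ has been arranged to be, or replaced on that motive by, an idempotent commuting with the Chow--K\"unneth projectors --- acts as zero. Summing the layers gives $\Gamma_\ast=0$ on $A^n_{hom}(X)$.

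The step I expect to be the genuine obstacle, and the reason the conjecture is still open, is that this chain leans on several statements which are themselves conjectural: the existence of a Bloch--Beilinson filtration with motivic graded pieces (Murre's conjectures), the finite dimensionality of $h(X)$ in the sense of Kimura--O'Sullivan, and the Lefschetz and Hodge standard conjectures, which would be needed to upgrade the hypothesis on the $H^i(X,\OO_X)$ to control over the \emph{interior} Hodge components (types $(p,q)$ with $0<p,q<\dim X$) of the transcendental cohomology. Absent that upgrade one cannot even conclude that $\Gamma$ acts homologically trivially on the transcendental motive, so finite dimensionality gains no traction; and even when it does, nilpotence of $\Gamma_\ast$ on $A^n_{hom}(X)$ only becomes \emph{vanishing} once $\Gamma$ may be replaced, on that motive, by an idempotent --- which is why the correspondence form of the conjecture is subtler than the case $\Gamma=\Delta_X$. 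A proof therefore goes through only when $X$ is rigid enough that $h(X)$ is provably finite dimensional --- typically because it lies in the tensor subcategory generated by abelian varieties --- and every transcendental Hodge class is killed by $\Gamma$; both hold for the Fano variety of lines on the special cubic fourfolds considered below, where the order $3$ non--symplectic automorphism $\sigma$ makes $H^2(X,\OO_X)$ and $H^4(X,\OO_X)$ the only nonzero $H^i(X,\OO_X)$ with $i>0$, there $\Gamma=\tfrac{1}{3}(1+\sigma+\sigma^2)$ acts through $\tfrac{1}{3}(1+\zeta+\zeta^2)=0$ with $\zeta$ a primitive cube root of unity, and any nonzero $\Gamma$--invariant transcendental class of type $(1,1)$ would span an algebraic sub-Hodge-structure, which cannot exist --- running the motivic argument above with these inputs in place is what the remainder of the paper carries out.
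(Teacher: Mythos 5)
The statement you were asked to prove is labelled as a \emph{conjecture} in the paper: it is Bloch's conjecture in its relative (correspondence) form, the paper offers no proof of it, and it records that even the absolute version is famously open for surfaces of general type. There is therefore no proof to compare yours against, and your decision to present a conditional strategy rather than a purported proof is the right call. Your outline --- a Chow--K\"unneth decomposition plus Murre's conjectures to reduce to the graded pieces of a Bloch--Beilinson filtration, a Bloch--Srinivas style decomposition to push the positive-coniveau layers down to lower-dimensional varieties, and Kimura finite-dimensionality plus nilpotence for the deepest layer --- is the standard folklore route, and you correctly locate the two genuine obstructions: the hypothesis only controls the Hodge components $H^{0,i}(X)$, so without input of generalized-Hodge-conjecture type one cannot even conclude that $\Gamma$ is homologically trivial on the transcendental motive; and nilpotence yields vanishing only after $\Gamma$ has been made compatible with the projectors.

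Two corrections to your closing paragraph, where you describe how the scheme specializes to the varieties of this paper. First, finite-dimensionality of $h(X)$ is \emph{not} known for all the families considered: it is proved only for the triple-cover family of theorem \ref{main0} and for the two Shioda-type subfamilies of theorem \ref{main2}; for general members of the two families of theorem \ref{main} the paper has no finite-dimensionality and instead runs Voisin's spread method (proposition \ref{voisin2}) on the universal family, which is exactly why it only controls $A^2_{(2)}(X)$ and $A^4_{(2)}(X)$ and leaves $A^4_{(4)}(X)$ open (remark \ref{problem}). Second, even where finite-dimensionality holds, the paper does not invoke a Bloch--Beilinson filtration: it substitutes the unconditional Fourier decomposition $A^\ast_{(\ast)}(X)$ of Shen--Vial, the refined projector $\pi^X_{2,0}$ of theorem \ref{pi20}, and proposition \ref{44}, precisely to sidestep the conjectural inputs your outline requires --- the parenthetical remark in the proof of theorem \ref{main0} about the difficulty with the projector $\pi^X_{4,0}$ is the concrete incarnation of the first obstruction you identified.
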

   
  \begin{conjecture}[Bloch \cite{B}]\label{bloch2} Let $X$ be a smooth projective variety of dimension $n$. Assume that
  \[  H^i(X,\OO_X)=0\ \ \ \forall i>0\ .\]
  Then
  \[  A^n_{}(X)\cong\QQ\ .\]
  \end{conjecture}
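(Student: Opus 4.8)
The plan is to deduce the statement from a decomposition of the diagonal. Observe first that the present statement is the special case $\Gamma=\Delta_X$ of the general form of Bloch's conjecture recalled above: the hypothesis makes the condition ``$(\Delta_X)_\ast=0$ on $H^i(X,\OO_X)$ for all $i>0$'' hold vacuously, since those target groups are zero; so a proof of the general conjecture would give $(\Delta_X)_\ast=0$ on $A^n_{hom}(X)$, that is $A^n_{hom}(X)=0$, which --- as the degree map $A^n(X)\to\QQ$ is surjective with kernel $A^n_{hom}(X)$ --- is equivalent to $A^n(X)\cong\QQ$. I will aim at the vanishing $A^n_{hom}(X)=0$ directly. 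Since the hypothesis $H^i(X,\OO_X)=0$, $i>0$, is preserved under extension of the base field, one may pass to a universal domain $\Omega\supseteq\C$ and prove the a priori stronger statement $A^n_{hom}(X_\Omega)=0$; this costs nothing and makes available the ``spreading out'' technique used below.

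Next I would translate the hypothesis into Hodge theory. One has $H^i(X,\OO_X)\cong H^{0,i}(X)$, and Hodge symmetry gives $h^{0,i}(X)=h^{i,0}(X)$; hence the hypothesis says exactly that $X$ carries no non-zero global holomorphic form of positive degree, i.e.\ $H^{i,0}(X)=0$ for all $i>0$. In particular $h^{2,0}(X)=0$, so $H^2(X,\C)=H^{1,1}(X)$ and therefore, by the Lefschetz $(1,1)$ theorem, $H^2(X,\QQ)$ is spanned by classes of algebraic cycles; more generally $H^{2j}(X,\C)$ has vanishing $(2j,0)$-component for every $j$.

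The core of the argument is to upgrade this cohomological triviality to the \emph{geometric} statement that $A^n(X_\Omega)$ is representable, i.e.\ supported on a closed subvariety of dimension $<n$ --- conjecturally, on a point. Granting such representability, the Bloch--Srinivas method and its refinements (Jannsen, Paranjape, and others) produce a decomposition of the diagonal $\Delta_X=\Gamma_0+\Gamma_1+\cdots+\Gamma_n$ in $A^n(X\times X)$ in which each $\Gamma_r$ is supported on a product of subvarieties of complementary, controlled dimensions. Letting this decomposition act on $A^n_{hom}(X_\Omega)$ and using the vanishing $H^{i,0}(X)=0$ to annihilate the $\Gamma_r$ one layer at a time --- each $\Gamma_r$ factors through the cohomology of a variety of dimension $<n$, whose relevant Hodge pieces are killed by the hypothesis --- shows that the identity $(\Delta_X)_\ast=\sum_r(\Gamma_r)_\ast$ acts as $0$ on $A^n_{hom}(X_\Omega)$, that is $A^n_{hom}(X_\Omega)=0$. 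Equivalently, one would reach the same conclusion from a Bloch--Beilinson filtration $F^\bullet A^n(X)$ whose graded pieces $\operatorname{Gr}^r_F$ are governed by the Hodge structure on $H^{2n-r}(X)$: the hypothesis makes every $\operatorname{Gr}^r_F$ vanish, hence the whole filtration, hence $A^n_{hom}(X)$.

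The main obstacle is precisely this upgrading step. At present there is no unconditional mechanism producing representability of $A^n(X)$ out of the cohomological vanishing $H^{i,0}(X)=0$: this implication is the converse of Mumford's theorem, and is in substance the conjecture itself. In practice one therefore argues case by case --- typically by first proving that the Chow motive $M(X)$ is finite-dimensional in the sense of Kimura--O'Sullivan and then exhibiting enough algebraic cycles on $X\times X$ (equivalently, enough structure on $M(X)$) to force a diagonal decomposition of the above kind. This is, in effect, the route pursued in the present paper for the varieties considered here, where the order-$3$ non-symplectic automorphism and the geometry of lines on the cubic fourfold supply the required correspondences; in the generality stated above, however, the step remains open.
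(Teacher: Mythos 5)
This statement is a conjecture in the paper, not a theorem: the paper offers no proof of it, only the observation that it is the special case $\Gamma=\Delta_X$ of Conjecture \ref{bloch1}. Your proposal performs exactly that reduction and then correctly and honestly records that the remaining step --- upgrading the vanishing $H^{i,0}(X)=0$ for $i>0$ to a suitable decomposition of the diagonal, i.e.\ the converse of Mumford's theorem --- is precisely the open content of the conjecture; so your account matches the paper's treatment and the state of the art, but it is (and does not claim to be) a proof.
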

  
 The ``absolute version'' (conjecture \ref{bloch2}) is obtained from the ``relative version'' (conjecture \ref{bloch1}) by taking $\Gamma$ to be the diagonal. Conjecture \ref{bloch2} is famously open for surfaces of general type (cf. \cite{PW}, \cite{V8}, \cite{Gul} for some recent progress).

Let us now suppose that $X$ is a hyperk\"ahler variety (i.e., a projective irreducible holomorphic symplectic manifold \cite{Beau0}, \cite{Beau1}), say of dimension $2m$. Suppose there exists a non--symplectic automorphism $\sigma\in\aut(X)$ of order $k>m$. This implies that
  \[    \bigl( \sigma+\sigma^2 + \ldots +\sigma^k\bigr){}_\ast=0\colon\ \ \ H^i(X,\OO_X)\ \to\ H^{i}(X,\OO_X)\ \ \ \forall i>0\ .\]
  
 Conjecture \ref{bloch1} (applied to the correspondence $\Gamma=\sum_{j=1}^k \Gamma_{\sigma^j}\in A^{2m}(X\times X)$, where $\Gamma_f$ denotes the graph of an automorphism $f\in\aut(X)$) then predicts the following:

\begin{conjecture}\label{conjhk} Let $X$ be a hyperk\"ahler variety of dimension $2m$. Let $\sigma\in\aut(X)$ be an order $k$ non--symplectic automorphism, and assume $k>m$. Then
  \[  \begin{split}   \bigl( \sigma+\sigma^2 + \ldots +\sigma^k\bigr){}_\ast&=0\colon\ \ \ A^{2m}_{hom}(X)\ \to\ A^{2m}(X)\ ;\\
            \bigl( \sigma+\sigma^2 + \ldots +\sigma^k\bigr){}_\ast&=0\colon\ \ \ A^{2}_{AJ}(X)\ \to\ A^{2m}(X)\ .\\ 
            \end{split} \]
 \end{conjecture}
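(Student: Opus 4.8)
Conjecture~\ref{conjhk} is open in general; the plan is to establish it in the cases relevant to this paper, namely $X=F(Y)$, the Fano variety of lines on a smooth cubic fourfold $Y$ admitting an order--$3$ automorphism $\tau\in\aut(Y)$ which induces a non--symplectic $\sigma:=F(\tau)\in\aut(X)$ (so $2m=4$, $k=3>m=2$). Recall $\tau$ is non--symplectic precisely when $\tau^\ast$ acts on $H^{3,1}(Y)$ by a primitive cube root of unity $\zeta$, which via Beauville--Donagi means $\sigma^\ast$ acts by $\zeta$ on $H^{2,0}(X)$. Since $\sigma^3=\ide$, the correspondence $p:=\tfrac13\sum_{j=1}^{3}\Gamma_{\sigma^j}\in A^4(X\times X)$ is idempotent; let $M$ be the direct summand of $h(X)$ it cuts out, so $H^\ast(M)=H^\ast(X)^\sigma$. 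The two assertions of Conjecture~\ref{conjhk} are then equivalent to $A^4_{hom}(M)=0$ and $A^2_{AJ}(M)=0$, i.e. (passing to a resolution $\wt Z$ of the quotient $X/\langle\sigma\rangle$) to $A^4_{hom}(\wt Z)=0$ and $A^2_{AJ}(\wt Z)=0$. Since $H^i(\wt Z,\OO_{\wt Z})=H^i(X,\OO_X)^\sigma$, which vanishes for $i>0$ ($\sigma^\ast$ acts by a non--trivial root of unity on the one--dimensional spaces $H^2(X,\OO_X)$ and $H^4(X,\OO_X)=\mathrm{Sym}^2H^2(X,\OO_X)$), these are instances of Conjecture~\ref{bloch2} and of its codimension--$2$ analogue.

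The first ingredient is finite--dimensionality of $h(X)$ in the sense of Kimura--O'Sullivan. For this I would use that $h(F(Y))$ lies in the pseudo--abelian subcategory generated by $h(Y)$ --- a consequence of Beauville--Donagi together with the detailed study of the Chow motive of $F(Y)$ (Shen--Vial's multiplicative Chow--K\"unneth and Fourier decomposition, and related work) --- so that finite--dimensionality of $h(X)$ reduces to that of $h(Y)$. For the cubics $Y$ in play this holds: e.g. when $Y=\{x_5^3=G_3(x_0,\dots,x_4)\}$ is a cyclic triple cover of $\PP^4$ branched along the cubic threefold $V=\{G_3=0\}$, the cyclic--cover decomposition of the $\ZZ/3$--equivariant motive $h(Y)$ expresses its transcendental part by Tate twists and summands of $h(V)$, and cubic threefolds have finite--dimensional motive (being rationally connected, their motive is built from Tate motives and the motive of a curve, by Bloch--Srinivas); the remaining families are treated identically, with $V$ replaced by the relevant auxiliary abelian variety, K3 surface, or cubic threefold.

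With finite--dimensionality in hand I would carry out a Bloch--type reduction on $M$. Write $M=M^{\mathrm{alg}}\oplus N$, where $M^{\mathrm{alg}}$ collects the Tate summands $\mathbf 1(-j)$ of $M$ --- accounting for $H^0$, $H^8$, the $\sigma$--invariant N\'eron--Severi classes in $H^2$ and $H^6$, and the algebraic part of $H^4(X)^\sigma$ --- while $N$ carries the rest of $H^4(X)^\sigma$, which is transcendental and concentrated in cohomological degree $4$. Tate motives contribute nothing to $A^4_{hom}$ or to $A^2_{AJ}$, so the problem reduces to $A^4_{hom}(N)=0$ and $A^2_{AJ}(N)=0$. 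Now $H^4(X,\QQ)=\mathrm{Sym}^2H^2(X,\QQ)$ for a variety of $K3^{[2]}$--type, and $H^2(X)^\sigma$ is entirely algebraic (a $\sigma$--invariant rational sub--Hodge structure of $H^2(X)_{tr}$ avoids $H^{2,0}(X)$, hence is of type $(1,1)$, hence lies in $\mathrm{NS}(X)\cap H^2(X)_{tr}=0$); therefore $H^4(N)$ is a sub--Hodge structure of $\mathrm{Sym}^2H^2(X)_{tr}\oplus\bigl(\mathrm{NS}(X)\otimes H^2(X)_{tr}\bigr)$ with vanishing $(4,0)$--part, so it has coniveau $\ge1$. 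Via Beauville--Donagi this sub--Hodge structure is cut out of $H^4_{\mathrm{prim}}(Y)$, whose transcendental part is of abelian or K3 type for the cubics considered, and one can produce an explicit algebraic correspondence from a surface (the associated K3 surface, or a component of $\mathrm{Fix}(\sigma)$) realising the coniveau bound. Bloch--Srinivas' decomposition of the diagonal, combined with Kimura's nilpotence theorem and an induction on dimension, then gives $A^4_{hom}(N)=0$ and $A^2_{AJ}(N)=0$: a finite--dimensional motive supported in codimension $\ge1$ and with no holomorphic $i$--forms for $i>0$ has trivial $A^{\mathrm{top}}_{hom}$ and trivial $A^2_{AJ}$. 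Finally, the exceptional locus of $\wt Z\to X/\langle\sigma\rangle$ contributes only Tate twists of the motives of the components of $\mathrm{Fix}(\sigma)$, disposed of in the same manner, reducing to their having trivial Chow group of $0$--cycles.

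The main obstacle is the effectivity hidden in the third paragraph: knowing that the invariant transcendental part of $H^4(X)$ is Hodge--theoretically of coniveau $\ge1$ is not enough --- one must exhibit an actual algebraic correspondence from a lower--dimensional variety cutting it out (equivalently, verify the pertinent case of the generalized Hodge conjecture for $X$). This is exactly where the concrete geometry of $F(Y)$ is essential: the isomorphism $H^2(X)_{tr}\cong H^4_{\mathrm{prim}}(Y)(1)$, together with the identification of $H^4_{\mathrm{prim}}(Y)$ for the special cubics with the primitive cohomology of an abelian variety or a K3 surface, is what makes the needed correspondences (and the finite--dimensionality input) available. A secondary, bookkeeping--type difficulty is the determination of the fixed locus $\mathrm{Fix}(\sigma)$ and the verification, family by family, that each of its components has vanishing $A_0$.
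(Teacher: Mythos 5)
You have misidentified what the paper actually establishes at this point. Conjecture~\ref{conjhk} is stated \emph{as a conjecture}; the paper does not prove it and does not claim to. The paper's only justification for it is the two-line remark surrounding the statement: (i) it is obtained by applying Bloch's Conjecture~\ref{bloch1} to the correspondence $\Gamma=\sum_{j=1}^{k}\Gamma_{\sigma^j}$, noting that the non-symplectic hypothesis with $k>m$ forces $\Gamma_\ast=0$ on all of $H^i(X,\OO_X)$ for $i>0$ (because $\sigma^\ast$ acts on $H^{2,0}$ by a primitive $k$th root of unity $\zeta$, hence on $H^{2i,0}=\mathrm{Sym}^i H^{2,0}$ by $\zeta^i\neq1$ for $0<i\leq m<k$); and (ii) the second displayed line follows from the first by the Bloch--Srinivas decomposition argument. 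Your proposal engages with neither of these two points, which are the entire content of the paper's treatment of the statement; in particular you never verify the hypothesis of Conjecture~\ref{bloch1} (the vanishing of $\Gamma_\ast$ on all $H^i(X,\OO_X)$, $i>0$, not just $i=2$), nor do you indicate why $A^2_{AJ}\to A^{2m}$ vanishes once $A^{2m}_{hom}\to A^{2m}$ does.

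What you have written instead is an outline of a strategy for proving the conjecture in the particular families of Fano fourfolds treated later in the paper (Theorems~\ref{main0}, \ref{main}, \ref{main2}), and here there is a substantive gap. Your plan leans entirely on Kimura finite-dimensionality of $h(X)$, claiming the ``remaining families are treated identically.'' But the paper makes clear that finite-dimensionality is \emph{not} known for the generic cubics of families (a) and (b) in Theorem~\ref{main}; that is precisely why the paper falls back on Voisin's spread technique there (Propositions~\ref{voisin1}--\ref{voisin2}) and is only able to prove the weaker statement on $A^i_{(2)}(X)$, with the action on $A^4_{(4)}(X)$ left open (Remark~\ref{problem}). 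Finite-dimensionality is available only in the special subfamilies of Theorem~\ref{main2} (via Shioda's inductive structure for Fermat-like cubics) and in Theorem~\ref{main0} (where $Y$ is a triple cover of $\PP^4$). Your sketch also omits the Shen--Vial Fourier decomposition $A^\ast_{(\ast)}(X)$ and the compatibility $\sigma_\ast A^i_{(j)}\subset A^i_{(j)}$ (Proposition~\ref{compat}), which the paper uses essentially to isolate and kill the various graded pieces; replacing this machinery by the informal ``Tate summands plus a transcendental piece $N$'' decomposition you describe would require constructing idempotents that you do not exhibit. You are right, however, that the crux is upgrading Hodge-theoretic coniveau of $H^4(X)^\sigma$ to an actual cycle-theoretic statement --- but the paper achieves this either through the explicit projector $\pi^X_{2,0}$ supported on a curve times a divisor (Theorem~\ref{pi20}) together with nilpotence, or through the spread argument, not through an appeal to the generalized Hodge conjecture for $Y$.
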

 
 (Here, the second statement follows from the first by applying the Bloch--Srinivas argument \cite{BS}.)
  
In \cite{nonsymp3}, this conjecture was verified for one family of hyperk\"ahler fourfolds, given by Fano varieties of lines on certain cubic fourfolds with an order $3$ non--symplectic automorphism. 
There exist three other families of cubic fourfolds with a polarized order $3$ automorphism inducing a non--symplectic automorphism on the Fano varieties (the four families are presented in \cite[Examples 6.4, 6.5, 6.6 and 6.7]{BCS}). The aim of the present note is to treat these $3$ remaining families.

 A first result is that conjecture \ref{conjhk} is true for one of the families:
 
  \begin{nonumbering}[=theorem \ref{main0}] Let $Y\subset\PP^5(\C)$ be a smooth cubic defined by an equation   
    \[  f(X_0,\ldots,X_4) + (X_5)^3=0\ ,\]
   where $f$ is homogeneous of degree $3$. Let $\sigma_Y\in\aut(Y)$ be the order $3$ automorphism induced by
    \[  \begin{split} \PP^5(\C)\ &\to\ \PP^5(\C)\ ,\\
                         [X_0:\ldots:X_5]\ &\mapsto\ [X_0:X_1:X_2:X_3: X_4:\nu X_5]\\
                         \end{split}\]
    (where $\nu$ is a $3$rd root of unity).

    Let $X=F(Y)$ be the Fano variety of lines in $Y$, and let $\sigma\in\aut(X)$ the non--symplectic automorphism induced by $\sigma_Y$.  
      Then
    \[ (\ide +\sigma+\sigma^2)_\ast \ A^4_{hom}(X)=0\ .\]
       \end{nonumbering}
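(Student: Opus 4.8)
The plan is to use finite-dimensionality of the Chow motive of $X$ to reduce the assertion to a cohomological computation carried out separately on the two non-trivial graded pieces of $A^4(X)$. Write $\Gamma:=\Delta_X+\Gamma_\sigma+\Gamma_{\sigma^2}\in A^4(X\times X)$, so that $\Gamma_\ast=(\ide+\sigma+\sigma^2)_\ast$ and $\tfrac13\Gamma$ is an idempotent correspondence (since $\sigma^3=\ide$). Invoking the Shen--Vial multiplicative Chow--K\"unneth decomposition for the Fano variety of lines on a cubic fourfold, one gets a $\sigma$-equivariant splitting of $A^4(X)$ into a ``degree'' part $\cong\QQ$ and two further pieces $CH_0(X)_{(2)}$, $CH_0(X)_{(4)}$, so that $A^4_{hom}(X)=CH_0(X)_{(2)}\oplus CH_0(X)_{(4)}$, and it is enough to kill these. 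The decisive input is that $h(X)$ is finite-dimensional, and this is where the shape of the equation enters: $Y$ is the cyclic triple cover of $\PP^4$ branched along the smooth cubic threefold $Z=\{f=0\}$, with $\sigma_Y$ generating the covering group. Splitting $h(Y)=h(Y)^{inv}\oplus h(Y)^{var}$ by $\tfrac13(\ide+\sigma_Y+\sigma_Y^2)$, the invariant part is $h(\PP^4)$, a sum of Tate motives; the variable part (equivalently, up to a twist, the transcendental motive $t^2(X)$) is, via the geometry of the cover and of the fixed locus $\operatorname{Fix}(\sigma)=F(Z)$, built out of the motives $h(Z)$ and $h(F(Z))$, which are of abelian type ($h^3(Z)\cong h^1(JZ)$, and the cohomology of $F(Z)$ is generated by $H^1(F(Z))=H^1(JZ)$). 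So $h(Y)$, and hence $h(X)=h(F(Y))$ — which by Shen--Vial is a direct summand of a finite sum of Tate twists of tensor powers of $h(Y)$ — is finite-dimensional.

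For the $(2)$-piece I would argue as follows. $CH_0(X)_{(2)}$ is governed $\sigma$-equivariantly by the transcendental motive $t^2(X)$, with cohomology the irreducible Hodge structure $H^2_{tr}(X)$. Since $\operatorname{End}_{HS}(H^2_{tr}(X))$ is a field and $\sigma$ is non-symplectic, $\sigma^\ast$ acts on $H^2_{tr}(X)$ as an element of exact order $3$ of that field, with minimal polynomial $t^2+t+1$ over $\QQ$; thus $\ide+\sigma^\ast+(\sigma^\ast)^2$ vanishes on $H^2_{tr}(X)$. So $\tfrac13\Gamma$ is an idempotent self-correspondence of $t^2(X)$ acting trivially on cohomology; being finite-dimensional, such a correspondence is nilpotent, and a nilpotent idempotent is $0$. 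Hence $\Gamma$ is the zero correspondence on $t^2(X)$, and in particular $\Gamma_\ast CH_0(X)_{(2)}=0$.

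The $(4)$-piece is the delicate one, and I expect it to be the main source of bookkeeping. Here $CH_0(X)_{(4)}$ is governed by $\operatorname{Sym}^2t^2(X)$, which by the motivic Beauville--Donagi isomorphism $t^2(X)\cong t^4(Y)(-1)$ equals $\operatorname{Sym}^2t^4(Y)(-2)$, with $\sigma$ acting through $\operatorname{Sym}^2$ of its action on $t^4(Y)$. The argument of the previous paragraph applies equally to $Y$: reading off the Griffiths/Jacobian-ring description of the cyclic cover, $\sigma_Y^\ast$ acts on $H^4_{prim}(Y)$ with only the eigenvalues $\nu,\bar\nu$ (the two non-trivial characters of the covering group), so $\ide+\sigma_Y^\ast+(\sigma_Y^\ast)^2=0$ on $H^4_{prim}(Y)$, in particular on $H^4_{tr}(Y)$; hence $\ide+\sigma_Y+\sigma_Y^2$ vanishes on $t^4(Y)$ as a correspondence. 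Equivalently $\sigma_Y$ acts on $t^4(Y)$ as a primitive cube root of unity, and after base change to $\QQ(\nu)$ the motive splits as $t^4(Y)_{\QQ(\nu)}=M_\nu\oplus M_{\bar\nu}$ with $\overline{M_\nu}=M_{\bar\nu}$. A direct computation then shows that $\ide+\sigma+\sigma^2$ acts on $\operatorname{Sym}^2t^4(Y)$ as three times the projector onto the summand $M_\nu\otimes M_{\bar\nu}$, so that $\Gamma_\ast$ on $CH_0(X)_{(4)}$ factors through $CH^4(N)$ where $N:=(M_\nu\otimes M_{\bar\nu})(-2)$. Its cohomology $H^4_\nu(Y)\otimes H^4_{\bar\nu}(Y)$ has Hodge level $\le 2$ in weight $8$ (the $(p,q)$-parts vanish for $p>5$, as $M_\nu$ and $M_{\bar\nu}$ already miss their extreme Hodge pieces), i.e. coniveau $\ge 3$; combining this with finite-dimensionality and with $CH_0(Y)=\QQ$ (true since $Y$ is rationally connected, and giving enough control on $CH^\ast(Y\times Y)$ in the relevant range) should force $CH^4(N)=0$, hence $\Gamma_\ast CH_0(X)_{(4)}=0$. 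Together with the $(2)$-piece this gives $(\ide+\sigma+\sigma^2)_\ast A^4_{hom}(X)=0$.

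The hard part is the first step, finite-dimensionality: for an arbitrary cubic fourfold it is unknown, and the whole argument rests on exploiting the cyclic-cover structure of $Y$ to reduce $h(Y)$, and with it $h(F(Y))$, to the abelian-type motives of $Z$ and $F(Z)$. Making this reduction precise — controlling the Tate-twist shifts that are intrinsic to a cyclic cover, and dealing with the singular intermediate varieties that arise (the locus of flex lines of $Z$, the quotient $X/\langle\sigma\rangle$, the fixed surface $F(Z)$) — is where the real effort goes; once finite-dimensionality is available, the cohomological eigenvalue computations and the Kimura nilpotence arguments above are comparatively routine.
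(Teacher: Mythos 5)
Your overall strategy coincides with the paper's: establish finite--dimensionality of $h(X)$, show $\sigma$ preserves the Shen--Vial grading, and then kill $(\ide+\sigma+\sigma^2)_\ast$ on each of $A^4_{(2)}(X)$ and $A^4_{(4)}(X)$ by a cohomological vanishing followed by Kimura nilpotence. Your finite--dimensionality sketch via the triple cover $Y\to\PP^4$ branched along the cubic threefold is a correct outline of the content of the reference \cite{fam}, which the paper cites directly (combined with \cite{fano} to pass from $Y$ to $F(Y)$), and your handling of the $(2)$--piece via the endomorphism field of $H^2_{tr}(X)$ and the transcendental motive is a valid, slightly more structural alternative to the paper's Lefschetz $(1,1)$--theorem argument. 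Note, however, that the assertion that the Shen--Vial splitting is $\sigma$--equivariant is itself something that must be proved -- the paper's Proposition \ref{compat} does this by verifying $(\sigma\times\sigma)^\ast L=L$ for the Fourier kernel $L$ -- and you take it for granted.

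The genuine gap is in your treatment of $A^4_{(4)}(X)$. The eigenvalue computation giving $t^4(Y)_{\QQ(\nu)}=M_\nu\oplus M_{\bar\nu}$ is correct, as is the observation that $\ide+\sigma+\sigma^2$ acts on $\operatorname{Sym}^2 t^4(Y)$ as three times the projector onto $M_\nu\otimes M_{\bar\nu}$ and that this piece has Hodge coniveau $\geq 3$ in weight $8$. But the inference that Hodge coniveau $\geq 3$, finite--dimensionality and $CH_0(Y)\cong\QQ$ ``should force'' $CH^4\bigl((M_\nu\otimes M_{\bar\nu})(-2)\bigr)=0$ does not follow from known results, and you rightly flag it as unresolved: converting Hodge coniveau into geometric coniveau (and from there into a Chow--theoretic vanishing via a Bloch--Srinivas argument) requires the generalized Hodge conjecture or an explicit geometric realization of the supporting subvariety, neither of which you supply, and finite--dimensionality by itself only yields nilpotence of homologically trivial correspondences. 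The paper circumvents this entirely by using as its input \cite[Proposition 3.8]{nonsymp3} (restated here as Proposition \ref{44}), which asserts directly that $\Delta_G\circ\Pi^X_4$ equals, in $H^8(X\times X)$, a cycle $R$ acting trivially on $A^4(X)$; this is then fed into the nilpotence machine exactly as for the $(2)$--piece. So your argument is structurally parallel to the paper's, but the decisive cohomological input for $A^4_{(4)}(X)$ is left unproved.
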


Theorem \ref{main0} is proven using the theory of finite--dimensional motives \cite{Kim}. The argument is similar to that of \cite{nonsymp3}.

For the two other families, we prove a weak version of conjecture \ref{conjhk}:

  \begin{nonumbering}[=theorem \ref{main}] Let $(Y,\sigma_Y)$ be one of the following:

  \noindent
  (a)
  $Y\subset\PP^5(\C)$ is a smooth cubic fourfold defined by an equation
    \[ f(X_0,X_1,X_2)+ g(X_3,X_4)+ (X_5)^3 + X_5\bigl( X_3\ell_1(X_0,X_1,X_2)+X_4\ell_2(X_0,X_1,X_2)\bigr)=0\ ,\]
    where $f,g$ are homogeneous polynomials of degree $3$ and $\ell_1,\ell_2$ are linear forms. $\sigma_Y\in\aut(Y)$ is the order $3$ automorphism induced by
    \[  \begin{split} \PP^5(\C)\ &\to\ \PP^5(\C)\ ,\\
                         [X_0:\ldots:X_5]\ &\mapsto\ [X_0:X_1:X_2:\nu X_3: \nu X_4:\nu^2 X_5]\\
                         \end{split}\]
    (where $\nu$ is a $3$rd root of unity). 
    
  \noindent
  (b)
    $Y\subset\PP^5(\C)$ is a smooth cubic fourfold defined by an equation
    \[ \begin{split} X_2 f(X_0,X_1)+ X_3 g(X_0,X_1)+ (X_4)^2\ell_1(X_0,X_1) + X_4X_5\ell_2(X_0,X_1)+&\\ (X_5)^2\ell_3(X_0,X_1)+ X_4 h(X_2,X_3) +X_5 k(X_2,X_3)&=0\ ,\\
    \end{split}\]
    where $f,g,h,k$ are homogeneous polynomials of degree $2$ and $\ell_1,\ell_2$ are linear forms. $\sigma_Y\in\aut(Y)$ is the order $3$ automorphism induced by
    \[  \begin{split} \PP^5(\C)\ &\to\ \PP^5(\C)\ ,\\
                         [X_0:\ldots:X_5]\ &\mapsto\ [X_0:X_1:\nu X_2:\nu X_3: \nu^2 X_4:\nu^2 X_5]\\
                         \end{split}\]
    (where $\nu$ is a $3$rd root of unity). 
    
  Let $X=F(Y)$ be the Fano variety of lines in $Y$, and let $\sigma\in\aut(X)$ the non--symplectic automorphism induced by $\sigma_Y$.  
      Then
       \[ \begin{split}  (\ide +\sigma+\sigma^2)_\ast \ A^2_{(2)}(X)&=0\ ,\\
                               (\ide +\sigma+\sigma^2)_\ast \ A^4_{(2)}(X)&=0\ .\\                     
                               \end{split}        \]
       \end{nonumbering}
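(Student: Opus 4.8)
The plan is to push the problem from the hyperk\"ahler fourfold $X=F(Y)$ down to the cubic $Y$, reduce there to a purely cohomological statement, and then upgrade that statement to Chow groups using that the motives that intervene are of abelian type.

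I would first invoke the Fourier decomposition of $A^\ast(X)$ constructed by Shen--Vial for Fano varieties of lines on cubic fourfolds, together with the Beauville--Donagi incidence correspondence $P\subset Y\times X$, which is equivariant for the pair $(\sigma_Y,\sigma)$. The point is that $P$ induces a $\langle\sigma\rangle$--equivariant isomorphism $A^2_{(2)}(X)\cong A^3_{hom}(Y)$, and that the Beauville--Bogomolov operator $L$ (which commutes with $\sigma$) induces a $\langle\sigma\rangle$--equivariant isomorphism $A^2_{(2)}(X)\cong A^4_{(2)}(X)$. Hence both identities to be proven follow from the single statement $(\ide+\sigma_Y+\sigma_Y^2)_\ast A^3_{hom}(Y)=0$. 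Writing $q\colon Y\to\bar Y:=Y/\langle\sigma_Y\rangle$ and using $q^\ast q_\ast=\ide+\sigma_Y+\sigma_Y^2$ on $A^\ast(Y)_{\QQ}$, this reduces further to
\[ A^3_{hom}(\bar Y)=CH_1(\bar Y)_{hom,\QQ}=0\ .\]

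The crux is the cohomological input: $H^4(Y,\QQ)^{\sigma_Y}$ should consist of algebraic classes only, equivalently $\bigl(H^4(Y,\QQ)_{tr}\bigr)^{\sigma_Y}=0$. I would establish this by computing the eigenvalues of $\sigma_Y^\ast$ on $H^4(Y)_{prim}$ through Griffiths' residue isomorphisms $H^{3-j,1+j}(Y)_{prim}\cong(R_F)_{3j}$ ($j=0,1,2$), where $R_F$ is the Jacobian ring of the defining polynomial $F$ and $\sigma_Y^\ast$ acts on a monomial through the evident character, corrected by the contributions of $\Omega$ and of $F$. Then $\sigma_Y^\ast$ acts on $H^{3,1}(Y)\cong(R_F)_0$ by a primitive cube root of unity (this is the non--symplecticity), so the whole task becomes to show that the $\sigma_Y$--invariant part of $H^{2,2}(Y)_{prim}\cong(R_F)_3$ is spanned by classes of algebraic cycles --- concretely, by $\sigma_Y$--invariant planes and by the surfaces swept out in $Y$ by the one--dimensional families of $\sigma_Y$--invariant lines. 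This bookkeeping with the weighted Jacobian ring, carried out separately for families (a) and (b), is where the special shape of the equations is really used; it is the main obstacle, and it simultaneously shows that $Y^{\sigma_Y}$ and $F(Y)^\sigma$ are unions of curves and isolated points, with no surface components carrying transcendental cohomology.

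Granting this, $H^\ast(Y,\QQ)^{\sigma_Y}$ is of Tate type in all degrees (the odd Betti numbers of $Y$ vanish, and $H^{2k}(Y)$ for $k\ne2$ is already of Tate type), and likewise $H^\ast(\bar Y,\QQ)$; in particular $H^3(\bar Y)=H^5(\bar Y)=0$. I would then run a $\sigma_Y$--equivariant Bloch--Srinivas decomposition of the diagonal of $Y$ (available as $Y$ is rationally connected) to see that the action of $(\ide+\sigma_Y+\sigma_Y^2)$ on $A^3_{hom}(Y)$ factors through the Chow group of a resolution of a $\sigma_Y$--invariant divisor; passing to a resolution $\tilde{\bar Y}$ of $\bar Y$, whose exceptional locus lies over the curves and points forming $Y^{\sigma_Y}$, the motive $\mathfrak h(\tilde{\bar Y})$ is assembled from Tate motives and Tate twists of the motives $\mathfrak h(C_i)$ of those fixed curves, hence is finite--dimensional and of abelian type. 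Since $J^5(\bar Y)=0$ kills all Abel--Jacobi invariants of $1$--cycles and the deeper shadow of $CH_1(\bar Y)_{hom}$ is governed by the Tate space $H^4(\bar Y)$, finite--dimensionality forces $CH_1(\bar Y)_{hom,\QQ}=0$ (the $1$--cycles witnessing the Jacobians of the fixed curves live on the exceptional divisors and die under pushforward to $\bar Y$). Transporting back along $P$ and $L$ gives the two desired identities. Besides the Griffiths computation, the point requiring care is that the isomorphisms of the first step carry $A^i_{(2)}(X)$ precisely onto $A^3_{hom}(Y)$, so that the vanishing descends and ascends faithfully.
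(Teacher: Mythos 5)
There is a genuine gap, and it sits precisely where the paper has to work hardest.

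Your reduction to the quotient cubic $\bar Y := Y/\langle\sigma_Y\rangle$ and the cohomological input $(H^4_{tr}(Y))^{\sigma_Y}=0$ are both fine (the latter is in fact immediate from non--symplecticity plus the Lefschetz $(1,1)$--theorem, via Beauville--Donagi, without any Jacobian--ring bookkeeping). The problem is the final step: you assert that the motive of a resolution $\tilde{\bar Y}$ is ``assembled from Tate motives and Tate twists of the motives of the fixed curves, hence finite--dimensional,'' and from there conclude $CH_1(\bar Y)_{hom,\QQ}=0$. That assertion is unjustified. Knowing that $H^\ast(\bar Y)$ is of Tate type does \emph{not} imply that the Chow motive $\mathfrak h(\bar Y)$ is a Tate motive --- that implication is exactly the Bloch conjecture you are trying to prove. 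A resolution $\tilde{\bar Y}$ does not have motive built only from Tate pieces and fixed curves; its motive still contains the $G$--invariant summand $\mathfrak h(Y)^G$, and for the two families of theorem \ref{main} neither $Y$ nor $X$ is known to have finite--dimensional motive for general members. This is precisely why the paper separates theorem \ref{main} (general member, no finite--dimensionality available) from theorem \ref{main2} (special subfamilies where finite--dimensionality \emph{is} known): the nilpotence/finite--dimensionality route that succeeds for theorem \ref{main0} and theorem \ref{main2} cannot be run here.

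The paper's way around this is Voisin's ``spread'' technique (lemma \ref{projbundle}, propositions \ref{voisin1} and \ref{voisin2}): one spreads the relative correspondence $\Delta_G^{\XX}\circ\Pi_2^{\XX}-\gamma$ over the family of $\sigma$--invariant cubics, transports it to $\YY\times_B\YY$ using the relative correspondences of corollary \ref{decomp}, and then uses proposition \ref{voisin2} to upgrade the fibrewise homological vanishing to a rational--equivalence statement up to cycles restricted from $\PP^5\times\PP^5$. This requires verifying the two hypotheses of proposition \ref{voisin2} (lemma \ref{OK}), which is where the specific shape of the equations and the weighted projective geometry of $\PP(1^3,3^3)$ resp.\ $\PP(1^2,3^4)$ enters, together with the lattice computation of \cite[Examples 6.6--6.7]{BCS}. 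Your proposal, as written, simply has no substitute for this step; it would prove the theorem only under a finite--dimensionality hypothesis that is not available in the generality claimed.
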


   Here, the notation $A^\ast_{(\ast)}(X)$ refers to the {\em Fourier decomposition\/} of the Chow ring of $X$ as constructed by Shen--Vial \cite{SV}, which plays an important role in this note. Theorem \ref{main} is proven by using Voisin's method of ``spread'', as developed in \cite{V0}, \cite{V1}. The argument is similar to that of \cite{LFu2} (which dealt with symplectic automorphisms on Fano varieties of cubic fourfolds), and that of \cite{inv}, \cite{inv2} (which dealt with anti--symplectic involutions on Fano varieties of cubic fourfolds). 
   
   I have not been able to prove the full conjecture \ref{conjhk} for the two families of theorem \ref{main}; the missing piece concerns the action on $A^4_{(4)}(X)$ (cf. remark \ref{problem} for discussion).

 As an immediate corollary, we find that Bloch's conjecture \ref{bloch2} is verified for the quotient of the Fano varieties in one family:
  
  \begin{nonumberingc}[=corollary \ref{triv}] Let $(X,\sigma)$ be as in theorem \ref{main0}, and let $Z:=X/<\sigma>$ be the quotient. Then
    \[  A^4_{hom}(Z)=0\ .\]
    \end{nonumberingc}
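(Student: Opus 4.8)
The plan is to obtain this as a purely formal consequence of theorem \ref{main0}: all the geometric work is in that theorem, and the passage to the quotient uses only the standard behaviour of Chow groups under a finite group quotient, with $\QQ$--coefficients. Write $\pi\colon X\to Z$ for the quotient morphism; it is finite of degree $3$, and since $\dim X=4$ we read $A^4(-)$ as $CH_0(-)_\QQ$, with homological triviality for a $0$--cycle meaning degree $0$. Because we work with rational coefficients there is a well--defined pullback $\pi^\ast\colon A^\ast(Z)\to A^\ast(X)$ satisfying $\pi_\ast\pi^\ast=3\cdot\ide$ on $A^\ast(Z)$ and $\pi^\ast\pi_\ast=\ide_\ast+\sigma_\ast+(\sigma^2)_\ast$ on $A^\ast(X)$; equivalently, $\pi^\ast$ identifies $A^\ast(Z)$ with the invariant subring $A^\ast(X)^{\langle\sigma\rangle}$, and in particular $\pi^\ast$ is injective.

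Given this, I would argue as follows. Take $b\in A^4_{hom}(Z)$ and set $a:=\pi^\ast b\in A^4(X)$. Since $\pi^\ast$ is compatible with the cycle class map, $a$ is homologically trivial, so $a\in A^4_{hom}(X)$. Since $\pi\circ\sigma=\pi$, the class $a$ is $\langle\sigma\rangle$--invariant: $\sigma^\ast a=\sigma^\ast\pi^\ast b=(\pi\circ\sigma)^\ast b=\pi^\ast b=a$, and likewise $(\sigma^2)^\ast a=a$. Hence
  \[ 3a\;=\;a+\sigma^\ast a+(\sigma^2)^\ast a\;=\;(\ide+\sigma+\sigma^2)^\ast a\;=\;(\ide+\sigma+\sigma^2)_\ast a\ , \]
the last equality merely recording that $\{\ide,\sigma^\ast,(\sigma^2)^\ast\}=\{\ide_\ast,\sigma_\ast,(\sigma^2)_\ast\}$ as operators on Chow groups, for the order $3$ automorphism $\sigma$. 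By theorem \ref{main0} the right--hand side is $0$, so $3a=0$, i.e.\ $a=0$ in $A^4(X)$. As $\pi^\ast$ is injective, $b=0$, which is the desired vanishing $A^4_{hom}(Z)=0$.

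I do not expect a genuine obstacle here --- the corollary is formal, exactly as the word ``immediate'' in the statement suggests. The one point deserving a word of care is the descent statement $\pi^\ast\colon A^\ast(Z)\xrightarrow{\ \sim\ }A^\ast(X)^{\langle\sigma\rangle}$, which is classical for a finite quotient of a smooth variety once the group order is inverted, and in particular is insensitive to whatever (quotient) singularities $Z$ may carry; an alternative is to phrase everything through $\pi_\ast$, which is unconditionally defined and surjective with $\QQ$--coefficients since $\pi_\ast[\pi^{-1}(z)]$ is a positive multiple of $[z]$ for each point $z\in Z$. Finally, to situate the corollary: because $\sigma$ is non--symplectic of order $3$ on the fourfold $X$, one has $H^i(X,\OO_X)^{\langle\sigma\rangle}=0$ for all $i>0$, whence $H^i(Z,\OO_Z)=0$ for $i>0$, so the corollary is precisely Bloch's conjecture \ref{bloch2} for the variety $Z$.
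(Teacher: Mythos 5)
Your proof is correct and is essentially the same argument the paper gives: the paper simply invokes the identification $A^4(Z)\cong A^4(X)^\sigma$ and theorem \ref{main0}, which you have unpacked into the explicit pull-back/push-forward manipulation showing that a $\sigma$--invariant homologically trivial $0$--cycle on $X$ is killed by $(\ide+\sigma+\sigma^2)_\ast=3\cdot\ide$. No gap; the argument is formal as you say.
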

    
  Another corollary is the following property of the Chow ring of these Fano varieties (reminiscent of the Chow ring of $K3$ surfaces \cite{BV}):
  
  \begin{nonumberingc}[=corollary \ref{ring}] Let $X$ and $\sigma$ be as in theorem \ref{main0} or theorem \ref{main}. Let $a\in A^3(X)$ be a $1$--cycle of the form
  \[ a=\displaystyle\sum_{i=1}^r b_i\cdot D_i\ \ \ \in A^3(X)\ ,\]
  where $b_i\in A^2(X)^\sigma$ and $D_i\in A^1(X)_{}$. Then $a$ is rationally trivial if and only if $a$ is homologically trivial.
    \end{nonumberingc}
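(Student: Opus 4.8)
The ``only if'' implication is trivial, so the task is to show that a cycle $a=\sum_{i=1}^r b_i\cdot D_i$ of the indicated form which is homologically trivial must already be rationally trivial. My plan is to run $a$ through the Shen--Vial Fourier decomposition of $A^\ast(X)$, exploiting the hypotheses $b_i\in A^2(X)^\sigma$ and $D_i\in A^1(X)$ together with theorems \ref{main0} and \ref{main}. Recall from \cite{SV} that for $X=F(Y)$ one has $A^1(X)=A^1_{(0)}(X)$, $A^2(X)=A^2_{(0)}(X)\oplus A^2_{(2)}(X)$ and $A^3(X)=A^3_{(0)}(X)\oplus A^3_{(2)}(X)$, and that this bigrading is compatible with the intersection product, so in particular $A^2_{(s)}(X)\cdot A^1(X)\subseteq A^3_{(s)}(X)$. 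Moreover the decomposition is $\sigma$--equivariant: since $\sigma$ is induced by a linear automorphism of $\PP^5$, it preserves the Pl\"ucker class $l\in A^1(X)$ and the incidence correspondence on $X$, hence it commutes with the class $L\in A^2(X\times X)$ and with the Chow--K\"unneth projectors that Shen--Vial build from $l$ and $L$ (this is the mechanism already used in \cite{LFu2}, \cite{inv}, \cite{inv2}). Writing $b_i=b_{i,0}+b_{i,2}$ with $b_{i,s}\in A^2_{(s)}(X)$, we therefore get $b_{i,0},\,b_{i,2}\in A^2(X)^\sigma$.

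The crucial step is to show $b_{i,2}=0$. Here $b_{i,2}\in A^2_{(2)}(X)^\sigma$, so $(\ide +\sigma+\sigma^2)_\ast b_{i,2}=3\,b_{i,2}$ (we work with $\QQ$--coefficients throughout). In the setting of theorem \ref{main} we are given $(\ide +\sigma+\sigma^2)_\ast A^2_{(2)}(X)=0$, so $b_{i,2}=0$ at once. In the setting of theorem \ref{main0} the same vanishing holds: applying the Bloch--Srinivas argument to the conclusion $(\ide +\sigma+\sigma^2)_\ast A^4_{hom}(X)=0$ of theorem \ref{main0} yields $(\ide +\sigma+\sigma^2)_\ast A^2_{AJ}(X)=0$, and $A^2_{AJ}(X)=A^2_{hom}(X)=A^2_{(2)}(X)$, the first equality because $H^3(X)=0$ for the hyperk\"ahler fourfold $X$ and the second by \cite{SV}. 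Hence $b_i=b_{i,0}\in A^2_{(0)}(X)$ for all $i$, and by multiplicativity $a=\sum_i b_i\cdot D_i\in A^3_{(0)}(X)$.

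It remains to use that $A^3_{(0)}(X)\cap A^3_{hom}(X)=0$, i.e. that the cycle class map is injective on $A^3_{(0)}(X)$; this is part of the Fourier-decomposition package of \cite{SV} for Fano varieties of cubic fourfolds (the graded pieces $A^i_{(s)}(X)$ split off a complement to $A^i_{hom}(X)$). Since $a$ is homologically trivial, it follows that $a=0$. I expect the two delicate points to be the $\sigma$--equivariance of the Fourier decomposition --- one must check that the distinguished correspondences entering the Shen--Vial construction are genuinely $(\sigma\times\sigma)$--invariant --- and, in the theorem \ref{main0} case, the bookkeeping needed to pass from the stated $A^4_{hom}$--vanishing to the $A^2_{(2)}$--vanishing via Bloch--Srinivas; once these are in place the rest is formal manipulation inside the Shen--Vial framework.
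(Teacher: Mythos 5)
Your proof follows essentially the same route as the paper's: $\sigma$-equivariance of the Fourier decomposition plus the theorems place $A^2(X)^\sigma$ inside $A^2_{(0)}(X)$; the product inclusion $A^2_{(0)}(X)\cdot A^1(X)\subset A^3_{(0)}(X)$ places $a\in A^3_{(0)}(X)$; and $A^3_{(0)}(X)$ injects into cohomology.

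One justification is off, however. You assert that the Shen--Vial bigrading is ``compatible with the intersection product'' and attribute this to \cite{SV}, but the bigraded ring structure is only proved in \cite{SV} for \emph{very general} $Y$ (theorem \ref{sv}; remark \ref{pity} flags exactly this caveat), and the cubics in theorems \ref{main0} and \ref{main} are special rather than general. The specific inclusion $A^2_{(0)}(X)\cdot A^1(X)\subset A^3_{(0)}(X)$ that you actually use \emph{is} true for all smooth cubic fourfolds, but this requires a dedicated reference: the paper cites \cite[Proposition A.7]{FLV}, which improves \cite[Proposition 22.7]{SV}. Two smaller inaccuracies: the equality $A^2_{hom}(X)=A^2_{(2)}(X)$ you invoke in the theorem-\ref{main0} case is not known (the ``troublesome part'' $A^2_{(0)}(X)\cap A^2_{hom}(X)$ could a priori be nonzero, cf.\ the proof of corollary \ref{cheat}); fortunately your Bloch--Srinivas step only needs the inclusion $A^2_{(2)}(X)\subset A^2_{AJ}(X)$, which does hold. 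And the injectivity of the cycle class map on $A^3_{(0)}(X)$, while correct and due to \cite{SV}, is not a formal feature of the decomposition one can wave at; the paper recalls a short argument using $\FF\circ\FF=\tfrac{25}{2}\,\ide$ on this piece.
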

  

 \vskip0.6cm

\begin{convention} In this article, the word {\sl variety\/} will refer to a reduced irreducible scheme of finite type over $\C$. A {\sl subvariety\/} is a (possibly reducible) reduced subscheme which is equidimensional. 

{\bf All Chow groups will be with rational coefficients}: we will denote by $A_j(X)$ the Chow group of $j$--dimensional cycles on $X$ with $\QQ$--coefficients; for $X$ smooth of dimension $n$ the notations $A_j(X)$ and $A^{n-j}(X)$ are used interchangeably. 

The notations $A^j_{hom}(X)$, $A^j_{AJ}(X)$ will be used to indicate the subgroups of homologically trivial, resp. Abel--Jacobi trivial cycles.
For a morphism $f\colon X\to Y$, we will write $\Gamma_f\in A_\ast(X\times Y)$ for the graph of $f$.
The contravariant category of Chow motives (i.e., pure motives with respect to rational equivalence as in \cite{Sc}, \cite{MNP}) will be denoted $\MM_{\rm rat}$.



We will write $H^j(X)$ 
to indicate singular cohomology $H^j(X,\QQ)$.

Given an automorphism $\sigma\in\aut(X)$, we will write $A^j(X)^\sigma$ (and $H^j(X)^\sigma$) for the subgroup of $A^j(X)$ (resp. $H^j(X)$) invariant under 
$\sigma$.
\end{convention}

\section{Preliminaries}

\subsection{Quotient varieties}
\label{ssquot}

\begin{definition} A {\em projective quotient variety\/} is a variety
  \[ Z=X/G\ ,\]
  where $X$ is a smooth projective variety and $G\subset\hbox{Aut}(X)$ is a finite group.
  \end{definition}
  
 \begin{proposition}[Fulton \cite{F}]\label{quot} Let $Z$ be a projective quotient variety of dimension $n$. Let $A^\ast(Z)$ denote the operational Chow cohomology ring. The natural map
   \[ A^i(Z)\ \to\ A_{n-i}(Z) \]
   is an isomorphism for all $i$.
   \end{proposition}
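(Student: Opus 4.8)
The plan is to reduce the statement to the standard smooth case by descent along the quotient morphism $\pi\colon X\to Z=X/G$, where $X$ is smooth of pure dimension $n$ and $d:=|G|$; the hypothesis that all Chow groups carry $\QQ$--coefficients will be used in an essential way. Recall that the ``natural map'' of the statement is cap product with the fundamental class, $c\mapsto c\cap[Z]$, and that for the smooth variety $X$ cap product with $[X]$ is already an isomorphism $A^i(X)\xrightarrow{\sim}A_{n-i}(X)$ for all $i$ (operational Chow cohomology agrees with Chow homology on smooth varieties; see \cite[\S 17]{F}).

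First I would record two descent facts. Since $\pi$ is a finite surjective morphism and $Z=X/G$ is normal (a quotient of a smooth variety by a finite group), proper push--forward induces, after $\otimes\QQ$, an isomorphism $\pi_\ast\colon A_\ast(X)^G\xrightarrow{\sim}A_\ast(Z)$ onto the $G$--invariant subgroup \cite[Example 1.7.6]{F}. Likewise, contravariant functoriality of operational Chow cohomology gives a ring homomorphism $\pi^\ast\colon A^\ast(Z)\to A^\ast(X)$ whose image is contained in $A^\ast(X)^G$ (because $\pi\circ g=\pi$ for all $g\in G$), and the same averaging argument as in the homological case --- now using a transfer for the proper surjection $\pi$ --- shows that $\pi^\ast$ is an isomorphism onto $A^\ast(X)^G$ (cf.\ \cite[\S 17]{F}).

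Next I would observe that the isomorphism $\cap[X]\colon A^i(X)\xrightarrow{\sim}A_{n-i}(X)$ is $G$--equivariant: each $g\in G$ satisfies $g_\ast[X]=[X]$, so the projection formula $g_\ast(g^\ast c\cap\xi)=c\cap g_\ast\xi$ shows that $\cap[X]$ intertwines the action of $g$ on $A^\ast(X)$ with that on $A_\ast(X)$. Restricting to invariants and splicing in the two descent isomorphisms yields a composite isomorphism
\[ A^i(Z)\xrightarrow{\,\pi^\ast\,} A^i(X)^G\xrightarrow{\,\cap[X]\,} A_{n-i}(X)^G\xrightarrow{\,\pi_\ast\,} A_{n-i}(Z)\ . \]
Computing this composite via the projection formula and the equality $\pi_\ast[X]=d\,[Z]$ (valid since $\pi$ has degree $d$) gives
\[ \pi_\ast\bigl((\pi^\ast c)\cap[X]\bigr)=c\cap\pi_\ast[X]=d\,\bigl(c\cap[Z]\bigr)\ , \]
so the composite equals $d$ times the natural map $c\mapsto c\cap[Z]$. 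Since $d$ is invertible in $\QQ$ and the composite is an isomorphism, the natural map is an isomorphism as well, which is the assertion.

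I expect the main obstacle to be the two descent statements, and in particular the one for operational (bivariant) cohomology: one has to set up the transfer for the finite --- possibly non--flat --- surjection $\pi$ and check that $A^\ast(Z)_\QQ$ is genuinely the ring of $G$--invariants in $A^\ast(X)_\QQ$. This is precisely the point at which rational coefficients are indispensable; granting it, the remainder is formal naturality of cap products together with Poincar\'e duality on the smooth variety $X$.
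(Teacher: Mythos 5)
The paper's own ``proof'' is a one--line citation to Example 17.4.10 of Fulton's book; no argument is reproduced. You have instead attempted to reconstruct a proof, and the strategy you adopt --- descend Poincar\'e duality from the smooth Galois cover $\pi\colon X\to Z=X/G$ via the projection formula $\pi_\ast(\pi^\ast c\cap[X])=c\cap\pi_\ast[X]=d\,(c\cap[Z])$ --- is the natural one. The Chow--homology descent $\pi_\ast\colon A_\ast(X)^G_\QQ\xrightarrow{\,\sim\,}A_\ast(Z)_\QQ$, the $G$--equivariance of $\cap[X]$ via the projection formula, and the final formal deduction (a composite of three isomorphisms equal to $d$ times the duality map forces the duality map to be an isomorphism) are all in order.

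The genuine gap is the other descent statement: that $\pi^\ast\colon A^\ast(Z)_\QQ\to A^\ast(X)^G_\QQ$ is an isomorphism, which you treat as a routine ``averaging argument using a transfer for the proper surjection $\pi$.'' Operational Chow cohomology is a contravariant bivariant theory with no built--in wrong--way map along a proper morphism; a transfer $\pi_\ast\colon A^\ast(X)\to A^\ast(Z)$ satisfying $\pi_\ast\pi^\ast=d\cdot\ide$ is available when $\pi$ is finite and \emph{flat} (one composes the flat pullback and proper pushforward on each $A_\ast(-)$), but the quotient map $X\to X/G$ need not be flat when $Z$ is singular, so the averaging construction does not directly make sense. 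Injectivity of $\pi^\ast$ can in fact be extracted without a transfer, by base--changing $\pi$ along an arbitrary $Z'\to Z$ and using surjectivity over $\QQ$ of the resulting finite pushforward on Chow homology; but surjectivity of $\pi^\ast$ onto $A^\ast(X)^G_\QQ$ --- which is exactly what your argument needs in order to conclude surjectivity of $\cap[Z]$ --- is the nontrivial content and would have to be established, either by an ad hoc construction of the transfer on $G$--invariants or by invoking a descent theorem for operational Chow groups (e.g.\ of Vistoli type). As written, your proof sets up the correct reduction but asserts the step that carries the whole weight, and this is precisely the step that Fulton's Example 17.4.10 is being cited for.
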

   
   \begin{proof} This is \cite[Example 17.4.10]{F}.
      \end{proof}

\begin{remark} It follows from proposition \ref{quot} that the formalism of correspondences goes through unchanged for projective quotient varieties (this is also noted in \cite[Example 16.1.13]{F}). We may thus consider motives $(Z,p,0)\in\MM_{\rm rat}$, where $Z$ is a projective quotient variety and $p\in A^n(Z\times Z)$ is a projector. For a projective quotient variety $Z=X/G$, one readily proves (using Manin's identity principle) that there is an isomorphism
  \[  h(Z)\cong h(X)^G:=(X,\Delta_G,0)\ \ \ \hbox{in}\ \MM_{\rm rat}\ ,\]
  where $\Delta_G$ denotes the idempotent ${1\over \vert G\vert}{\sum_{g\in G}}\Gamma_g$.  
  \end{remark}

\subsection{Finite--dimensional motives}

We refer to \cite{Kim}, \cite{An}, \cite{J4}, \cite{MNP} for the definition of finite--dimensional motive. 
An essential property of varieties with finite--dimensional motive is embodied by the nilpotence theorem:

\begin{theorem}[Kimura \cite{Kim}]\label{nilp} Let $X$ be a smooth projective variety of dimension $n$ with finite--dimensional motive. Let $\Gamma\in A^n(X\times X)_{}$ be a correspondence which is numerically trivial. Then there is $N\in\NN$ such that
     \[ \Gamma^{\circ N}=0\ \ \ \ \in A^n(X\times X)_{}\ .\]
\end{theorem}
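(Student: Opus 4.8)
The plan is to deduce this from the abstract statement of which it is an instance: \emph{in $\MM_{\mathrm{rat}}$, any numerically trivial endomorphism of a finite--dimensional motive is nilpotent}. Indeed, $\Gamma$ is an endomorphism of $h(X)=(X,\Delta_X,0)$, which is finite--dimensional by hypothesis. Two standard facts will be used throughout: (a) the numerically trivial correspondences form a $\otimes$--ideal, so that any composition $\Gamma_1\circ\Gamma\circ\Gamma_2$ of arbitrary correspondences with a numerically trivial $\Gamma$ is again numerically trivial; and (b) the categorical trace $\operatorname{tr}\colon \mathrm{End}_{\MM_{\mathrm{rat}}}(N)\to\mathrm{End}_{\MM_{\mathrm{rat}}}(\mathbf 1)=\QQ$ is a numerical invariant --- for a self--correspondence on $X$ it is the intersection number $\deg(\Gamma\cdot\Delta_X)$, hence depends only on the numerical equivalence class.

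The heart of the matter is to exploit the splitting $N\cong N^+\oplus N^-$ afforded by finite--dimensionality, with $N^+$ evenly finite--dimensional ($\wedge^{d+1}N^+=0$ for some $d$) and $N^-$ oddly finite--dimensional ($\operatorname{Sym}^{e+1}N^-=0$ for some $e$). First consider an endomorphism $f$ of an evenly finite--dimensional motive $P$ with $\wedge^{d+1}P=0$. I would invoke the categorical Cayley--Hamilton identity: $f$ satisfies the monic relation
\[ f^{\circ(d+1)}+\sum_{k=1}^{d+1}(-1)^k\,\lambda^k(f)\,f^{\circ(d+1-k)}=0\ ,\]
where $\lambda^k(f):=\operatorname{tr}(\wedge^k f)\in\mathrm{End}(\mathbf 1)=\QQ$, $\wedge^k f$ being the induced endomorphism of $\wedge^k P$ (which vanishes for $k>d$). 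These coefficients satisfy the Newton identities --- provable at the categorical level, e.g. via the splitting principle --- so each $\lambda^k(f)$ is a $\QQ$--polynomial \emph{without constant term} in the power sums $p_j(f):=\operatorname{tr}(f^{\circ j})$, $1\le j\le k$. If $f$ is numerically trivial, then by (a) each $f^{\circ j}$ is numerically trivial and by (b) $p_j(f)=0$ for all $j\ge 1$; hence $\lambda^k(f)=0$ for all $k\ge 1$, the Cayley--Hamilton relation collapses to $f^{\circ(d+1)}=0$, and $f$ is nilpotent with index $\le d+1$ --- \emph{a bound independent of $f$}.

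For an oddly finite--dimensional motive $P$ with $\operatorname{Sym}^{e+1}P=0$ one runs exactly the same argument with $\operatorname{Sym}$ in place of $\wedge$ (the sign conventions of the symmetry constraint are arranged precisely so that the ``symmetric'' Cayley--Hamilton identity holds, with coefficients $\sigma^k(f):=\operatorname{tr}(\operatorname{Sym}^k f)$, again $\QQ$--polynomials without constant term in the $p_j(f)$); so a numerically trivial $f$ satisfies $f^{\circ(e+1)}=0$. Finally, for $N=N^+\oplus N^-$ in general, write a numerically trivial $\Gamma\in\mathrm{End}(N)$ as a $2\times 2$ matrix $\begin{pmatrix} a & b\\ c & d\end{pmatrix}$ relative to this splitting. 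By (a), each of $a,b,c,d$ --- and more generally each ``syllable'' $b\circ d^{\circ l}\circ c\in\mathrm{End}(N^+)$ and $c\circ a^{\circ l}\circ b\in\mathrm{End}(N^-)$ --- is numerically trivial, hence nilpotent with index $\le\max(d+1,e+1)$ by the two cases just treated. A Nagata--Higman type argument (a non--unital $\QQ$--algebra in which every element is nilpotent of bounded index is nilpotent as an algebra) then bounds, in terms of $d$ and $e$ only, the length of any word in $a,b,c,d$ that can be nonzero, giving $\Gamma^{\circ N}=0$ for $N\gg 0$.

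The main obstacle is the categorical Cayley--Hamilton identity and its symmetric counterpart: producing a monic polynomial relation for an arbitrary endomorphism $f$ of $P$ out of the sole hypothesis $\wedge^{d+1}P=0$ (resp. $\operatorname{Sym}^{e+1}P=0$), together with the verification that the classical Newton identities relating the $\lambda^k$ (resp. $\sigma^k$) to the power sums persist for these categorically defined quantities. Everything else --- the $\otimes$--ideal property of numerical equivalence, the numerical invariance of the trace, and the combinatorial assembly from the $N^+\oplus N^-$ decomposition --- is routine by comparison.
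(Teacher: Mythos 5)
The paper offers no proof of this statement; it is an attribution to Kimura (citing \cite{Kim}), so there is no ``paper's own proof'' to compare against. Your reconstruction does follow the standard route of Kimura's original argument, and the outline is sound.

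Two small cautions. First, your phrasing ``each of $a,b,c,d$ \dots\ is \dots\ nilpotent with index $\le\max(d+1,e+1)$'' is not literally meaningful for the off--diagonal blocks $b\colon N^-\to N^+$ and $c\colon N^+\to N^-$, since these are not endomorphisms; only $a$, $d$, and the syllables you form from them are endomorphisms of $N^+$ or $N^-$, and those are indeed the objects to which the bounded--index nilpotence applies. Second, the Nagata--Higman step needs a little more care than ``the algebra generated by $a,b,c,d$'': the correct application is to the two--sided ideal $\mathcal I^\pm\subset\mathrm{End}(N^\pm)$ of numerically trivial endomorphisms, which is nil of bounded index by the even/odd cases and hence nilpotent by Nagata--Higman; one then bounds the number of $\mathcal I^+$--factors (syllables $b\circ d^{\circ l}\circ c$ and nontrivial powers of $a$) appearing in any nonzero word contributing to an entry of $\Gamma^{\circ K}$, using that the exponent $l$ in $d^{\circ l}$ is itself bounded by the nilpotence index of $d$ so that syllable lengths are uniformly bounded. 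This gives a lower bound on the number of $\mathcal I^+$--factors that grows linearly in $K$, which forces vanishing once $K$ exceeds an explicit threshold. Finally, the ``main obstacle'' you flag --- the categorical Cayley--Hamilton identity together with the Newton identities for $\lambda^k$ (resp.\ $\sigma^k$) in terms of the power sums $p_j$, valid in a $\QQ$--linear rigid $\otimes$--category --- is a genuine piece of tensor--categorical machinery that you should cite rather than gesture at (it can be found, in the generality needed, in Andr\'e--Kahn's work on nilpotence and monoidal structures, or in Deligne's treatment of tensor categories); modulo that citation, the argument is complete.
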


 Actually, the nilpotence property (for all powers of $X$) could serve as an alternative definition of finite--dimensional motive, as shown by Jannsen \cite[Corollary 3.9]{J4}.
Conjecturally, any variety has finite--dimensional motive \cite{Kim}. We are still far from knowing this, but at least there are quite a few non--trivial examples.

 \subsection{Spread}

  \begin{lemma}[Voisin \cite{V0}, \cite{V1}]\label{projbundle} Let $M$ be a smooth projective variety of dimension $n+1$, and $L$ a very ample line bundle on $M$. Let 
    \[ \pi\colon \YY\to B\]
    denote a family of hypersurfaces, where $B\subset\vert L\vert$ is a Zariski open.
      Let
   \[   p\colon \wt{\YY\times_B \YY}\ \to\ \YY\times_B \YY\]
   denote the blow--up of the relative diagonal. 
 Then $\wt{\YY\times_B \YY}$ is Zariski open in $V$, where $V$ is a projective bundle over $\wt{M\times M}$, the blow--up of $M\times M$ along the diagonal.
   \end{lemma}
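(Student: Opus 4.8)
The plan is to follow Voisin's ``spreading out the incidence relation'' argument. Set $W:=H^0(M,L)$, so that $\vert L\vert=\PP(W)$, and let $\mathcal H:=\{(x,[s])\in M\times\PP(W)\colon s(x)=0\}$ be the universal hypersurface; then $\YY$ is the restriction of $\mathcal H$ over $B\subset\PP(W)$, and $\YY\times_B\YY$ is the restriction over $B$ of $\mathcal H\times_{\PP(W)}\mathcal H=\{(x,y,[s])\colon s(x)=s(y)=0\}$. Since blowing up a subvariety commutes with restriction to a Zariski open, it is enough to analyze the blow--up of the relative diagonal in $\mathcal H\times_{\PP(W)}\mathcal H$ and then restrict the conclusion over $B$. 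On $M\times M$ (with projections $\mathrm{pr}_1,\mathrm{pr}_2\colon M\times M\to M$) consider the bundle map
\[ \phi\colon\ W\otimes\OO_{M\times M}\ \longrightarrow\ \mathcal E:=\mathrm{pr}_1^\ast L\oplus \mathrm{pr}_2^\ast L\ ,\qquad s\ \longmapsto\ \bigl(s(x),s(y)\bigr)\ . \]
Because $L$ is very ample it separates points, so $\phi$ is surjective over $(M\times M)\setminus\Delta_M$, and there $\mathcal H\times_{\PP(W)}\mathcal H$ is the projective bundle $\PP(\ker\phi)$, of relative dimension $\dim\vert L\vert-2$. The only obstruction to globalizing this is that $\phi$ drops to rank one along $\Delta_M$, so the first step is to pass to the blow--up $\beta\colon\wt{M\times M}\to M\times M$ along $\Delta_M$, with exceptional divisor $E\cong\PP(T_M)$.

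The second and main step is a local computation near $E$. In a chart of the blow--up in which $\Delta_M=\{y-x=0\}$ and the exceptional parameter is $t$, one has $y-x=t\cdot v$ for a local frame $v$ of $T_M$ along $E$, and Taylor expansion gives $s(y)=s(x)+t\,ds\vert_x(v)+O(t^2)$ whenever $s(x)=0$. Replacing the frame $(e_1,e_2)$ of $\beta^\ast\mathcal E$ by $(e_1+e_2,\,t e_2)$ produces a rank--$2$ vector bundle $\mathcal Q$ on $\wt{M\times M}$ (an elementary modification of $\beta^\ast\mathcal E$ along $E$) through which $\beta^\ast\phi$ factors, and the induced map $W\otimes\OO_{\wt{M\times M}}\to\mathcal Q$ restricts on $E$ to $s\mapsto\bigl(s(x),ds\vert_x(v)\bigr)$. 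Here very ampleness is used a second time: since $H^0(M,L)\to L\otimes\OO_{2x}$ is onto (separation of tangent vectors), for every $(x,[v])\in E$ the map $s\mapsto\bigl(s(x),ds\vert_x(v)\bigr)$ is surjective onto $L_x\oplus L_x$; hence $W\otimes\OO_{\wt{M\times M}}\to\mathcal Q$ is a surjection of vector bundles. Set $V:=\PP\bigl(\ker(W\otimes\OO_{\wt{M\times M}}\to\mathcal Q)\bigr)$; this is a projective bundle over $\wt{M\times M}$ of relative dimension $\dim\vert L\vert-2$, restricting to $\PP(\ker\phi)$ over $(M\times M)\setminus\Delta_M$.

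The third step is to build the comparison morphism and show it is an open immersion. The natural projection $q\colon\YY\times_B\YY\to M\times M$ satisfies $q^{-1}(\Delta_M)=\Delta_{\YY}$ as subschemes (both are cut out by $y-x=0$), so the preimage of $\Delta_M$ in $\wt{\YY\times_B\YY}$ is the exceptional divisor of $p$, hence Cartier; by the universal property of blow--ups $q$ lifts to $\wt q\colon\wt{\YY\times_B\YY}\to\wt{M\times M}$, and together with $\wt{\YY\times_B\YY}\xrightarrow{\,p\,}\YY\times_B\YY\to B\subset\PP(W)$ this gives a morphism $\wt{\YY\times_B\YY}\to\wt{M\times M}\times\PP(W)$. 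Over $(M\times M)\setminus\Delta_M$ it is the inclusion of the $B$--part of $\PP(\ker\phi)$; over $E$ one computes that the normal bundle of $\Delta_{\YY}$ in $\YY\times_B\YY$ at $(x,x,[s])$ is $\{w\in T_xM\colon ds\vert_x(w)=0\}$, so a point of the exceptional divisor of $\wt{\YY\times_B\YY}$ is exactly a datum $(x,[v],[s])$ with $s(x)=0$ and $ds\vert_x(v)=0$, i.e. a point of $V$ lying over $E$. Thus the morphism lands in $V$ and is a bijection onto the Zariski open $\{[s]\in B\}\subset V$; being moreover an isomorphism over the dense open $(M\times M)\setminus\Delta_M$ and having smooth source, it is an isomorphism onto that open subset by Zariski's main theorem. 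This identifies $\wt{\YY\times_B\YY}$ with a Zariski open in the projective bundle $V$ over $\wt{M\times M}$.

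I expect the main obstacle to lie in the second and third steps: correctly identifying the limiting incidence condition along $E$ as the first--order tangency $ds\vert_x(v)=0$, verifying that the elementary modification $\mathcal Q$ is genuinely locally free and that the residual map $W\otimes\OO_{\wt{M\times M}}\to\mathcal Q$ stays surjective on $E$ (this is where the full strength of very ampleness, rather than mere base--point freeness, is needed), and matching the exceptional divisor of the blow--up of $\Delta_{\YY}$ with the restriction $V\vert_E$.
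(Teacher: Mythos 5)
Your proof takes essentially the same approach as the paper, which itself only gives a pointer to \cite[Proof of Proposition 3.13]{V0} and \cite[Lemma 1.3]{V1}: the projective bundle $V$ is the incidence variety $\{((x,y,z),\sigma)\mid\sigma\vert_z=0\}\subset\wt{M\times M}\times\vert L\vert$, and very ampleness (separation of points off $E$, of tangent directions on $E$) is exactly what makes the evaluation map onto your elementary modification $\mathcal Q$ surjective. You have correctly reconstructed Voisin's argument, just with the bundle-theoretic details spelled out.
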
 
  
  \begin{proof} This is \cite[Proof of Proposition 3.13]{V0} or \cite[Lemma 1.3]{V1}. The idea is to define $V$ as
   \[  V:=\Bigl\{ \bigl((x,y,z),\sigma\bigr) \ \vert\ \sigma\vert_z=0\Bigr\}\ \ \subset\ \wt{M\times M}\times \vert L\vert\ .\]
   The very ampleness assumption ensures $V\to\wt{M\times M}$ is a projective bundle.
    \end{proof}

  This is used in the following key proposition: 
   
     \begin{proposition}[Voisin \cite{V1}]\label{voisin1} Assumptions as in lemma \ref{projbundle}. Assume moreover $M$ has trivial Chow groups. Let $R\in A^n(V)_{}$. Suppose that for all $b\in B$ one has
    \[ H^n(Y_b)_{prim}\not=0\ \ \ \ 
  \hbox{and}\ \ \ \ 
     R\vert_{\wt{Y_b\times Y_b}}=0\ \ \in H^{2n}(\wt{Y_b\times Y_b})\ .\]
   Then there exists $\delta\in A^n(M\times M)_{}$ such that
    \[     (p_b)_\ast \bigl(R\vert_{\wt{Y_b\times Y_b}}\bigr)= \delta\vert_{Y_b\times Y_b}  \ \ \in A^{n}({Y_b\times Y_b})_{}\]  
    for all $b\in B$. 
   (Here $p_b$ denotes the restriction of $p$ to $\wt{Y_b\times Y_b}$, which is the blow--up of $Y_b\times Y_b$ along the diagonal.)
    \end{proposition}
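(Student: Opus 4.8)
The plan is to use a standard spread/decomposition-of-the-diagonal argument, running over the base $B$ of the family. The key input is Lemma \ref{projbundle}: the relative self-product $\wt{\YY\times_B\YY}$ sits as a Zariski open inside a projective bundle $V\to\wt{M\times M}$, and since $M$ has trivial Chow groups, so does $V$ (a projective bundle over a variety with trivial Chow groups has trivial Chow groups, by the projective bundle formula). Hence $A^n(V)$ is a finite-dimensional $\QQ$-vector space whose cycle class map to $H^{2n}(V)$ is injective.

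First I would transfer the hypothesis on $R$ along the open restriction. For each $b\in B$, the fibre $\wt{Y_b\times Y_b}$ is the blow-up of $Y_b\times Y_b$ along the diagonal, and it is an open subset of the fibre $V_b$ of $V\to\wt{M\times M}$ over... well, more precisely $\wt{Y_b\times Y_b}$ is open in $V$ itself cut out by the incidence condition; the point is that $R\vert_{\wt{Y_b\times Y_b}}$ is the restriction of the fixed class $R\in A^n(V)$. The hypothesis says this restriction vanishes in $H^{2n}(\wt{Y_b\times Y_b})$ for every $b$.

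The main step is a Leray–Hirsch / spreading-out argument in the spirit of \cite[Lemma 1.3]{V1} and \cite[Proposition 3.13]{V0}. One writes $R\in A^n(V)$, via the projective bundle structure $V\to\wt{M\times M}$ and the projective bundle formula, as a polynomial in the relative hyperplane class with coefficients pulled back from $A^\ast(\wt{M\times M})$; since $\wt{M\times M}$ has trivial Chow groups these coefficients are rational numbers times fixed cycles, so $R$ is a $\QQ$-linear combination of explicit "universally defined" classes. Restricting to $\wt{Y_b\times Y_b}$ and using that $H^n(Y_b)_{prim}\neq 0$ (which guarantees the fibres are "non-trivial" so that the primitive cohomology does not collapse and the Leray–Hirsch-type relations genuinely constrain the coefficients), the vanishing $R\vert_{\wt{Y_b\times Y_b}}=0$ in cohomology forces the relevant coefficients of $R$ — those involving the primitive part — to vanish, so that $R\vert_{\wt{Y_b\times Y_b}}$ is, modulo the exceptional divisor of the blow-up, the restriction of a class pulled back from $\wt{M\times M}$, uniformly in $b$. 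Pushing forward along $p_b$ (which contracts the exceptional divisor and is birational onto $Y_b\times Y_b$), the exceptional contributions that survive are again universally defined and pulled back from $M\times M$; assembling, one obtains a single class $\delta\in A^n(M\times M)$ with $(p_b)_\ast\bigl(R\vert_{\wt{Y_b\times Y_b}}\bigr)=\delta\vert_{Y_b\times Y_b}$ for all $b$ simultaneously.

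The hard part will be making precise the claim that the cohomological vanishing on every fibre forces the "primitive" coefficients of $R$ to vanish as a \emph{global} class on $V$, rather than merely fibre by fibre — this is exactly where one needs the hypothesis $H^n(Y_b)_{prim}\neq 0$ and a careful bookkeeping of how the primitive cohomology of the fibres sits inside $H^{2n}(\wt{Y_b\times Y_b})$ via the Künneth/blow-up formula. Once that is in hand, the remaining bookkeeping (rewriting everything in terms of classes on $M\times M$, checking the pushforward formula fibrewise) is routine, since all the classes involved are pulled back from a space with trivial Chow groups and are therefore detected by their cohomology classes. I would organise the write-up so that this cohomological coefficient-vanishing is isolated as the one substantive lemma, with the rest presented as formal consequences of the projective bundle formula and the triviality of the Chow groups of $M$.
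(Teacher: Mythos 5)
The paper itself simply cites Voisin's Proposition 1.6 in \cite{V1}, so the comparison is against Voisin's argument, and your outline does capture its overall shape: $V$ is a projective bundle over $\wt{M\times M}$, which has trivial Chow groups, so $R\in A^n(V)$ can be written via the projective bundle formula, and one then decomposes further. However, the step you flag as ``the hard part'' is mis-stated in a way that would derail the write-up. Two points you leave unexplained or get wrong. First, you never say \emph{why} only the degree-zero coefficient in the $h$-expansion survives upon restriction: the reason is that $\wt{Y_b\times Y_b}$ is a fibre of the \emph{other} projection $V\to\vert L\vert$, so the relative hyperplane class $h$ restricts to zero (up to classes pulled back from $\wt{M\times M}$), whence $R\vert_{\wt{Y_b\times Y_b}}=\alpha_0\vert_{\wt{Y_b\times Y_b}}$ with $\alpha_0\in A^n(\wt{M\times M})$. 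Second, and more seriously, your claim that ``the exceptional contributions that survive are again universally defined and pulled back from $M\times M$'' is false, and reverses the actual logic. Writing $\alpha_0=\rho^\ast\delta+i_\ast\gamma$ via the blow-up formula (with $\rho\colon\wt{M\times M}\to M\times M$ and $i\colon E\hookrightarrow\wt{M\times M}$), the push-forward $(p_b)_\ast$ of the restricted exceptional term is, by a dimension count, a multiple $\lambda[\Delta_{Y_b}]$, and $[\Delta_{Y_b}]$ is precisely \emph{not} a restriction from $M\times M$ when $H^n(Y_b)_{prim}\not=0$ (its Künneth component in $H^n(Y_b)_{prim}\otimes H^n(Y_b)_{prim}$ is the identity, while classes restricted from $M\times M$ have no such component).

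The hypothesis $H^n(Y_b)_{prim}\not=0$ is therefore used to force $\lambda=0$, not to show the diagonal term is pulled back: from the given cohomological vanishing of $R\vert_{\wt{Y_b\times Y_b}}$ one gets $[\delta\vert_{Y_b\times Y_b}]+\lambda[\Delta_{Y_b}]=0$ in $H^{2n}(Y_b\times Y_b)$, and projecting to the primitive-$\otimes$-primitive Künneth piece kills $\lambda$. Your phrasing that the vanishing ``forces the relevant coefficients of $R$ --- those involving the primitive part --- to vanish'' does not parse: the coefficients of $R$ live over $\wt{M\times M}$, where there is no ``primitive part of $Y_b$''; primitive cohomology only enters after restricting the diagonal to $Y_b\times Y_b$ and decomposing by Künneth. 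Once these two points are repaired, the rest of your outline (push-forward of $\rho^\ast\delta$ giving $\delta\vert_{Y_b\times Y_b}$, uniformity in $b$ because everything is defined over $\wt{M\times M}$) is correct and in line with Voisin's proof.
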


\begin{proof} This is \cite[Proposition 1.6]{V1}.
\end{proof}

 The following is an equivariant version of proposition \ref{voisin1}:
 
  \begin{proposition}[Voisin \cite{V1}]\label{voisin2} Let $M$ and $L$ be as in proposition \ref{voisin1}. Let $G\subset\aut(M)$ be a finite group. Assume the following:
  
  \noindent
  (\rom1) The linear system $\vert L\vert^G:=\PP\bigl( H^0(M,L)^G\bigr)$ has no base--points, and the locus of points in $\wt{M\times M}$ parametrizing triples $(x,y,z)$ such that the length $2$ subscheme $z$ imposes only one condition on $\vert L\vert^G$ is contained in the union of (proper transforms of) graphs of non--trivial elements of $G$, plus some loci of codimension $>n+1$.
  
  \noindent
  (\rom2) Let $B\subset\vert L\vert^G$ be the open parametrizing smooth hypersurfaces, and let $Y_b\subset M$ be a hypersurface for $b\in B$ general. There is no non--trivial relation
   \[ {\displaystyle\sum_{g\in G}} c_g \Gamma_g +\gamma=0\ \ \ \hbox{in}\ H^{2n}(Y_b\times Y_b)\ ,\]
   where $c_g\in\QQ$ and $\gamma$ is a cycle in $\ima\bigl( A^n(M\times M)\to A^n(Y_b\times Y_b)\bigr)$.
   
   Let $R\in A^n(\YY\times_B \YY)$ be such that
     \[  R\vert_{{Y_b\times Y_b}}=0\ \ \in H^{2n}({Y_b\times Y_b})\ \ \ \forall b\in B\ .\]
    Then there exists $\delta\in A^n(M\times M)_{}$ such that
    \[     R\vert_{{Y_b\times Y_b}}= \delta\vert_{Y_b\times Y_b}  \ \ \in A^{n}({Y_b\times Y_b})\ \ \ \forall b\in B\ .\]  
  \end{proposition}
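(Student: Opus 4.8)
Here is the plan. The plan is to adapt Voisin's proof of Proposition~\ref{voisin1} (i.e. \cite[Proposition~1.6]{V1}) to the equivariant situation; the one genuinely new point is that $V$ is no longer a projective bundle over all of $\wt{M\times M}$, because $\vert L\vert^G$ fails to impose two conditions on certain length--$2$ subschemes, and hypotheses (\rom1) and (\rom2) are exactly what is needed to deal with this. First I would build the universal parameter space. Mimicking the construction in Lemma~\ref{projbundle}, with $\vert L\vert^G$ in place of $\vert L\vert$, set
  \[  V:=\Bigl\{\bigl((x,y,z),s\bigr)\ \vert\ s\in\vert L\vert^G,\ s\vert_z=0\Bigr\}\ \subset\ \wt{M\times M}\times\vert L\vert^G\ ,\]
with its two projections $\pi\colon V\to\wt{M\times M}$ and $q\colon V\to\vert L\vert^G$. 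The fibre of $q$ over $b\in B$ is the blow--up $\wt{Y_b\times Y_b}$ of $Y_b\times Y_b$ along its diagonal, so $q^{-1}(B)=\wt{\YY\times_B\YY}$; write $p\colon\wt{\YY\times_B\YY}\to\YY\times_B\YY$ and $p_b\colon\wt{Y_b\times Y_b}\to Y_b\times Y_b$ for the blow--downs, and $\xi:=q^\ast h\in A^1(V)$ for the pullback of the hyperplane class $h$ of $\vert L\vert^G$. The morphism $\pi$ realizes $V$ as the projectivization of the sheaf on $\wt{M\times M}$ whose fibre over $(x,y,z)$ is the space of $G$--invariant sections of $L$ vanishing on $z$, and $\xi$ is its relative hyperplane class. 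By hypothesis (\rom1) this sheaf is locally free of the expected rank away from a closed subset $W\subset\wt{M\times M}$, with $W$ contained in the union of the (proper transforms of the) graphs $\Gamma_g$ ($g\in G$, $g\neq 1$) and a closed subset of codimension $>n+1$; hence $\pi$ is an honest projective bundle over $U:=\wt{M\times M}\setminus W$, and along each $\Gamma_g$ the fibre of $\pi$ jumps by exactly one dimension.

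Next I would spread out $R$ and decompose it. Since $M$ has trivial Chow groups, so have $M\times M$ and $\wt{M\times M}$. As $\wt{\YY\times_B\YY}$ is Zariski open in $V$, the class $p^\ast R\in A^n(\wt{\YY\times_B\YY})$ lifts, via the localization surjection $A^n(V)\twoheadrightarrow A^n(q^{-1}(B))$, to some $\wt{R}\in A^n(V)$. The key numerology is that $\Gamma_g$ has codimension exactly $n+1$ in $\wt{M\times M}$, hence $\pi^{-1}(\Gamma_g)$ has codimension exactly $n$ in $V$ and contributes a class to $A^n(V)$, whereas the preimage of the remaining part of $W$ has codimension $>n$ and therefore contributes nothing. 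Feeding this into the localization sequence for $\pi^{-1}(U)\subset V$ and combining with the projective bundle formula over $U$, I expect a decomposition
  \[  \wt{R}\ =\ \sum_{i\ge 0}\,\pi^\ast(\alpha_i)\cdot\xi^i\ +\ \sum_{\substack{g\in G\\ g\neq 1}}\,c_g\,\bigl[\pi^{-1}(\Gamma_g)\bigr]\ \ \in A^n(V)\ ,\]
with $\alpha_i\in A^{n-i}(\wt{M\times M})$ and $c_g\in\QQ$.

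Now I would restrict to the fibres of $q$. Since $\xi$ is pulled back from $\vert L\vert^G$, one has $\xi\vert_{q^{-1}(b)}=0$ for every $b$, so only the $i=0$ term of the first sum survives restriction to $q^{-1}(b)=\wt{Y_b\times Y_b}$; and since $Y_b$ is $G$--invariant, for $b$ general $\pi^{-1}(\Gamma_g)\cap q^{-1}(b)$ is the proper transform of $\Gamma_{g\vert_{Y_b}}\subset Y_b\times Y_b$. Applying $(p_b)_\ast$, the projection formula, and the blow--up decomposition $\alpha_0=p^\ast\gamma+\epsilon$ (with $\gamma\in A^n(M\times M)$ and $\epsilon$ supported on the exceptional divisor, whose push--down is a multiple of the diagonal $\Gamma_{1\vert_{Y_b}}$), I obtain, for $b$ general,
  \[  R\vert_{Y_b\times Y_b}\ =\ \gamma\vert_{Y_b\times Y_b}\ +\ \sum_{g\in G}\,c_g\,\Gamma_{g\vert_{Y_b}}\ \ \in A^n(Y_b\times Y_b)\ ,\]
with $\gamma$ and the $c_g$ independent of $b$. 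By hypothesis the left--hand side is cohomologically trivial, so $\sum_{g\in G}c_g[\Gamma_{g\vert_{Y_b}}]+[\gamma\vert_{Y_b\times Y_b}]=0$ in $H^{2n}(Y_b\times Y_b)$, with $\gamma\vert_{Y_b\times Y_b}$ in the image of $A^n(M\times M)$; hypothesis (\rom2) then forces $c_g=0$ for all $g$. Consequently $\wt{R}=\sum_{i\ge 0}\pi^\ast(\alpha_i)\xi^i$ is ``purely ambient'', and restricting this to $q^{-1}(b)$ and pushing down yields $R\vert_{Y_b\times Y_b}=\gamma\vert_{Y_b\times Y_b}$ for \emph{every} $b\in B$ (the exceptional contribution being a multiple of the diagonal which, as an intersection number over the irreducible base $B$, equals the constant $0$ found for $b$ general). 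Thus $\delta:=\gamma$ does the job.

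The hard part will be the first two steps: because $\vert L\vert^G$ does not separate all length--$2$ subschemes, $V$ is not literally a projective bundle, and the whole argument hinges on controlling the ``extra'' codimension--$n$ cycle classes this creates on $V$. Hypothesis (\rom1) is precisely the tool for this --- it confines the degeneracy of $\pi$ to the graphs $\Gamma_g$ (while keeping the residual degeneracy in codimension $>n+1$, which is invisible to $A^n(V)$), so that the only new classes are the $[\pi^{-1}(\Gamma_g)]$, which is why the graphs appear in the displayed formula --- while hypothesis (\rom2), a Noether--Lefschetz/monodromy type statement, is what then licenses discarding the graph terms. Verifying (\rom1) and (\rom2) for the two explicit families of Theorem~\ref{main} is where the real labour lies, and is carried out in the sections that follow.
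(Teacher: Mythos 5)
Your proposal is correct in substance and arrives at the same key intermediate decomposition as the paper's argument, namely that on each fibre $R\vert_{Y_b\times Y_b}$ equals the restriction of an ambient class plus $\sum_g c_g\,\Gamma_g$, after which hypothesis (\rom2) kills the graph terms exactly as you say. The only genuine difference is the technical route to that decomposition. The paper, following Voisin's proof of \cite[Proposition 3.1 and Theorem 3.3]{V1}, first blows up $\wt{M\times M}$ along the (proper transforms of the) graphs $\Gamma_g$ and the residual codimension $>n+1$ locus, obtaining $M'\to\wt{M\times M}$, and then base--changes $V$ to a genuine projective bundle $V'\to M'$; the global projective bundle formula plus the geometry of the blow--ups directly produce \cite[Equation (15)]{V1}. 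You instead work on $V$ itself, using the localization exact sequence for $\pi^{-1}(U)\subset V$ (where $U$ is the complement of the degeneracy locus) and the projective bundle formula over $U$, then arguing by a dimension count that the only new contributions to $A^n(V)$ from the closed complement are the classes $[\pi^{-1}(\Gamma_g)]$. This works, but it requires the bookkeeping you flag: one needs base--point freeness of $\vert L\vert^G$ to ensure the fibre of $\pi$ jumps by exactly one over the whole degeneracy locus (so that the codimension $>n+1$ part of $W$ has preimage of codimension $>n$ in $V$ and contributes nothing in degree $n$), and one needs the components of $\pi^{-1}(\Gamma_g)$ that dominate $\Gamma_g$ to carry the only classes of codimension $n$. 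Voisin's blow--up avoids these case distinctions at the cost of introducing $M'$ and $V'$, which is why the paper delegates to her argument; your localization route is a legitimate, slightly more elementary alternative that trades the blow--up for a codimension count.
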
 
  
 \begin{proof} This is not stated verbatim in \cite{V1}, but it is contained in the proof of \cite[Proposition 3.1 and Theorem 3.3]{V1}. We briefly review the argument.
 One considers
   \[  V:=\Bigl\{ \bigl((x,y,z),\sigma\bigr) \ \vert\ \sigma\vert_z=0\Bigr\}\ \ \subset\ \wt{M\times M}\times \vert L\vert^G\ .\]   
   The problem is that this is no longer a projective bundle over $\wt{M\times M}$. However, as explained in the proof of \cite[Theorem 3.3]{V1}, hypothesis (\rom1) ensures that one 
   can obtain a projective bundle after blowing up the graphs $\Gamma_g, g\in G$ plus some loci of codimension $>n+1$. Let $M^\prime\to\wt{M\times M}$ denote the result of these blow--ups, and let $V^\prime\to M^\prime$ denote the projective bundle obtained by base--changing. 

Analyzing the situation as in \cite[Proof of Theorem 3.3]{V1}, one obtains
   \[ R\vert_{Y_b\times Y_b} =R_0\vert_{Y_b\times Y_b}+ {\displaystyle\sum_{g\in G}} \lambda_g \Gamma_g\ \ \ \hbox{in}\ A^n(Y_b\times Y_b) \ ,\]
   where $R_0\in A^n(M\times M)$ and $\lambda_g\in\QQ$ (this is \cite[Equation (15)]{V1}).
   By assumption, $R\vert_{Y_b\times Y_b}$ is homologically trivial. Using hypothesis (\rom2), this implies that all $\lambda_g$ have to be $0$.   
     \end{proof}

\subsection{MCK decomposition}

\begin{definition}[Murre \cite{Mur}] Let $X$ be a smooth projective variety of dimension $n$. We say that $X$ has a {\em CK decomposition\/} if there exists a decomposition of the diagonal
   \[ \Delta_X= \pi_0+ \pi_1+\cdots +\pi_{2n}\ \ \ \hbox{in}\ A^n(X\times X)\ ,\]
  such that the $\pi_i$ are mutually orthogonal idempotents and $(\pi_i)_\ast H^\ast(X)= H^i(X)$.
  
  (NB: ``CK decomposition'' is shorthand for ``Chow--K\"unneth decomposition''.)
\end{definition}

\begin{remark} The existence of a CK decomposition for any smooth projective variety is part of Murre's conjectures \cite{Mur}, \cite{J2}. 
\end{remark}

\begin{definition}[Shen--Vial \cite{SV}] Let $X$ be a smooth projective variety of dimension $n$. Let $\Delta_X^{sm}\in A^{2n}(X\times X\times X)$ be the class of the small diagonal
  \[ \Delta_X^{sm}:=\bigl\{ (x,x,x)\ \vert\ x\in X\bigr\}\ \subset\ X\times X\times X\ .\]
  An {\em MCK decomposition\/} is a CK decomposition $\{\pi^X_i\}$ of $X$ that is {\em multiplicative\/}, i.e. it satisfies
  \[ \pi^X_k\circ \Delta_X^{sm}\circ (\pi^X_i\times \pi^X_j)=0\ \ \ \hbox{in}\ A^{2n}(X\times X\times X)\ \ \ \hbox{for\ all\ }i+j\not=k\ .\]
  
 (NB: ``MCK decomposition'' is shorthand for ``multiplicative Chow--K\"unneth decomposition''.) 
  
 A {\em weak MCK decomposition\/} is a CK decomposition $\{\pi^X_i\}$ of $X$ that satisfies
    \[ \Bigl(\pi^X_k\circ \Delta_X^{sm}\circ (\pi^X_i\times \pi^X_j)\Bigr){}_\ast (a\times b)=0 \ \ \ \hbox{for\ all\ } a,b\in\ A^\ast(X)\ .\]
  \end{definition}
  
  \begin{remark} The small diagonal (seen as a correspondence from $X\times X$ to $X$) induces the {\em multiplication morphism\/}
    \[ \Delta_X^{sm}\colon\ \  h(X)\otimes h(X)\ \to\ h(X)\ \ \ \hbox{in}\ \MM_{\rm rat}\ .\]
 Suppose $X$ has a CK decomposition
  \[ h(X)=\bigoplus_{i=0}^{2n} h^i(X)\ \ \ \hbox{in}\ \MM_{\rm rat}\ .\]
  By definition, this decomposition is multiplicative if for any $i,j$ the composition
  \[ h^i(X)\otimes h^j(X)\ \to\ h(X)\otimes h(X)\ \xrightarrow{\Delta_X^{sm}}\ h(X)\ \ \ \hbox{in}\ \MM_{\rm rat}\]
  factors through $h^{i+j}(X)$.
  
  If $X$ has a weak MCK decomposition, then setting
    \[ A^i_{(j)}(X):= (\pi^X_{2i-j})_\ast A^i(X) \ ,\]
    one obtains a bigraded ring structure on the Chow ring: that is, the intersection product sends $A^i_{(j)}(X)\otimes A^{i^\prime}_{(j^\prime)}(X) $ to  $A^{i+i^\prime}_{(j+j^\prime)}(X)$.
    
      It is expected (but not proven !) that for any $X$ with a weak MCK decomposition, one has
    \[ A^i_{(j)}(X)\stackrel{??}{=}0\ \ \ \hbox{for}\ j<0\ ,\ \ \ A^i_{(0)}(X)\cap A^i_{hom}(X)\stackrel{??}{=}0\ ;\]
    this is related to Murre's conjectures B and D, that have been formulated for any CK decomposition \cite{Mur}.

  The property of having an MCK decomposition is severely restrictive, and is closely related to Beauville's ``(weak) splitting property'' \cite{Beau3}. For more ample discussion, and examples of varieties with an MCK decomposition, we refer to \cite[Section 8]{SV}, as well as \cite{V6}, \cite{SV2}, \cite{FTV}.
    \end{remark}

In what follows, we will make use of the following: 

\begin{theorem}[Shen--Vial \cite{SV}]\label{sv} Let $Y\subset\PP^5(\C)$ be a smooth cubic fourfold, and let $X:=F(Y)$ be the Fano variety of lines in $Y$. There exists a self--dual CK decomposition $\{\Pi^X_i\}$ for $X$, and 
  \[ (\Pi^X_{2i-j})_\ast A^i(X) = A^i_{(j)}(X)\ ,\]
  where the right--hand side denotes the splitting of the Chow groups defined in terms of the Fourier transform as in \cite[Theorem 2]{SV}. Moreover, we have
  \[ A^i_{(j)}(X)=0\ \ \ \hbox{if\ }j<0\ \hbox{or\ }j>i\ \hbox{or\ } j\ \hbox{is\ odd}\ .\]
  
  In case $Y$ is very general, the Fourier decomposition $A^\ast_{(\ast)}(X)$ forms a bigraded ring, and hence
  $\{\Pi^X_i\}$ is a weak MCK decomposition.
    \end{theorem}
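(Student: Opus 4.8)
The plan is to deduce the statement from the work of Shen--Vial \cite{SV} by assembling their results in the right order; no genuinely new idea is needed beyond organizing the quotations, and I will indicate which input is the substantial one. Throughout one uses that $X=F(Y)$ is a hyperk\"ahler fourfold of $K3^{[2]}$--type, so that the odd cohomology of $X$ vanishes and the only non--trivial K\"unneth components live in degrees $0,2,4,6,8$, the ``interesting'' one being the transcendental part of $H^4(X)$.

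First I would invoke \cite[Theorem 2]{SV}: Shen--Vial construct a canonical symmetric cycle $L\in A^2(X\times X)$ satisfying a quadratic relation, and the associated Fourier transform $\exp(L)$ induces the splitting $A^i(X)=\bigoplus_j A^i_{(j)}(X)$ together with the vanishings $A^i_{(j)}(X)=0$ unless $0\le j\le i$ and $j$ is even. Still following \cite{SV}, and using that away from the transcendental part of $H^4(X)$ the cohomology of $X$ is generated by powers of $L$ and of the polarization, one writes down an explicit mutually orthogonal self--dual system of projectors $\{\Pi^X_i\}_{i=0}^{8}$ refining this grading, with $(\Pi^X_{2i-j})_\ast A^i(X)=A^i_{(j)}(X)$; self--duality is automatic since $L={}^tL$. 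This gives the first two assertions.

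For the bigraded ring property when $Y$ is very general, note first that the only products requiring attention are $A^1\cdot A^1$, $A^1\cdot A^2_{(j)}$, $A^1\cdot A^3_{(j)}$ and $A^2_{(s)}\cdot A^2_{(t)}$ for the admissible $j,s,t$, since every other product lands in $A^{\ge5}(X)=0$ and is vacuous. The deeper of these --- notably $A^2_{(2)}(X)\cdot A^2_{(2)}(X)\subseteq A^4_{(4)}(X)$, and the compatibility of the divisor action with the $(2)$--grading --- are supplied by the partial multiplicativity results of \cite{SV}, which exploit the quadratic relation for $L$ together with Voisin's ``spread'' technique (in the spirit of lemma \ref{projbundle} and proposition \ref{voisin1}). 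The products that force the ``very general'' hypothesis are those landing in $A^4_{(0)}(X)=\QQ\oo_X$, namely $A^2_{(0)}\cdot A^2_{(0)}\subseteq A^4_{(0)}$ and $A^1\cdot A^3_{(0)}\subseteq A^4_{(0)}$: for $Y$ very general the groups $A^1(X)$, $A^2_{(0)}(X)$, $A^3_{(0)}(X)$ are finite--dimensional and spanned by restrictions of cycles defined over the whole base $B$ of the universal family $\FF\to B$ of Fano varieties (e.g. $g$, $g^2$, $c_2(X)$, $g^3$), so the products in question, minus the appropriate rational multiple of $\oo_X$, are generically defined homologically trivial $0$--cycles and hence vanish by the Franchetta property for $F(Y)$. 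Assembling these inclusions shows that $A^\ast_{(\ast)}(X)$ is a bigraded ring, i.e. $\{\Pi^X_i\}$ is a weak MCK decomposition.

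The main obstacle lies entirely inside \cite{SV}: it is the construction of $L$, the verification of its quadratic relation, and the resulting control of $A^2_{(2)}(X)\cdot A^2_{(2)}(X)$, all of which rest on Voisin's spreading--out method and occupy the bulk of that memoir. Everything else --- the vanishing ranges, the orthogonalization into the $\Pi^X_i$, and the Franchetta argument for the $(0)$--pieces --- is comparatively formal bookkeeping. Accordingly I would not attempt to reprove any of this, but present the theorem as a citation with the above indication of how the pieces fit together.
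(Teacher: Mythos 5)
The paper's own ``proof'' of this theorem is a one--line citation to \cite{SV} (with a pointer to \cite[Theorem 2.11]{nonsymp3} for precise attributions to results within that memoir). So the role of this proof in the paper is purely that of a reference, and the bar for a ``proof'' here is simply to locate the right statements in \cite{SV}. Your first two paragraphs --- the existence of the cycle $L$ and the Fourier transform from \cite[Theorem 2]{SV}, the construction of the self--dual projectors $\Pi^X_i$ compatible with the grading, and the range of vanishing --- are consistent with \cite{SV} and with what the paper cites.

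The third paragraph, however, contains a genuine gap. To get $A^2_{(0)}(X)\cdot A^2_{(0)}(X)\subset A^4_{(0)}(X)$ for very general $Y$, you assert that $A^2_{(0)}(X)$ and $A^3_{(0)}(X)$ are finite--dimensional and spanned by the universal classes $g^2$, $c_2(X)$, $g^3$. This claim presupposes $A^i_{(0)}(X)\cap A^i_{hom}(X)=0$, i.e. that the $(0)$--pieces inject into cohomology (so that for very general $Y$ they coincide with the small space of algebraic Hodge classes generated by $g$ and $c_2$). But that injectivity is precisely the kind of statement that is not available --- indeed the paper's own Remark~\ref{pity} flags $A^2_{(0)}\cdot A^2_{(0)}\subset A^4_{(0)}$ as the open point for general $Y$ and points to \cite[Section 22.3]{SV}, where the very--general case is handled by a different route internal to \cite{SV}, not by assuming $A^2_{(0)}$ is generated by spread--out cycles. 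Relatedly, the Franchetta property for the universal family $F(Y)\to B$ is the content of \cite{FLV}, which post--dates \cite{SV}; so even if the Franchetta route could be made to work, it would be a different (and anachronistic) argument rather than a reconstruction of Shen--Vial's. If you want to keep this paragraph, you should either cite the relevant statement of \cite{SV} for the very--general multiplicativity directly, or justify the spanning claim for $A^2_{(0)}(X)$, $A^3_{(0)}(X)$ as a separate step rather than assuming it.
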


\begin{proof} This is a summary of results in \cite{SV} (cf. also \cite[Theorem 2.11]{nonsymp3} for precise attributions).


    \end{proof}

\begin{remark}\label{pity} Unfortunately, it is not yet known that the Fourier decomposition of \cite{SV} induces a bigraded ring structure on the Chow ring for {\em all\/} Fano varieties of smooth cubic fourfolds. For one thing, it has not yet been proven that $A^2_{(0)}(X)\cdot A^2_{(0)}(X)\subset A^4_{(0)}(X)$ (cf. \cite[Section 22.3]{SV} for discussion).
\end{remark}

 \subsection{Relative CK decomposition}

\begin{notation}\label{not}  Let
   \[ \YY\ \to\ B \]
   denote the universal family of smooth cubic fourfolds $Y_b$. Here $B$ is a Zariski open in the parameter space 
   $\PP H^0(\PP^5,\OO_{\PP^5}(3))$. Let
   \[ \XX := \{  (\ell, b)\ \vert\ \ell \subset Z_b \}\ \ \ \subset\ G(1,5)\times B \]
   be the corresponding family of Fano varieties of lines. (Here $G(1,5)$ denotes the Grassmannian of lines in $\PP^5$.) A fibre $X_b$ of $\XX\to B$ is a Fano variety of lines on a cubic $Y_b$.
\end{notation}

 \begin{proposition}\label{relck} Let $\XX\to B$ be as above. There exist relative correspondences
   \[ \Pi_i^\XX\ \ \ \in A^4(\XX\times_B \XX)\ \ \ (i=2,6)\ ,\]
   with the property that for each $b\in B$, one has
   \[ \begin{split}  \bigl((\Pi_2^\XX)\vert_{X_b\times X_b}\bigr){}_\ast &= (\Pi_2^{X_b})_\ast\colon\ \ \  A^2(X_b)\ \to\ A^2(X_b) \ ,\\
                            \bigl((\Pi_6^\XX)\vert_{X_b\times X_b}\bigr){}_\ast &= (\Pi_6^{X_b})_\ast\colon\ \ \  A^4(X_b)\ \to\ A^4(X_b) \ .\\
                            \end{split} \]
Here $\Pi_i^{X_b}$  is the Chow--K\"unneth decomposition of theorem \ref{sv}(\rom1).
 \end{proposition}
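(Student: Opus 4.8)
The plan is to construct the relative projectors $\Pi_i^\XX$ by spreading out the fibrewise constructions of Shen--Vial, using that $B$ is a Zariski open in a projective space and that the cohomology groups $H^\ast(X_b)$ form a local system with structure controlled by the primitive cohomology of the cubic $Y_b$. First I would recall the explicit shape of the Chow--K\"unneth projectors $\Pi_2^{X_b}$ and $\Pi_6^{X_b}$ from \cite{SV}: the degree-$2$ piece is built from the incidence correspondence $I\subset X_b\times Y_b$ between the Fano variety and the cubic (so that $H^2(X_b)$ is cut out by the pullback of $H^4(Y_b)$), while $\Pi_6^{X_b}$ is its dual, possibly corrected by a multiple of $L\times L$ where $L$ is the Pl\"ucker polarization. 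The point is that every building block in these formulae --- the incidence variety, the polarization classes, the intersection numbers used to normalize the idempotents --- exists in the relative setting over $B$: there is a relative incidence correspondence $\mathcal I\subset\XX\times_B\YY$, a relative polarization $\mathcal L\in A^1(\XX)$, and the relevant intersection numbers are deformation invariants (they are topological), hence constant on $B$. Assembling these exactly as in the fibrewise formulae yields relative correspondences $\Pi_i^\XX\in A^4(\XX\times_B\XX)$ whose restriction to a fibre $X_b\times X_b$ is, tautologically, the cycle $\Pi_i^{X_b}$ used by Shen--Vial.

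The key steps, in order: (1) write down $\Pi_2^\XX$ and $\Pi_6^\XX$ as explicit $\QQ$-linear combinations of $(p_{13})_\ast\bigl((\mathcal I\times_B \mathcal I)\cdot \cdots\bigr)$-type cycles and of products of relative polarization classes, mirroring the closed formulae in \cite[Section 2 and the Fano-variety sections]{SV}; (2) check that restricting such a cycle to a fibre commutes with the construction --- this is the statement that flat pullback along $X_b\times X_b\hookrightarrow \XX\times_B\XX$ (a regular embedding, or a limit thereof in the base) is compatible with the correspondence operations $\cdot$, $(p_{ij})_\ast$, $\circ$; (3) conclude that the action of $(\Pi_i^\XX\vert_{X_b\times X_b})_\ast$ on $A^\ast(X_b)$ agrees with $(\Pi_i^{X_b})_\ast$, since both are given by the same cycle class. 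Strictly speaking one only needs the equality of the two cycles in $A^4(X_b\times X_b)$, which is immediate from step (2) once the formulae in step (1) are literally the same expressions with "$\times$" replaced by "$\times_B$".

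The main obstacle --- really the only subtle point --- is step (2): ensuring that the fibrewise restriction of the relative cycle is honestly equal to the Shen--Vial projector of that fibre, and not merely cohomologically equal or equal up to a fibre-dependent constant. This requires that all normalizing constants in the Shen--Vial formulae be genuinely independent of $b$ (so that one single relative cycle does the job for every fibre simultaneously), which follows because those constants are intersection numbers on $X_b$, hence locally constant on the connected base $B$; and it requires a base-change/compatibility statement for refined Gysin pullback along the inclusion of a fibre, which is standard (\cite[Chapters 6, 17]{F}) since $\XX\to B$ is smooth projective. I would also remark that self-duality of the $\Pi_i^\XX$ is inherited from self-duality of the fibrewise $\Pi_i^{X_b}$ because transposition is a relative operation over $B$. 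No finite-dimensionality or MCK input is needed here; this proposition is purely a "spreading out" of an already-constructed fibrewise object, and its proof is essentially a bookkeeping check that every ingredient of \cite{SV}'s construction is defined over $B$.
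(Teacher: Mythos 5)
Your overall strategy---spread every ingredient of the Shen--Vial construction over the base $B$ and observe that restriction to a fibre commutes with all the correspondence operations---is the paper's strategy, so you have the right idea. Two points of imprecision are worth flagging. First, the correspondence appearing in the Shen--Vial class $L$ is the incidence $I\subset X_b\times X_b$ (pairs of incident lines), not the universal line $P\in A^3(X_b\times Y_b)$ between the Fano variety and the cubic; your ``$\mathcal I\subset\XX\times_B\YY$'' conflates the two, and the ``$L\times L$''-correction formula you describe for $\Pi_6$ is not the one in \cite{SV}. Second, and more substantively, you claim equality of \emph{cycles} between $\Pi_i^\XX\vert_{X_b\times X_b}$ and $\Pi_i^{X_b}$, which would force you to spread out the entire idempotent-orthogonalization procedure of \cite[Theorem 3.3]{SV}. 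The paper sidesteps this: it spreads out only the simpler cycle $p_b=\tfrac{1}{25}\,L\cdot\ell_2$ (where $L$ and $\ell$ are built from $I$, the Pl\"ucker class $g$, and $c=c_2(\EE_2)$, all of which obviously exist relatively), and then invokes \cite[Proof of Theorem 3.3]{SV} to see that $(p_b)_\ast$ acts as the identity on $A^4_{(2)}(X_b)$ and as zero on $A^4_{(0)}\oplus A^4_{(4)}(X_b)$, setting $\Pi_6^\XX:=\pp$ and $\Pi_2^\XX:={}^t\pp$. The proposition only asserts equality of \emph{actions} on Chow groups, which is exactly what this weaker spreading-out delivers, and it avoids any worry about whether the full projector construction globalizes. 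Your proof would likely go through, but the paper's version is cleaner and requires less.
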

 
 \begin{proof} The main point is that the Shen--Vial cycle $L\in A^2(X_b\times X_b)$ furnishing the Fourier decomposition \cite{SV} exists relatively: this is because by definition
   \[ L:= {1\over 3}(g_1^2+{3\over 2}g_1 g_2+g_2^2-c_1-c_2)-I\ \ \ \in A^2(X_b\times X_b)\ \]
   \cite[Equation (107)]{SV}. Here, $g:=-c_1(\EE_2)\in A^1(X_b)$ and $c:=c_2(\EE_2)\in A^2(X_b)$ (and $\EE_2$ is the restriction of the rank $2$ tautological bundle on the Grassmannian), and $g_i:=(p_i)^\ast(g)$, $c_i:=(p_i)^\ast(c)$ (where $p_i\colon X_b\times X_b\to X_b$ is projection on the $i$th factor), and $I\subset X_b\times X_b$ is the incidence correspondence.
   Since $g_i, c_i$ and $I$ obviously exist relatively, the same goes for $L$, i.e. there exists a relative correspondence
   \[ \LL\ \ \ \in A^2(\XX\times_B \XX) \]
   with the property that for any $b\in B$ the restriction
   \[ \LL\vert_{X_b\times X_b}\ \ \ \in A^2(X_b\times X_b) \]
   is the Shen--vial class $L$ of \cite{SV}. This implies that the class $\ell\in A^2(X_b)$ of \cite{SV} (mentioned in theorem \ref{mult}(\rom2) below) also exists relatively: it is defined as 
    \[  \ell:=  (i_\Delta)^\ast(\LL)  \ \ \ \in A^2(\XX)\ ,\]
    where $i_\Delta\colon \XX\to \XX\times_B \XX$ denotes the embedding along the relative diagonal. (NB: this makes sense because $i_\Delta$ is a regular embedding.) Next, the classes $\ell_i:=(p_i)^\ast(\ell)\in A^2(X_b\times X_b)$ of \cite{SV} also exist relatively.
    
  Armed with these facts, let us inspect the construction of the $\{\Pi_i^{X_b}\}$ in \cite[Theorem 3.3]{SV}. 
  
    
    As a first approach towards the construction of $\Pi^{X_b}_2$ and $\Pi^{X_b}_6$, Shen--Vial define
    \[ p_b:={1\over 25} L\cdot \ell_2\ \ \ \in A^4(X_b\times X_b)\ .\]
    Again, by the above remarks the cycle $p_b$ exists relatively (i.e. there is $\pp\in A^4(\XX\times_B \XX)$ which restricts to $p_b$ on each fibre). 
   We define $\Pi_6^\XX:=\pp\in A^4(\XX\times_B \XX)$. This does the job, for it is shown in \cite[Proof of Theorem 3.3]{SV} that $(p_b)_\ast$ acts as the identity on $A^4_{(2)}(X_b)$ and acts as $0$ on $A^4_{(0)}(X_b)\oplus A^4_{(4)}(X_b)$. 
   
   We define $\Pi_2^\XX$ as the transpose $\Pi_2^\XX:={}^t \Pi_6^\XX$. This does the job, for it is shown in loc. cit. that $(p_b)^\ast$ acts as the identity on $A^2_{(2)}(X_b)$ and acts as $0$ on $A^2_{(0)}(X_b)$.
  \end{proof}

\subsection{Refined CK decomposition}

\begin{theorem}[]\label{pi20} Let $Y\subset\PP^5(\C)$ be a smooth cubic fourfold, and let $X:=F(Y)$ be the Fano variety of lines in $Y$. 

 Then $X$ has a CK decomposition $\{\Pi^X_i\}$. Moreover, there exists a further splitting
  \[ \Pi^X_2 = \pi^X_{2,0} + \pi^X_{2,1}\ \ \ \hbox{in}\ A^4(X\times X)\ ,\]
  where $\pi^X_{2,1}$ is supported on $C\times D\subset X\times X$, where $C$ and $D$ are a curve, resp. a divisor on $X$.
  
  The action on cohomology verifies
  \[ (\pi^X_{2,0})_\ast H^\ast(X) = H^2_{tr}(X)\ ,\]
  where $H^2_{tr}(X)\subset H^2(X)$ is defined as the orthogonal complement of $NS(X)$ with respect to the Beauville--Bogomolov form. 
  
  The action on Chow groups verifies
  \[  (\pi^X_{2,0})_\ast A^2(X) = A^2_{(2)}(X)\ .\]
  \end{theorem}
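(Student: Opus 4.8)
The plan is to build the refined projector $\pi^X_{2,0}$ by correcting the Shen--Vial projector $\Pi^X_2$ along the Néron--Severi part of $H^2(X)$. First I would recall that, since $X=F(Y)$ is a hyperkähler fourfold of $K3^{[2]}$ type, the Beauville--Bogomolov form gives an orthogonal decomposition $H^2(X)=NS(X)\oplus H^2_{tr}(X)$, and $NS(X)$ is algebraic. Pick a basis $\{E_1,\dots,E_\rho\}$ of $NS(X)$ consisting of divisor classes, and let $\{E_1^\vee,\dots,E_\rho^\vee\}\subset H^6(X)$ be the dual basis with respect to Poincaré duality (these are algebraic as well, being $1$-cycle classes in the Chow group after choosing algebraic representatives; note that for $X$ of $K3^{[2]}$ type the cycle class map is injective on $A^1$ and on $A^3$ by Shen--Vial, so one can lift them canonically). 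Set
\[
\pi^X_{2,1} := \sum_{k=1}^{\rho} E_k \times E_k^\vee \ \ \in A^4(X\times X)\ ,
\]
which by construction is supported on $C\times D$ with $D=\bigcup \mathrm{Supp}(E_k)$ a divisor and $C=\bigcup\mathrm{Supp}(E_k^\vee)$ a curve, and define $\pi^X_{2,0}:=\Pi^X_2-\pi^X_{2,1}$.

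Next I would check the cohomological statements. By orthonormalising the $E_k$ one arranges $(\pi^X_{2,1})_\ast$ to be the orthogonal projector of $H^\ast(X)$ onto $NS(X)\subset H^2(X)$; since $(\Pi^X_2)_\ast$ is the projector onto all of $H^2(X)$, the difference $(\pi^X_{2,0})_\ast$ projects onto $H^2_{tr}(X)$, as claimed. One then verifies $\pi^X_{2,0}$ is idempotent and orthogonal to $\pi^X_{2,1}$: this follows because $\pi^X_{2,1}$ is a sum of "decomposable" correspondences $E_k\times E_k^\vee$ whose composition law is governed by intersection numbers $\langle E_k, E_l^\vee\rangle=\delta_{kl}$, so $\pi^X_{2,1}\circ\pi^X_{2,1}=\pi^X_{2,1}$, and $\pi^X_{2,1}\circ\Pi^X_2=\Pi^X_2\circ\pi^X_{2,1}=\pi^X_{2,1}$ because $E_k\in A^1=A^1_{(0)}$ and $E_k^\vee\in A^3$ are sent by $\Pi^X_2$ to the degree-$2$ part — here I would invoke the Fourier-theoretic identities of \cite{SV} describing the action of $\Pi^X_2$ and of divisor classes. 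Hence $\{\Pi^X_i\}$ with $\Pi^X_2$ replaced by the pair $\pi^X_{2,0},\pi^X_{2,1}$ remains a (refined) CK decomposition.

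Finally, for the Chow-group statement, I would use $A^2_{(2)}(X)=(\Pi^X_2)_\ast A^2(X)$ (theorem \ref{sv}) together with the fact, also due to Shen--Vial, that $A^2(X)$ decomposes as $A^2_{(0)}(X)\oplus A^2_{(2)}(X)$ with $A^2_{(0)}(X)$ spanned by $A^1(X)\cdot A^1(X)$ and by the classes $c_k$, all of which lie in $NS$-generated pieces; since $\pi^X_{2,1}$ acts on $A^2(X)$ by $a\mapsto\sum_k (E_k^\vee\cdot a)\,E_k$, its image lies in the span of the $E_k$, i.e.\ in $A^2_{(0)}(X)$, and it acts as the identity there after the orthonormalisation. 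Therefore $(\pi^X_{2,0})_\ast A^2(X)=(\Pi^X_2)_\ast A^2(X)\ominus (\pi^X_{2,1})_\ast A^2(X)=A^2_{(2)}(X)$. The main obstacle I anticipate is bookkeeping around the precise interaction of $\pi^X_{2,1}$ with the Fourier decomposition — specifically making sure that the chosen lifts $E_k^\vee\in A^3(X)$ and the duality normalisation are compatible with $A^2_{(0)}(X)$ being exactly the $NS$-generated part, so that no $A^2_{(2)}$ leaks into the image of $\pi^X_{2,1}$; this is where one needs to cite the detailed structure of $A^\ast_{(\ast)}(X)$ from \cite{SV} rather than argue abstractly.
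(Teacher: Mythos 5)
Your construction is genuinely different from the paper's: you try to build $\pi^X_{2,1}$ directly from a basis of $NS(X)$ and Poincar\'e duals, whereas the paper transports Pedrini's decomposition $\pi^Y_4=\pi^Y_{4,tr}+\pi^Y_{4,alg}$ from the cubic $Y$ to $X$ via the Abel--Jacobi correspondences $P$ and $Q$. That route avoids most of the difficulties you run into, because the algebraicity needed on the $Y$-side is supplied by the Lefschetz $(1,1)$-theorem, the support of the correction term comes from Lieberman's lemma, and the Chow-group claim is then a one-line dimension count. Your proposal, by contrast, contains several real gaps.

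First, the orientation of your projector is wrong. With $E_k\in A^1(X)$ and $E_k^\vee\in A^3(X)$, the correspondence $\sum_k E_k\times E_k^\vee$ acts covariantly on $H^6(X)$ (and on $A^3$), not on $H^2(X)$: for $\Gamma_\ast(\alpha)$ to be nonzero one needs $\alpha\cdot E_k\in A^4$, i.e.\ $\alpha$ in degree $3$. To project $H^2(X)$ onto $NS(X)$ you must take $\sum_k E_k^\vee\times E_k$. Second, your justification for the algebraicity of the Poincar\'e duals is not a justification: injectivity of the cycle class map on $A^3$ says nothing about whether a given class in $H^6(X)$ is represented by an algebraic cycle. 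What you actually need is the Lefschetz standard conjecture for $X$ (true for $K3^{[2]}$-type by Charles--Markman), and you should cite it. Third, you silently conflate two pairings: $H^2_{tr}(X)$ is by definition the Beauville--Bogomolov-orthogonal complement of $NS(X)$, while your dual basis is Poincar\'e-dual. These orthogonal complements do coincide for a hyperk\"ahler fourfold (by the Fujiki relations), but that is a nontrivial compatibility you would need to spell out.

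Fourth, and most seriously for the Chow-group assertion, your computation of the action of $\pi^X_{2,1}$ on $A^2(X)$ is incorrect. You write $a\mapsto\sum_k(E_k^\vee\cdot a)\,E_k$, but $E_k^\vee\cdot a\in A^5(X)=0$ since $\dim X=4$; the action is simply zero, not ``the identity on the $NS$-generated part after orthonormalisation.'' This is actually what one wants, and it follows from the trivial dimension observation that a correspondence supported on (curve)$\times$(divisor) acts as zero on $A^2$ --- which is exactly the paper's argument and renders your detour through the structure of $A^2_{(0)}(X)$ unnecessary and misleading. Finally, you assert that $\pi^X_{2,0}$ and $\pi^X_{2,1}$ are mutually orthogonal idempotents, arguing in particular that $\pi^X_{2,1}\circ\Pi^X_2=\Pi^X_2\circ\pi^X_{2,1}=\pi^X_{2,1}$; with your (mis-oriented) definition, $\Pi^X_2\circ\pi^X_{2,1}$ factors through $(\Pi^X_2)_\ast$ on $H^6(X)$, which is zero, so that identity fails. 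The paper explicitly declines to claim idempotency or orthogonality of the refined pieces, and indeed does not need either, so this part of your argument is both unnecessary and broken.
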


  \begin{proof} The existence of a CK decomposition is theorem \ref{sv}. 
  
  To define the refined decomposition, let $\pi_4^Y\in A^4(Y\times Y)$ be a CK projector of $Y$. Pedrini \cite[Section 4]{Ped} has constructed a decomposition
   \[  \pi_4^Y = \pi^Y_{4,tr} + \pi^Y_{4,alg}\ \ \ \hbox{in}\ A^4(Y\times Y)\ ,\]
   where $\pi^Y_{4,tr}, \pi^Y_{4,alg}$ are mutually orthogonal idempotents, and
   \[ (\pi^Y_{4,tr})_\ast H^\ast(Y) = H^4_{tr}(Y)\ ,\]
   where $H^4_{tr}(Y)$ is by definition the orthogonal complement (under the cup product) of $N^2 H^4(Y)$.
  
  To obtain a similar splitting for $X$, one uses the following: let $P\in A^3(X\times Y)$ be the universal family of lines. Then
   \[ ({}^t P)_\ast\colon\ \ \ H^4(Y)\ \to\ H^2(X) \]
   is an isomorphism, the {\em Abel--Jacobi isomorphism\/} \cite{BD}. Moreover, there is a correspondence $Q\in A^5(X\times Y)$ inducing the inverse isomorphism, i.e.
    \[  \Pi^X_2 =     {}^t P \circ \pi_4^Y \circ Q \ \ \ \hbox{in}\ H^8(X\times X)\ .\]
    (An explicit formula for $Q$ is $Q=-{1\over 6} P\circ \Gamma_{g^2}$, where $g\in A^1(X)$ is the Pl\"ucker polarization, and $\Gamma_{g^2}\in A^6(X\times X)$ is the correspondence acting as multiplication with $g^2$. This follows from \cite[Proof of Proposition 6]{BD}.)
    
    We now define
     \[  \begin{split}  \pi^X_{2,1} &:=   {}^t P \circ \pi_{4,alg}^Y \circ Q  \ ,\\
                               \pi^X_{2,0}&:=  \Pi^X_2-\pi^X_{2,1}   \ \ \ \ \ \ \in A^4(X\times X)\ .\\
                               \end{split}\]
                               
            Lieberman's lemma \cite[Lemma 3.3]{V3} gives an equality
        \[ \pi^X_{2,1}= ({}^t Q\times {}^t P)_\ast \pi_{4,alg}^Y\ \ \ \hbox{in}\ A^4(X\times X)  \ .\]  
        This implies that $\pi^X_{2,1}$ is supported on $C\times D\subset X\times X$, where $C\subset X$ is the curve
        \[ p_X \bigl( \hbox{Supp} \bigl( (p_Y)^\ast(V)\cdot ({}^t Q)\bigr)\bigr)\ \ \ \subset X\ ,\]
        and $D\subset X$ is the divisor
        \[ p_X \bigl( \hbox{Supp} \bigl( (p_Y)^\ast(V)\cdot ({}^t P)\bigr)\bigr)\ \ \ \subset X\ .\]
        
        For reasons of dimension, $\pi^X_{2,1}$ acts trivially on $A^2(X)$, and so
        \[ (\pi^X_{2,0})_\ast A^2(X) =  (\Pi^X_2)_\ast A^2(X)=   A^2_{(2)}(X)\ .\]

            Since (by construction) the projector $\pi^Y_{4,alg}$ is supported on $V\times V$, for some closed codimension $2$ subvariety $V\subset Y$,  
            
           Finally, it is known that the Abel--Jacobi isomorphism is an isometry \cite{BD}, and so 
           \[ ({}^t P)_\ast H^4_{tr}(Y)= H^2_{tr}(X)\ . \]
                           
                                \end{proof}

         \begin{remark} We do {\em not\/} claim that the correspondences $\pi^X_{2,0}, \pi^X_{2,1}$ are mutually orthogonal and idempotent ! This can presumably be achieved, but we do not need this.
           \end{remark}

\subsection{A multiplicative result}

Let $X$ be the Fano variety of lines on a smooth cubic fourfold. As we have seen (theorem \ref{sv}), the Chow ring of $X$ splits into pieces $A^i_{(j)}(X)$.
The magnum opus \cite{SV} contains a detailed analysis of the multiplicative behaviour of these pieces. Here are the relevant results we will be needing:

\begin{theorem}[Shen--Vial \cite{SV}]\label{mult} Let $Y\subset\PP^5(\C)$ be a smooth cubic fourfold, and let $X:=F(Y)$ be the Fano variety of lines in $Y$. 

\noindent
(\rom1) There exists $\ell\in A^2_{(0)}(X)$ such that intersecting with $\ell$ induces an isomorphism
  \[ \cdot\ell\colon\ \ \ A^2_{(2)}(X)\ \xrightarrow{\cong}\ A^4_{(2)}(X)\ .\]
  The inverse isomorphism is given by 
   \[ {1\over 25} L_\ast\colon\ \ \ A^4_{(2)}(X)\ \xrightarrow{\cong}\ A^2_{(2)}(X)\ ,\]
  where $L\in A^2(X\times X)$ is the class defined in \cite[Equation (107)]{SV}.
  
\noindent
(\rom2) Intersection product induces a surjection
  \[ A^2_{(2)}(X)\otimes A^2_{(2)}(X)\ \twoheadrightarrow\ A^4_{(4)}(X)\ .\]
\end{theorem}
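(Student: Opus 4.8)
The plan is to deduce Theorem \ref{mult}(ii) from the surjectivity of the full intersection product $A^2(X)\otimes A^2(X)\to A^4(X)$, and to prove the latter by exhibiting enough surfaces on $X$ --- the incidence surfaces attached to curves on $Y$ --- and controlling their pairwise intersections; this follows the cycle-theoretic analysis of \cite{SV}.

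For the reduction: by the Fourier decomposition $A^2(X)=A^2_{(0)}(X)\oplus A^2_{(2)}(X)$ (Theorem \ref{sv}) together with Theorem \ref{mult}(i), intersection product sends $A^2_{(0)}(X)\otimes A^2_{(0)}(X)$ into $A^4_{(0)}(X)$ and hits its generator $g^4$, sends $A^2_{(0)}(X)\otimes A^2_{(2)}(X)$ onto $A^4_{(2)}(X)$ (pair with the class $\ell$ of Theorem \ref{mult}(i)), and sends $A^2_{(2)}(X)\otimes A^2_{(2)}(X)$ into $A^4_{(4)}(X)$ (an inclusion that holds for every smooth $Y$, cf. \cite{SV}). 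Hence Theorem \ref{mult}(ii) is equivalent to the statement that every $0$-cycle on $X$ is a $\QQ$-linear combination of products of two surfaces on $X$.

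To produce surfaces I would use the incidence surfaces $\Phi_Z:=\{\ell\in X\ \vert\ \ell\cap Z\neq\emptyset\}\subset X$, for $Z\subset Y$ a curve --- equivalently, the cycles $({}^{t}P)_\ast(Z)$, where $P\in A^3(X\times Y)$ is the universal line of Theorem \ref{pi20}. Together with $g^2$ and $c$ the classes $[\Phi_Z]$ generate $A^2(X)$, hence they generate $A^2(X)$ modulo $A^2_{(0)}(X)$; so it suffices to evaluate the $0$-cycles $[\Phi_Z]\cdot[\Phi_{Z'}]\in A^4(X)$ and their projections to $A^4_{(4)}(X)$ under $\Pi^X_4$. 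Geometrically $\Phi_Z\cap\Phi_{Z'}$ is the locus of lines of $Y$ meeting both $Z$ and $Z'$, a finite set for $Z,Z'$ disjoint and generic, so $[\Phi_Z]\cdot[\Phi_{Z'}]$ is the associated $0$-cycle (with intersection multiplicities, and with an excess term from the normal bundle of $\Phi_Z$ in $X$ when $Z=Z'$). One organises this via Shen--Vial's quadratic relations in $A^\ast(X\times X)$ among the incidence correspondence $I$, the Fourier kernel $L$ of \cite[Equation (107)]{SV}, and the tautological classes $g_i,c_i$ --- notably the one computing $I\circ I$. Since $I_\ast=-L_\ast$ on $A^4_{hom}(X)$ (the other terms occurring in the definition of $L$ act trivially on homologically trivial $0$-cycles), Theorem \ref{mult}(i) makes the $A^4_{(2)}(X)$-component of $[\Phi_Z]\cdot[\Phi_{Z'}]$ transparent; what remains is its $A^4_{(4)}(X)$-component. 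For that one uses that $X$ is of $K3^{[2]}$-type, so $H^4(X)=\operatorname{Sym}^2 H^2(X)$ and the transcendental part of $H^4(X)$ is the image of $\operatorname{Sym}^2 H^2_{tr}(X)$ --- which rules out any cohomological obstruction --- and transporting this through the Abel--Jacobi isomorphism, together with the explicit multiplicities, yields surjectivity onto $A^4_{(4)}(X)$.

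The hard part is precisely the last step: proving that the products of incidence surfaces \emph{span}, and not merely land in, $A^4_{(4)}(X)$. The ``land in'' direction is a formal consequence of multiplicativity of the Chow--K\"unneth decomposition and is in essence cohomological; but surjectivity onto $A^4_{(4)}(X)$ concerns a group of homologically trivial $0$-cycles, so cannot be checked in cohomology, and it is false for a general fourfold. It holds for $X=F(Y)$ only because of the wealth of surfaces on $X$ arising from curves on $Y$, and because the relevant intersection multiplicities can be pinned down --- which is exactly what the detailed computations of \cite{SV} accomplish. (For intuition: when $Y$ is special enough that $X$ is birational to $\operatorname{Hilb}^2(S)$ for a $K3$ surface $S$, the elements of $A^2_{(2)}(X)$ correspond to $2$-point configurations with one transcendental point and those of $A^4_{(4)}(X)$ to configurations with two transcendental points, and the surjectivity is then transparent in that model.)
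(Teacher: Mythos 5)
The paper's own ``proof'' of this theorem is a one-line citation: statement~(i) is \cite[Theorem~4]{SV} and statement~(ii) is \cite[Proposition~20.3]{SV}. You have instead tried to reconstruct an argument for~(ii) from scratch, which is a genuinely different (and far more ambitious) route; unfortunately it breaks down already at the reduction step.

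Your reduction from $A^2_{(2)}(X)\otimes A^2_{(2)}(X)\twoheadrightarrow A^4_{(4)}(X)$ to $A^2(X)\otimes A^2(X)\twoheadrightarrow A^4(X)$ rests on the assertion that the intersection product sends $A^2_{(0)}(X)\otimes A^2_{(0)}(X)$ into $A^4_{(0)}(X)$. This is exactly the open problem that Remark~\ref{pity} in the present paper flags: it is \emph{not} known, for an arbitrary smooth cubic fourfold $Y$, that $A^2_{(0)}(X)\cdot A^2_{(0)}(X)\subset A^4_{(0)}(X)$; the Fourier decomposition is only proved to give a bigraded ring structure when $Y$ is very general (theorem~\ref{sv}). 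Without that inclusion --- or at least the weaker but equally unestablished statement that $A^2_{(0)}(X)\cdot A^2_{(0)}(X)$ misses $A^4_{(4)}(X)$ --- one cannot pass from surjectivity of the full product map to surjectivity of its $(2)\otimes(2)\to(4)$ piece: nothing rules out that products $\beta\cdot\gamma$ with $\beta,\gamma\in A^2_{(0)}(X)$ contribute to $A^4_{(4)}(X)$. This is in fact why Shen--Vial prove Proposition~20.3 directly, for every smooth $Y$, by explicit cycle-theoretic relations rather than by appealing to multiplicativity; your reduction inverts their logical order and so cannot be made to work.

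Beyond this, the step that would have to carry the weight --- surjectivity of $A^2(X)\otimes A^2(X)\to A^4(X)$ via incidence surfaces $\Phi_Z$ and the quadratic relations among $I$, $L$, $g_i$, $c_i$ --- is left as a sketch, as you acknowledge. Since the target is a statement about rational equivalence of $0$-cycles (and $A^4_{(4)}(X)$ consists of homologically trivial classes), the remark that $H^4(X)$ is the symmetric square of $H^2(X)$ and hence poses ``no cohomological obstruction'' cannot close this gap. Given the scope of what a self-contained proof would require (essentially reproducing a large portion of \cite{SV}), the right move here is simply to cite \cite[Theorem~4]{SV} and \cite[Proposition~20.3]{SV}, as the paper does.
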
 
     
 \begin{proof} Statement (\rom1) is \cite[Theorem 4]{SV}. Statement (\rom2) is \cite[Proposition 20.3]{SV}.
  \end{proof}

    The following is a reformulation of theorem \ref{mult}(\rom1). 
    
   \begin{proposition}\label{withI} Let $Y\subset\PP^5(\C)$ be a smooth cubic fourfold, and let $X$ be the Fano variety of lines in $Y$.
   Let $I\in A^2(X\times X)$ be the incidence correspondence, and let $g=-c_1(\EE_2)\in A^1(X)$ be the Pl\"ucker polarization. Then
   \[ \cdot g^2\colon\ \ \ A^2_{(2)}(X)\ \xrightarrow{}\ A^4_{(2)}(X)\ \]
   is an isomorphism. The inverse isomorphism is given by
    \[ -{1\over 6} I_\ast\colon\ \ \ A^4_{(2)}(X)\ \to\ A^2_{(2)}(X)\ .\]
        \end{proposition}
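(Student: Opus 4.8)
The plan is to deduce the proposition from theorem~\ref{mult}(\rom1) by translating the pair $(\ell,L)$ appearing there into the pair $(g^2,I)$. Recall from \cite[Equation~(107)]{SV} that
\[ L={1\over 3}\bigl(g_1^2+{3\over 2}g_1g_2+g_2^2-c_1-c_2\bigr)-I\ \ \ \in A^2(X\times X)\ ,\]
where $g_i=p_i^\ast g$, $c_i=p_i^\ast c$ with $c=c_2(\EE_2)\in A^2(X)$, and that $\ell=i_\Delta^\ast(L)\in A^2(X)$. The first (and easy) step is to note that the ``polarized'' summand $L_0:={1\over3}(g_1^2+{3\over2}g_1g_2+g_2^2-c_1-c_2)$ acts as zero on $A^4_{hom}(X)$: for a homologically trivial $0$--cycle $z$ one has $(p_1^\ast g^2)_\ast z=(p_1^\ast c)_\ast z=0$ because $g^2\cdot z,\,c\cdot z\in A^6(X)=0$, one has $(p_1^\ast g\cdot p_2^\ast g)_\ast z=0$ because $g\cdot z\in A^5(X)=0$, and $(p_2^\ast g^2)_\ast z=(\deg z)\,g^2=0$, $(p_2^\ast c)_\ast z=(\deg z)\,c=0$ since $z$ is homologically trivial. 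Hence $L_\ast=-I_\ast$ on $A^4_{hom}(X)$, and a fortiori on $A^4_{(2)}(X)\subset A^4_{hom}(X)$; combined with theorem~\ref{mult}(\rom1) this already shows that the inverse of $\cdot\ell\colon A^2_{(2)}(X)\xrightarrow{\ \cong\ }A^4_{(2)}(X)$ equals $-{1\over25}I_\ast$.

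It then remains to establish the identity $g^2\cdot a={6\over25}\,\ell\cdot a$ in $A^4(X)$ for all $a\in A^2_{(2)}(X)$: granting this, $\cdot g^2$ is ${6\over25}$ times the isomorphism $\cdot\ell$, hence is itself an isomorphism $A^2_{(2)}(X)\xrightarrow{\ \cong\ }A^4_{(2)}(X)$ with inverse ${25\over6}\cdot\bigl(-{1\over25}I_\ast\bigr)=-{1\over6}I_\ast$. To prove the identity I would exploit the Beauville--Donagi correspondences used in the proof of theorem~\ref{pi20}. Writing $Q=-{1\over6}P\circ\Gamma_{g^2}\in A^5(X\times Y)$, one has by \cite{BD} the relation $\Pi^X_2={}^tP\circ\pi^Y_4\circ Q$ with ${}^tP_\ast$ the Abel--Jacobi isomorphism; since $(\Pi^X_2)_\ast$ acts as the identity on $A^2_{(2)}(X)$ (theorem~\ref{sv}), applying this to $a\in A^2_{(2)}(X)$ gives $a=-{1\over6}({}^tP)_\ast(\pi^Y_4)_\ast P_\ast(g^2\cdot a)$. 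Using that $\pi^Y_4$ may be replaced by the identity without altering the action on $A^2_{(2)}(X)$ (its ``algebraic'' part $\pi^Y_{4,alg}$ is supported on $V\times V$ and contributes only a correspondence of the type of $\pi^X_{2,1}$, which kills $A^2(X)$), and that ${}^tP\circ P$ differs from $I$ by a polarized correspondence --- hence by one vanishing on $A^4_{hom}(X)\ni g^2\cdot a$ --- one arrives at $-{1\over6}I_\ast(g^2\cdot a)=a$, which is the desired identity. Alternatively one may try to verify $g^2\cdot a={6\over25}\ell\cdot a$ directly from $\ell=i_\Delta^\ast L={7\over6}g^2-{2\over3}c-i_\Delta^\ast(I)$, using \cite{SV}'s formula for $i_\Delta^\ast(I)$ and its analysis of the action of $c$ on $A^2_{(2)}(X)$.

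I expect this last comparison to be the main obstacle. The relation $\Pi^X_2={}^tP\circ\pi^Y_4\circ Q$ of \cite{BD} is a priori only an equality in $H^8(X\times X)$, so some care is needed to see that the resulting identity holds in $A^2(X)$ for $a\in A^2_{(2)}(X)$, and one must keep track of the precise constant: one has to verify that ${}^tP\circ P=I+(\hbox{polarized terms})$ with leading coefficient exactly $1$ and that inserting or deleting $\pi^Y_4$ genuinely does not change the action on $A^2_{(2)}(X)$. Once the relevant correspondence identities are pinned down modulo correspondences acting trivially on $A^2_{(2)}(X)$, the proposition follows.
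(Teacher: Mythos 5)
Your first step — checking, term by term, that the ``polarized'' summand $L_0={1\over3}(g_1^2+{3\over2}g_1g_2+g_2^2-c_1-c_2)$ acts as zero on $A^4_{hom}(X)$, so that $L_\ast=-I_\ast$ there — is correct and is a useful observation; combined with theorem~\ref{mult}(\rom1) it does show $-{1\over25}I_\ast$ inverts $\cdot\ell$ on $A^2_{(2)}(X)$. But the crux of the proposition is the passage from $\ell$ to $g^2$, i.e.\ your identity $g^2\cdot a={6\over25}\,\ell\cdot a$ for $a\in A^2_{(2)}(X)$, and that step is left genuinely open in your write-up. Your first sketch of it (via $\Pi^X_2={}^tP\circ\pi^Y_4\circ Q$) cannot work as stated: that relation holds only in $H^8(X\times X)$, and since $a\in A^2_{(2)}(X)$ is homologically trivial, a cohomological identity applied to $a$ gives no information in $A^\ast(X)$ — one would need to control the Chow-theoretic error term $\Pi^X_2-{}^tP\circ\pi^Y_4\circ Q$, which you do not do. Your second sketch (expanding $\ell=i_\Delta^\ast L={7\over6}g^2-{2\over3}c-i_\Delta^\ast(I)$ and analysing the action of $c$ and $i_\Delta^\ast(I)$ on $A^2_{(2)}(X)$) is plausible but is not carried out either, so as written there is a gap.

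The paper sidesteps exactly this difficulty by importing a ready-made identity from \cite{SV}: for all $a\in A^4_{hom}(X)$ one has $\ell\cdot L_\ast(a)=-{25\over6}\,g^2\cdot I_\ast(a)$ in $A^4(X)$ (this follows from \cite[Equations~(107) and~(108)]{SV}, cf.\ the proof of \cite[Proposition~19.4]{SV}). Combining this with $\ell\cdot L_\ast(a)=25a$ for $a\in A^4_{(2)}(X)$ gives $a=-{1\over6}g^2\cdot I_\ast(a)$ directly, and applying $I_\ast$ together with $I_\ast A^4_{(2)}(X)=A^2_{(2)}(X)$ finishes the proof. Note that this very identity from \cite{SV}, combined with your step~1 ($L_\ast=-I_\ast$ on $A^4_{hom}$), is precisely what yields the missing relation $g^2\cdot b={6\over25}\ell\cdot b$ for $b\in A^2_{(2)}(X)$; so you could close the gap by citing it. Without it (or an equivalent computation), the proof is incomplete.
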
 
        
     \begin{proof} This is implicit in the arguments of \cite{SV}. For any $a\in A^4_{hom}(X)$, there is equality
      \[ \ell\cdot L_\ast(a)=-{25\over 6} g^2\cdot I_\ast(a)\ \ \ \hbox{in}\ A^4(X)\ .\]
      (This follows from \cite[Equations (107) and (108)]{SV}, cf. the proof of \cite[Proposition 19.4]{SV}.)
      But for $a\in A^4_{(2)}(X)$, we know (theorem \ref{mult}(\rom2)) that $\ell\cdot L_\ast(a)=25a$, and so for any $a\in A^4_{(2)}(X)$ we get an equality
      \[ a= -{1\over 6} g^2\cdot I_\ast(a)\ \ \ \hbox{in}\ A^4(X)\ .\]
      Applying $I_\ast$ to this equality, we obtain an equality (for any $a\in A^4_{(2)}(X)$) 
      \begin{equation}\label{Iast} I_\ast(a) =-{1\over 6} I_\ast(g^2\cdot I_\ast(a))\ \ \ \hbox{in}\ A^2(X)\ .\end{equation}
      But we know that
       \[   A^2_{(2)}(X) = I_\ast A^4_{hom}(X)= I_\ast A^4_{(2)}(X)\ .\]
       (Here, the first equality is \cite[Proof of Proposition 21.10]{SV}, and the second equality follows from the fact that $I_\ast A^4_{(4)}(X)=0$ 
       \cite[Theorem 20.5]{SV}.)
     Equation (\ref{Iast}) thus becomes the statement that for any $b\in A^2_{(2)}(X)$ there is equality
      \[ b= -{1\over 6} I_\ast(g^2\cdot b)\ \ \ \hbox{in}\ A^2(X) \ .  \]
      This proves the proposition.
            \end{proof}

  \begin{corollary}\label{decomp} Let $Y\subset\PP^5(\C)$ be a smooth cubic fourfold, and let $X$ be the Fano variety of lines in $Y$. There exist correspondences
  $P\in A^3(X\times Y)$, $\Psi\in A^5(Y\times X)$ such that
    \[ \begin{split} ({}^t P\circ {}^t \Psi)_\ast&=\ide\colon\ \ \ A^2_{(2)}(X)\ \to\ A^2_{(2)}(X)\ ,\\
                           ( {} \Psi\circ P)_\ast&=\ide\colon\ \ \ A^4_{(2)}(X)\ \to\ A^4_{(2)}(X)\ .\\ 
                      \end{split}\]   
         
         Moreover, if $\YY\to B$ and $\XX\to B$ denote the universal families as in notation \ref{not}, there exist relative correspondences $\PPP\in A^3(\XX\times_B \YY)$ and $\Psi\in A^5(\YY\times_B \XX)$ with the above property on each fibre: i.e., for each $b\in B$ we have
         \[ \begin{split} \bigl( ({}^t \PPP\circ {}^t \Psi)\vert_{X_b\times X_b}\bigr){}_\ast   &=(\Pi_2^{X_b})_\ast\colon\ \ \ A^2(X_b)\ \to\ A^2(X_b)\ ,\\   
\bigl( (\Psi\circ \PPP)\vert_{X_b\times X_b}\bigr){}_\ast   &=(\Pi_6^{X_b})_\ast\colon\ \ \ A^4(X_b)\ \to\ A^4(X_b)\ .\\
    \end{split}\]  
                           
  \end{corollary}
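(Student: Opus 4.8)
The plan is to realize (parts of) the Chow--K\"unneth projectors $\Pi^X_2$ and $\Pi^X_6$ as correspondences factoring through $Y$, by means of the universal family of lines. First I would take $P\in A^3(X\times Y)$ to be the class of the universal family of lines (as in theorem \ref{pi20}) and record the associated correspondence $Q:=-{1\over6}\,P\circ\Gamma_{g^2}\in A^5(X\times Y)$ of that theorem; here $g\in A^1(X)$ is the Pl\"ucker polarization and $\Gamma_{g^2}\in A^6(X\times X)$ induces multiplication by $g^2$, so that ${}^t\Gamma_{g^2}=\Gamma_{g^2}$. The geometric input is the identity
\[ {}^tP\circ P = I\ \ \ \hbox{in}\ A^2(X\times X)\ ,\]
where $I$ is the incidence correspondence: the fibre product $P\times_Y{}^tP$ is of pure dimension $6=\dim I$ and maps birationally onto $I$, the remaining locus being supported over the diagonal and hence of dimension $<6$ (alternatively one invokes \cite{BD}).

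Setting $\Psi:={}^tQ=-{1\over6}\,\Gamma_{g^2}\circ{}^tP\in A^5(Y\times X)$ and using proposition \ref{withI}, I would then obtain
\[ (\Psi\circ P)_\ast = (\cdot\, g^2)\circ\bigl(-{1\over6}I_\ast\bigr)=\ide\colon\ \ A^4_{(2)}(X)\to A^4_{(2)}(X)\ ,\]
and, transposing and using ${}^tI=I$, ${}^t\Gamma_{g^2}=\Gamma_{g^2}$, ${}^t\Psi=Q$,
\[ ({}^tP\circ{}^t\Psi)_\ast = ({}^tP\circ Q)_\ast = \bigl(-{1\over6}I_\ast\bigr)\circ(\cdot\, g^2)=\ide\colon\ \ A^2_{(2)}(X)\to A^2_{(2)}(X)\ .\]
This already gives the first (absolute) assertion, with $\Psi$ as above.

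For the sharper fibrewise statement one needs these compositions to act on $A^4(X_b)$, resp.\ $A^2(X_b)$, as the full maps $(\Pi^{X_b}_6)_\ast$, resp.\ $(\Pi^{X_b}_2)_\ast$, which by proposition \ref{relck} vanish on $A^4_{(0)}\oplus A^4_{(4)}$, resp.\ on $A^2_{(0)}$. The piece $A^4_{(4)}$ is harmless, since $I_\ast$ annihilates it (\cite[Theorem 20.5]{SV}); the algebraic pieces $A^4_{(0)}=\QQ\,\oo_X$ and $A^2_{(0)}$, on which $\Psi\circ P$ (with $\Psi={}^tQ$) sends $\oo_X$ to $v:=-{1\over6}\,g^2\cdot I_\ast(\oo_X)$, I would handle by replacing $\Psi$ with
\[ \Psi\ :=\ {}^tQ\ -\ h_Y\times v\ +\ {1\over6}\,[\hbox{line}]\times g^2\ \ \ \in A^5(Y\times X)\ ,\]
$h_Y\in A^1(Y)$ and $[\hbox{line}]\in A^3(Y)$ the hyperplane and line classes. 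Using the (elementary) identities $({}^tP)_\ast(h_Y)=[X]$ and $({}^tP)_\ast[\hbox{line}]=I_\ast(\oo_X)$, one checks that the two correction terms compose with $P$ (and, after transposition, with ${}^tP$) to degenerate correspondences that kill the algebraic pieces while leaving $A^i_{(2)}$ untouched. I expect this last bookkeeping to be the main obstacle: matching the factored correspondences with the full projectors $\Pi^{X_b}_2,\Pi^{X_b}_6$ --- and not merely with their action on the transcendental part --- requires controlling $-{1\over6}I_\ast(g^2\cdot -)$ on $A^2_{(0)}(X)$, where for an arbitrary cubic fourfold one cannot invoke the multiplicativity $A^2_{(0)}\cdot A^2_{(0)}\subset A^4_{(0)}$ (cf.\ remark \ref{pity}).

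Finally, the relative statement is essentially automatic: the universal family of lines $\PPP\in A^3(\XX\times_B\YY)$, the relative Pl\"ucker class and hence $\Gamma_{g^2}$, the relative incidence correspondence $I\in A^2(\XX\times_B\XX)$, the relative class $\oo$, and the relative hyperplane and line classes on $\YY$ all exist over $B$, exactly as in the proof of proposition \ref{relck}. Hence $\Psi$ is defined relatively, its restriction to a fibre $b$ is the absolute construction for $X_b=F(Y_b)$, and proposition \ref{relck} identifies the maps computed above with $(\Pi^{X_b}_2)_\ast$ and $(\Pi^{X_b}_6)_\ast$, yielding the asserted fibrewise equalities.
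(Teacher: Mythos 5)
Your construction of $P$ (universal family of lines), the identity ${}^tP\circ P=I$ (from \cite[Lemma 17.2]{SV}), the choice $\Psi={}^tQ=-\tfrac{1}{6}\Gamma_{g^2}\circ{}^tP$, and the appeal to proposition~\ref{withI} to get the two identities on $A^2_{(2)}$ and $A^4_{(2)}$ (the second by transposing, using $\Pi^X_6={}^t\Pi^X_2$) coincide exactly with the paper's argument. So for the first, absolute, part of the corollary you have essentially reproduced the paper's proof.

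Where you diverge is in the ``moreover'' part, and here you are being \emph{more} scrupulous than the source. The paper's entire justification of the fibrewise statement is the single sentence that $P$ and $\Gamma_{g^2}$ obviously exist relatively, hence so does $\Psi$; it does not address whether the restricted correspondences really act on all of $A^2(X_b)$, resp.\ $A^4(X_b)$, as the \emph{full} projectors $\Pi^{X_b}_2$, resp.\ $\Pi^{X_b}_6$ (i.e.\ vanish on $A^2_{(0)}$ and on $A^4_{(0)}\oplus A^4_{(4)}$). You noticed this correctly: the vanishing on $A^4_{(4)}$ is indeed forced by $I_\ast A^4_{(4)}=0$, the $A^4_{(0)}$ piece can be killed by a decomposable correction term, but the vanishing of $-\tfrac16 I_\ast(g^2\cdot{-})$ on $A^2_{(0)}$ would require the unproven multiplicativity $A^2_{(0)}\cdot A^2_{(0)}\subset A^4_{(0)}$ (remark~\ref{pity}). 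In other words, the gap you flag is genuine and is present in the paper's own proof as well; what actually survives, and what the proof of theorem~\ref{main} in the paper can rely on, is the equality on the pieces $A^2_{(2)}$ and $A^4_{(2)}$, which your $\Psi$ does deliver. Two small slips in your writeup: the correction terms you subtract off have mis-stated degrees (as written, ``$({}^tP)_\ast(h_Y)=[X]$'' is not homogeneous in codimension, so the intended identities need to be restated), and the concluding appeal to proposition~\ref{relck} does not by itself identify $\Psi\circ P$ with $\Pi^{X_b}_6$, since proposition~\ref{relck} only says the \emph{relative} projectors $\Pi^\XX_i$ restrict correctly, not that your factored correspondence agrees with them.
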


  \begin{proof} The correspondence $P\in A^3(X\times Y)$ is defined as the universal family of lines on $Y$. Letting $I\subset X\times X$ denote the incidence correspondence, we have
  \[ I={}^t P\circ P\ \ \ \hbox{in}\ A^2(X\times X) \]
  \cite[Lemma 17.2]{SV}. 
  
  Proposition \ref{withI} states that the composition
    \[   A^2_{(2)}(X)\ \xrightarrow{\cdot g^2}\ A^4_{(2)}(X)   \ \xrightarrow{-{1\over 6}({}^t P\circ P)_\ast}\ A^2_{(2)}(X) \]
    is the identity, in other words
    \[       \bigl( -{1\over 6} {}^t P\circ P\circ \Gamma_{g^2}\bigr){}_\ast=\ide\colon\ \ \  A^2_{(2)}(X)\ \to\ A^2_{(2)}(X)  \ .\]
    (Here $\Gamma_{g^2}\in A^6(X\times X)$ can be defined as ${1\over d}\, {}^t \Gamma_\tau\circ\Gamma_\tau$, where $\tau\colon R\to X$ denotes the inclusion of a smooth complete intersection of class $d g^2$).
    Defining 
     \[ {}^t \Psi:= -{1\over 6} P\circ \Gamma_{g^2}\ \ \ \in\ A^5(X\times Y)\ ,\]
     we obtain the first equality of corollary \ref{decomp}. 
     
     For the second equality, it suffices to take the transpose of the first equality, since we know that $\Pi^X_6={}^t \Pi^X_2$ (theorem \ref{sv}).
   
   As to the ``moreover'' part of the corollary: obviously both $P$ and $\Gamma_{g^2}$ exist relatively, and so the same goes for $\Psi$.

    \end{proof}

 \section{A family of triple covers}
 
 In this section, we consider cubic fourfolds that are triple covers over cubic threefolds. The main result here is as follows:
 
 \begin{theorem}\label{main0} Let $Y\subset\PP^5(\C)$ be a smooth cubic fourfold defined by an equation
    \[ f(X_0,\ldots,X_4)+ (X_5)^3=0\ ,\]
    where $f$ is a homogeneous polynomial of degree $3$. Let $X=F(Y)$ be the Fano variety of lines in $Y$. 
    Let $\sigma\in\aut(X)$ be the (non--symplectic) order $3$ automorphism induced by
    \[  \begin{split} \PP^5(\C)\ &\to\ \PP^5(\C)\ ,\\
                         [X_0:\ldots:X_5]\ &\mapsto\ [X_0:X_1:X_2:X_3: X_4:\nu X_5]\\
                         \end{split}\]
    (where $\nu$ is a $3$rd root of unity). 
    
    Then
    \[ (\ide +\sigma+\sigma^2)_\ast \ A^4_{hom}(X)=0\ .\]
 
 \end{theorem}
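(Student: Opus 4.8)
The strategy is to exploit the fact that $Y$ is a triple cover: the projection $\pi\colon Y\dashrightarrow \PP^4$, $[X_0:\cdots:X_5]\mapsto[X_0:\cdots:X_4]$, exhibits $Y$ as a cyclic triple cover of $\PP^4$ branched along the cubic threefold $T=\{f=0\}\subset\PP^4$, with $\sigma_Y$ the deck transformation. The point of entry is that finite-dimensional motives behave well under such covers: $h(Y)$ decomposes into $\sigma_Y$-eigenmotives, and the invariant part $h(Y)^{\langle\sigma_Y\rangle}$ is (a twist of) the motive of $\PP^4$ together with a piece governed by $T$. Crucially, cubic threefolds have finite-dimensional motive (their motive is of abelian type, being dominated by the intermediate Jacobian / governed by a Fano surface of lines which has finite-dimensional motive), and $\PP^4$ trivially does; hence $h(Y)$ is finite-dimensional, and therefore so is $h(X)=h(F(Y))$, since the motive of $F(Y)$ is a direct summand of a sum of tensor powers of $h(Y)$ (the Galkin–Shinder / de Cataldo–Migliorini type relation, or simply via the incidence correspondences $P,Q$ of Theorem~\ref{pi20}).

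**Key steps, in order.** First I would record the cyclic-cover decomposition $h(Y)=h(\PP^4)\oplus \mathfrak{t}$ where $\mathfrak t$ is the "transcendental-type" complement, and identify $\mathfrak t$ with an eigenmotive of the triple cover so that $(\ide+\sigma_Y+\sigma_Y^2)_\ast$ kills $\mathfrak t$ on cohomology in the relevant degrees — this is exactly the non-symplectic hypothesis made explicit. Second, I would transport this to $X=F(Y)$ via the correspondences ${}^tP\circ\pi_4^Y\circ Q$ of Theorem~\ref{pi20}, so that the interesting motive $h^4(X)$ (the part carrying $A^4_{hom}$, via $h^8(X)$ and Poincaré duality) is controlled by $\mathfrak t$ and its tensor powers; because $\sigma$ on $X$ is induced by $\sigma_Y$, the operator $\ide+\sigma+\sigma^2$ on $X$ corresponds to $\ide+\sigma_Y+\sigma_Y^2$ on the $Y$-side. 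Third, I would show that $(\ide+\sigma+\sigma^2)_\ast$ composed with the projector onto $h^8(X)$ is a correspondence that is numerically trivial: this follows from the non-symplectic condition
\[ \bigl(\sigma_Y+\sigma_Y^2+\sigma_Y^3\bigr){}_\ast=0\colon\ H^i(Y,\OO_Y)\to H^i(Y,\OO_Y)\quad\forall i>0 \]
together with the Abel–Jacobi isometry identifying $H^4_{tr}(Y)$ with $H^2_{tr}(X)$ and the compatibility of the MCK/CK decomposition of Theorem~\ref{sv} with the self-map. Fourth, and finally, I would invoke Kimura's nilpotence theorem~\ref{nilp}: a numerically trivial self-correspondence on a finite-dimensional motive is nilpotent, hence the idempotent-completed piece it acts on through has the self-map acting as zero on Chow groups; applied to $A^4_{hom}(X)=A^4_{(4)}(X)\oplus(\text{lower}(j))$ this yields $(\ide+\sigma+\sigma^2)_\ast A^4_{hom}(X)=0$. (One must separately check the pieces $A^4_{(2)}(X)$ and $A^4_{(0)}(X)$: the former via corollary~\ref{decomp} and Theorem~\ref{mult}, reducing to the action on $A^2_{(2)}(X)\cong H^2_{tr}(X)$-type Chow group, the latter because $A^4_{(0)}(X)\cap A^4_{hom}(X)=0$ by Theorem~\ref{sv}'s vanishing, so there is nothing to prove there; but the $A^4_{(4)}$ piece is the substantive one.)

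**Main obstacle.** The delicate point is the $A^4_{(4)}(X)$ component — precisely the piece that resists the "spread" method in Theorem~\ref{main}, but which here is reachable because of finite-dimensionality. The work is in verifying that the correspondence $\Gamma:=(\ide+\sigma+\sigma^2)\circ\Pi^X_8$ (or the appropriate MCK projector composed with the automorphism) is genuinely \emph{numerically} trivial, not merely homologically trivial on a summand: this requires pinning down how $\sigma$ acts on all of $H^8(X)$, using that $H^8(X)=\operatorname{Sym}$-type combinations of $H^4(Y)$ via the Fano correspondence, and that on the transcendental part the non-symplectic order-$3$ action has no invariants in the relevant Hodge pieces while on the algebraic part $\ide+\sigma+\sigma^2$ is $3\cdot(\text{projector to invariants})$ — which does \emph{not} vanish, so one needs $A^4_{alg,hom}(X)=A^4_{(0),hom}(X)=0$ to dispose of it. Assembling these eigen-space bookkeeping facts cleanly, and checking the motivic decomposition of the triple cover is compatible with the Shen–Vial projectors $\{\Pi^X_i\}$, is where the real effort lies; once that is in place, Kimura nilpotence finishes it mechanically. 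I expect the argument to run closely parallel to \cite{nonsymp3}, with the cyclic-cover structure of $Y$ replacing whatever ad hoc input was used there.
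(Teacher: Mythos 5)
Your high-level strategy matches the paper's: establish finite-dimensionality of $h(X)$ (the paper outsources the finite-dimensionality of these triple-cover cubics $Y$ to an external reference and then passes to $F(Y)$ via \cite{fano}, while you sketch the cyclic-cover/branch-locus argument that presumably underlies that reference), verify $\sigma$ respects the Fourier decomposition, and run Kimura nilpotence piecewise on $A^2_{(2)}$, $A^4_{(2)}$, $A^4_{(4)}$. The $A^i_{(2)}$ pieces go through much as you say (though the paper does them by nilpotence directly, not via corollary \ref{decomp} and the spread machinery, which it reserves for the families where finite-dimensionality is unavailable).

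The gap is in your treatment of $A^4_{(4)}(X)$, which you correctly flag as the substantive case but then misdiagnose. First, a notational slip: the relevant projector is $\Pi^X_4$ (since $A^4_{(4)}(X)=(\Pi^X_4)_\ast A^4(X)$), not $\Pi^X_8$, which cuts out $A^4_{(0)}(X)\cong\QQ$. More importantly, your claim that $\Gamma=(\ide+\sigma+\sigma^2)\circ\Pi^X_4$ is numerically trivial is false: by the Verbitsky isomorphism $H^4(X)\cong\operatorname{Sym}^2 H^2(X)$, writing $H^2_{tr}(X)\otimes\C = T_\nu\oplus T_{\nu^2}$, the piece $T_\nu\otimes T_{\nu^2}\subset \operatorname{Sym}^2 H^2_{tr}(X)$ is $\sigma$-invariant and nonzero, so the transcendental part of $H^4(X)$ does carry invariants, contrary to what you assert; and in any case $\Delta_G\circ\Pi^X_4$ acts nontrivially on $H^4(X)^\sigma$. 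Your proposed fix (dispose of the algebraic part via $A^4_{(0),\hom}(X)=0$) does not engage with this, since $A^4_{(0)}$ sits under $\Pi^X_8$, not under the algebraic part of $H^4$ through $\Pi^X_4$. What one actually needs — and what the paper supplies as proposition \ref{44} (= \cite[Prop.~3.8]{nonsymp3}) — is a correspondence $R\in A^4(X\times X)$ with $\Delta_G\circ\Pi^X_4 = R$ in $H^8(X\times X)$ such that $R_\ast$ kills $A^4(X)$; only the difference $\Delta_G\circ\Pi^X_4-R$ is homologically trivial and eligible for the nilpotence theorem. Establishing such an $R$ amounts to showing the invariant cohomology $H^4(X)^\sigma$ is represented by a correspondence supported on loci of positive coniveau, which is a real Hodge-theoretic input that your sketch doesn't provide and that the paper itself notes cannot be obtained naively from Vial's niveau projectors without unproven standard-conjecture-type identifications.
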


  \begin{proof}  
 (The family of Fano varieties of theorem \ref{main0} is described in \cite[Example 6.4]{BCS}, from which I learned that the automorphism $\sigma$ is 
 non--symplectic.)
 
 We will use the Fourier decomposition $A^\ast_{(\ast)}(X)$ of the Chow ring of $X$ (theorem \ref{sv}). Indeed, to prove theorem \ref{main}, it suffices to prove
 the following statement:
 
 \begin{equation}\label{state}   (\ide +\sigma+\sigma^2)_\ast \ A^i_{(j)}(X)=0\ \ \ \hbox{for}\ (i,j)\in\{(2,2),(4,2),(4,4)\}\ .   \end{equation}
  
  Indeed, we observe that
    \[ \begin{split}  A^4_{hom}(X)&= A^4_{(2)}(X)\oplus A^4_{(4)}(X)\ ,\\
                       \end{split}\]
            \cite{SV}, and so (\ref{state}) implies theorem \ref{main}.
             
 Let us now prove (\ref{state}).
                            
   In a first step of the proof, we show that the automorphism $\sigma$ respects the Fourier decomposition of the Chow ring:
 
 \begin{proposition}\label{compat} Let $X$ and $\sigma$ be as in theorem \ref{main0}. Let $A^\ast_{(\ast)}(X)$ be the Fourier decomposition (theorem \ref{sv}).
 Then
   \[ \sigma_\ast \, A^i_{(j)}(X)\ \subset\ A^i_{(j)}(X)\ \ \ \forall (i,j)\ .\]
   Equivalently, if $\{\Pi_j^X\}$ is a CK decomposition as in theorem \ref{sv}, then we have
    \[ \sigma_\ast (\Pi_j^X)_\ast   = (\Pi_j^X)_\ast \sigma_\ast  (\Pi_j^X)_\ast\colon\ \ \ A^i(X)\ \to\ A^i(X)\ \ \ \forall (i,j)\ .\]  
     \end{proposition}
  
  \begin{proof} (NB: The proof actually works for any $(X,\sigma)$, where $X=F(Y)$ is the Fano variety is of lines on a smooth cubic fourfold $Y$, and $\sigma\in\aut(X)$ is a polarized automorphism, i.e. induced by an automorphism of $Y$.)
  
  We start by recalling that the Shen--Vial cycle $L\in A^2(X\times X)$ (furnishing the Fourier decomposition of \cite{SV}) is defined as
   \begin{equation}\label{def} L:= {1\over 3}(g_1^2+{3\over 2}g_1 g_2+g_2^2-c_1-c_2)-I\ \ \ \in A^2(X\times X)\ \end{equation}
   \cite[Equation (107)]{SV}. Here $I\subset X\times X$ is the incidence correspondence, and $g_i, c_i$ are defined as follows:
     \[ \begin{split} g&:=-c_1(\EE_2)\ \ \ \in\ A^1(X)\ ,\\
                        c&:=c_2(\EE_2)\ \ \ \in\ A^2(X)\ ,\\
                        g_i&:= (p_i)^\ast(g)  \ \ \ \in\ A^1(X\times X)\ \ \ (i=1,2)\ ,\\
                        c_i&:= (p_i)^\ast(c)  \ \ \ \in\ A^2(X\times X)\ \ \ (i=1,2)\ ,\\ 
                       \end{split}\] 
    where $\EE_2$ is the rank $2$ vector bundle coming from the tautological bundle on the Grassmannian, and $p_i\colon X\times X\to X$ denote the two projections.

  We now observe that
   \[ (\sigma\times\sigma)^\ast(I) = I\ \ \ \hbox{in}\ A^2(X\times X)\ ,\]
   because a point $y\in Y$ is contained in a line $\ell\in X$ if and only if $\sigma^\ast(y)$ is contained in $\sigma^\ast(\ell)$. 
       
    Next, we note that the automorphism $\sigma\in\aut(X)$ comes from an automorphism $\sigma_\PP$ of $\PP^5(\C)$, hence in particular $\sigma$ is the restriction of an automorphism $\sigma_{Gr}$ of the Grassmannian $Gr=G(1,5)$ of lines in $\PP^5$. As $\sigma_\PP$ is linear, $(\sigma_{Gr})$ fixes the tautological vector bundle $\EE_2$ on $Gr$, and so 
    \[ (\sigma\times\sigma)^\ast(c_i)=c_i\ ,\ \ \ (\sigma\times\sigma)^\ast(g_i)=g_i\ \ \ \hbox{in}\ A^\ast(X\times X)\ .\]
    
  Applying $(\sigma\times\sigma)^\ast$ to equation (\ref{def}), it follows that
   \[ (\sigma\times\sigma)^\ast (L) = L\ \ \ \hbox{in}\ A^2(X\times X)\ .\]
     Using Lieberman's lemma 
   \cite[Lemma 3.3]{V3}, this translates into the equality
   \[ \Gamma_\sigma\circ 
   L \circ \Gamma_{\sigma^{-1}} = L\ \ \ \hbox{in}\ A^2(X\times X)\ ,\]
  or equivalently
   \[ \Gamma_\sigma\circ L = L\circ \Gamma_\sigma\ \ \ \hbox{in}\ A^2(X\times X)\ . \]
   This implies that also
   \[ \Gamma_\sigma\circ L^r = L^r\circ \Gamma_\sigma\ \ \ \hbox{in}\ A^{2r}(X\times X)\ \ \ \forall r\in\NN\ . \]
   
    As the Fourier transform $\FF\colon A^\ast(X)\to A^\ast(X)$ of \cite{SV} is defined by means of a polynomial in $L$, it follows that
    \[ \FF(\sigma^\ast(a))=\sigma^\ast \FF(a)\ \ \ \forall a\in A^\ast(X)\ ,\]
    proving the proposition.
          \end{proof}

  The second step of the proof is to ascertain that $X$ has finite--dimensional motive:
  
  \begin{proposition}\label{findim0} Let $Y\subset\PP^5(\C)$ and $X=F(Y)$ be as in theorem \ref{main0}. Then $Y$ and $X$ have finite--dimensional motive.
  \end{proposition}
  
  \begin{proof} 
  The finite--dimensionality of $Y$ is proven in \cite{fam}.
    The main result of \cite{fano} now implies that the Fano variety $X=F(Y)$ also has finite--dimensional motive.            
  \end{proof} 
 
 The third step of the proof is to show the desired statement for $A^2_{(2)}(X)$, i.e. we now prove that
   \begin{equation}\label{22}    (\ide +\sigma+\sigma^2)_\ast \ A^2_{(2)}(X)=0\ .  \end{equation}
   
   In order to do so, let us abbreviate
   \[ \Delta_G:= \Delta_X +\Gamma_\sigma +\Gamma_{\sigma\circ\sigma}\ \ \ \in\ A^4(X\times X)\ .\]
   Since the action of $\sigma$ is non--symplectic \cite[Example 6.5 and Lemma 6.2]{BCS}, we have that
   \[ (\Delta_G)_\ast=0\colon\ \ \ H^2(X,\OO_X)\ \to\ H^2(X,\OO_X)\ .\] 
   Using the Lefschetz $(1,1)$--theorem, we see that
    \[  \Delta_G\circ \Pi_2^X =\gamma \ \ \ \hbox{in}\ H^8(X\times X)\ ,\]
    where $\gamma$ is some cycle supported on $D\times D\subset X\times X$, for some divisor $D\subset X$. In other words, the correspondence
     \[ \Gamma:=  \Delta_G\circ \Pi_2^X - \gamma  \ \ \ \in A^4(X\times X) \]
     is homologically trivial. But then (since $X$ has finite-dimensional motive) there exists $N\in\NN$ such that
     \[  \Gamma^{\circ N}=0\ \ \ \hbox{in}\ A^4(X\times X)\ .\]
     Upon developing this expression, one finds an equality    
     \[ \Gamma^{\circ N}=  (\Delta_G\circ \Pi_2^X)^{\circ N} +  \gamma^\prime=0\ \ \ \hbox{in}\ A^4(X\times X)\ ,\]
    where $\gamma^\prime$ is supported on $D\times D\subset X\times X$. In particular, $\gamma^\prime$ acts trivially on $A^2_{(2)}(X)\subset A^2_{AJ}(X)$, and so
      \[   \bigl((\Delta_G\circ \Pi_2^X)^{\circ N}\bigr){}_\ast=0\colon\ \ \ A^2_{(2)}(X)\ \to\ A^2(X)\ .\]
      Proposition \ref{compat} (combined with the fact that $\Delta_G$ and $\Pi_2^X$ are idempotents) implies that 
            \[  \bigl((\Delta_G\circ \Pi_2^X)^{\circ N}\bigr){}_\ast=   (\Delta_G\circ \Pi_2^X){}_\ast\colon\ \ \ A^i(X)\ \to\ A^i(X)\ ,\]
            and so we find that
            \[    \bigl(\Delta_G\circ \Pi_2^X\bigr){}_\ast= (\Delta_G)_\ast=  0\colon\ \ \ A^2_{(2)}(X)\ \to\ A^2(X)\ .\]     
        This proves equality (\ref{22}).    
        
   The argument for $A^4_{(2)}(X)$ is similar: the correspondence $\Gamma$ being homologically trivial, its transpose
   \[ {}^t \Gamma= \Pi_6^X\circ \Delta_G -\gamma^{\prime\prime}\ \ \ \in A^4(X\times X) \]
   is also homologically trivial (where $\gamma^{\prime\prime}$ is supported on $D\times D$). Using the nilpotence theorem and proposition \ref{compat}, this implies (just as above) that
   \begin{equation}\label{42}   ( \Pi_6^X\circ \Delta_G){}_\ast=    \bigl(\Delta_G\circ \Pi_6^X\bigr){}_\ast= (\Delta_G)_\ast=  0\colon\ \ \ A^4_{(2)}(X)\ \to\ A^4(X)\ .\end{equation}
        
  In the final step of the proof, it remains to consider the action on $A^4_{(4)}(X)$. 
  Ideally, one would like to use Vial's projector $\pi^X_{4,0}$ of \cite[Theorems 1 and 2]{V4} (mentioned in the proof of theorem \ref{pi20}). Unfortunately, this approach runs into problems. 
  
  (The problem is that it seems impossible to prove that 
    \[ \Delta_G\circ \pi^X_{4,0}=0\ \ \ \hbox{in}\ H^8(X\times X)\ ,\] 
    short of knowing that (1) $H^4(X)\cap F^1 =N^1 H^4(X)$, and (2)  $N^1 H^4(X)=\wt{N}^1 H^4(X)$, where $N^\ast$ is the usual coniveau filtration and $\wt{N}^\ast$ is Vial's niveau filtration. Both (1) and (2) seem difficult.)
  
 We therefore proceed somewhat differently: to establish the statement for $A^4_{(4)}(X)$, we use the following proposition:
   
  \begin{proposition}\label{44} Let $Y\subset\PP^5(\C)$ be a smooth cubic fourfold, and let $X=F(Y)$ be the Fano variety of lines in $Y$. Let $G\subset\aut(X)$ be a group of non--symplectic automorphisms.
   One has
    \[ \Delta_G\circ \Pi_4^X - R =0\ \ \ \hbox{in}\ H^8(X\times X)\ ,\]
    where $R\in A^4(X\times X)$ is a correspondence with the property that
      \[  R_\ast=0\colon\ \ \ A^4(X)\ \to\ A^4(X)\ .\]
    \end{proposition}
    
    \begin{proof} This is \cite[Proposition 3.8]{nonsymp3}.
     \end{proof}
    
    Obviously, proposition \ref{44} clinches the proof: using the nilpotence theorem, one sees that there exists $N\in\NN$ such that
    \[ \bigl(   \Delta_G\circ \Pi_4^X + R \bigr){}^{\circ N}=0\ \ \ \hbox{in}\ A^4(X\times X)\ .\]
   Developing, and applying the result to $A^4(X)$, one finds that
    \[       \bigl(  ( \Delta_G\circ \Pi_4^X)^{\circ N}\bigr){}_\ast=0\colon\ \ \ A^4(X)\ \to\ A^4(X)\ .\]
   Proposition \ref{compat} (combined with the fact that $\Delta_G$ and $\Pi_4^X$ are idempotents) implies that 
            \[  \bigl((\Delta_G\circ \Pi_4^X)^{\circ N}\bigr){}_\ast=        (\Delta_G\circ \Pi_4^X){}_\ast\colon\ \ \ A^i(X)\ \to\ A^i(X)\ \ \ \forall i\not=2\ .\]
         Therefore, we conclude that
            \[    \bigl(\Delta_G\circ \Pi_4^X\bigr){}_\ast= (\Delta_G)_\ast=  0\colon\ \ \ A^4_{(4)}(X)\ \to\ A^4(X)\ ,\] 
            and we are done.  

   \end{proof}

 \begin{remark} The family of cubic fourfolds of theorem \ref{main0} also occurs in \cite{ACT} and \cite{LS} (where it is used as a tool in understanding the period map for cubic threefolds), as well as in \cite{vGI} (where it is shown the Kuga--Satake construction for these cubic fourfolds is algebraic).
  \end{remark}

\section{The two remaining families}

In this section, we consider the two remaining families of cubic fourfolds with an order $3$ automorphism that acts non--symplectically on the Fano variety. The main result here is theorem \ref{main}, stating that part of conjecture \ref{conjhk} is true for these two families. In addition, in theorem \ref{main2} we exhibit 
two subfamilies (of the two families of theorem \ref{main}) for which conjecture \ref{conjhk} is completely verified.

\subsection{Main result}

\begin{theorem}\label{main} Let $(Y,\sigma_Y)$ be one of the following:
  
  \noindent
  (a)
  $Y\subset\PP^5(\C)$ is a smooth cubic fourfold defined by an equation
    \[ f(X_0,X_1,X_2)+ g(X_3,X_4)+ (X_5)^3 + X_5\bigl( X_3\ell_1(X_0,X_1,X_2)+X_4\ell_2(X_0,X_1,X_2)\bigr)=0\ ,\]
    where $f,g$ are homogeneous polynomials of degree $3$ and $\ell_1,\ell_2$ are linear forms. $\sigma_Y\in\aut(Y)$ is the order $3$ automorphism induced by
    \[  \begin{split} \PP^5(\C)\ &\to\ \PP^5(\C)\ ,\\
                         [X_0:\ldots:X_5]\ &\mapsto\ [X_0:X_1:X_2:\nu X_3: \nu X_4:\nu^2 X_5]\\
                         \end{split}\]
    (where $\nu$ is a $3$rd root of unity). 
    
  \noindent
  (b)
    $Y\subset\PP^5(\C)$ is a smooth cubic fourfold defined by an equation
    \[ \begin{split} X_2 f(X_0,X_1)+ X_3 g(X_0,X_1)+ (X_4)^2\ell_1(X_0,X_1) + X_4X_5\ell_2(X_0,X_1)+&\\ (X_5)^2\ell_3(X_0,X_1)+ X_4 h(X_2,X_3) +X_5 k(X_2,X_3)&=0\ ,\\
    \end{split}\]
    where $f,g,h,k$ are homogeneous polynomials of degree $2$ and $\ell_1,\ell_2$ are linear forms. $\sigma_Y\in\aut(Y)$ is the order $3$ automorphism induced by
    \[  \begin{split} \PP^5(\C)\ &\to\ \PP^5(\C)\ ,\\
                         [X_0:\ldots:X_5]\ &\mapsto\ [X_0:X_1:\nu X_2:\nu X_3: \nu^2 X_4:\nu^2 X_5]\\
                         \end{split}\]
    (where $\nu$ is a $3$rd root of unity). 
    
  Let $X=F(Y)$ be the Fano variety of lines in $Y$, and $\sigma\in\aut(X)$ the non--symplectic automorphism induced by $\sigma_Y$.  
    Then
    \[ \begin{split}  (\ide +\sigma+\sigma^2)_\ast \ A^2_{(2)}(X)&=0\ ,\\
                               (\ide +\sigma+\sigma^2)_\ast \ A^4_{(2)}(X)&=0\ .\\
                               \end{split}        \]
  \end{theorem}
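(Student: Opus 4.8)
The plan is to combine the refined Chow--K\"unneth decomposition of theorem \ref{pi20}, the transfer correspondences of corollary \ref{decomp}, and Voisin's equivariant spreading technique (proposition \ref{voisin2}) applied to the family of defining cubic fourfolds.

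\emph{Step 1: reduction to $A^2_{(2)}(X)$.} Set $\Delta_G:=\Delta_X+\Gamma_\sigma+\Gamma_{\sigma^2}\in A^4(X\times X)$. The class $\ell\in A^2_{(0)}(X)$ of theorem \ref{mult}(\rom1) is built from the Shen--Vial class $L$, so the computation in the proof of proposition \ref{compat} (valid for any polarized automorphism of the Fano variety of a smooth cubic fourfold, as noted there) gives $(\sigma\times\sigma)^\ast L=L$, hence $\sigma^\ast\ell=\ell$. Since $\sigma_\ast=(\sigma^{-1})^\ast$ is a ring homomorphism, $(\ide+\sigma+\sigma^2)_\ast(\ell\cdot a)=\ell\cdot(\ide+\sigma+\sigma^2)_\ast a$ for $a\in A^2_{(2)}(X)$; combined with the isomorphism $\cdot\ell\colon A^2_{(2)}(X)\xrightarrow{\cong}A^4_{(2)}(X)$, this shows the second equality in the theorem is equivalent to the first. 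By theorem \ref{pi20}, $(\pi^X_{2,0})_\ast A^2(X)=(\Pi^X_2)_\ast A^2(X)=A^2_{(2)}(X)$ with $\pi^X_{2,1}$ acting as $0$ on $A^2(X)$; so it suffices to prove that the $0$--correspondence $\Gamma:=\Delta_G\circ\pi^X_{2,0}\in A^4(X\times X)$ satisfies $\Gamma_\ast=0$ on $A^2(X)$.

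\emph{Step 2: homological triviality.} Since $\sigma$ is non--symplectic of order $3$, $\sigma^\ast$ acts on the line $H^{2,0}(X)$ by a primitive cube root of unity $\zeta$. Via the $\sigma$--equivariant Abel--Jacobi isometry $H^4_{tr}(Y)\cong H^2_{tr}(X)$, and because $\sigma_Y^\ast$ acts non--trivially on the lines $H^{3,1}(Y)$ and $H^{1,3}(Y)$, the rational sub--Hodge structure $H^4(Y)^{\sigma_Y}$ is of pure type $(2,2)$, hence algebraic (the Hodge conjecture being known for cubic fourfolds), hence contained in $N^2H^4(Y)$; as the cup product is non--degenerate on $N^2H^4(Y)$ (Hodge--Riemann relations), $N^2H^4(Y)$ and its orthogonal complement $H^4_{tr}(Y)$ are in direct sum, so $H^4_{tr}(Y)^{\sigma_Y}=0$ and hence $H^2_{tr}(X)^\sigma=0$. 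Consequently $(\ide+\sigma+\sigma^2)_\ast$ acts with eigenvalue $1+\zeta+\zeta^2=0$ on every $\sigma_\ast$--eigenline of $H^2_{tr}(X)$, i.e. $(\Delta_G)_\ast=0$ on $H^2_{tr}(X)=(\pi^X_{2,0})_\ast H^\ast(X)$; likewise $(\Delta_G^Y)_\ast=0$ on $H^4_{tr}(Y)$, where $\Delta_G^Y:=\Delta_Y+\Gamma_{\sigma_Y}+\Gamma_{\sigma_Y^2}$. Hence $\Gamma_\ast=0$ on $H^\ast(X)$, and by K\"unneth together with Poincar\'e duality $\Gamma=0$ in $H^8(X\times X)$.

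\emph{Step 3: spreading out over the family of cubic fourfolds.} Were $X$ known to have finite--dimensional motive, the nilpotence theorem \ref{nilp} would finish the proof; since it is not, I pass to the cubic fourfold $Y$, a hypersurface in $M:=\PP^5(\C)$. Let $B^\prime$ be the locus of smooth cubics in the linear system $\PP H^0(\PP^5,\OO(3))_\chi$, where $\chi$ is the character by which $\sigma_\PP$ acts on the equations of type (a) (resp. (b)); the group $G:=\langle\sigma_\PP\rangle$ acts trivially on this system. Over $B^\prime$ we have the universal cubic fourfold $\YY^\prime\to B^\prime$ and the universal Fano variety $\XX^\prime\to B^\prime$ with their fibrewise automorphisms $\sigma_Y,\sigma$, and the relative correspondences $\PPP\in A^3(\XX^\prime\times_{B^\prime}\YY^\prime)$, $\Psi\in A^5(\YY^\prime\times_{B^\prime}\XX^\prime)$ of corollary \ref{decomp}. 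As $\PPP$ is the relative universal family of lines it is $\sigma$--equivariant, so Lieberman's lemma gives $\Delta_G\circ{}^t\PPP={}^t\PPP\circ\Delta_G^Y$. Using corollary \ref{decomp} and theorem \ref{pi20}, the action of $\Gamma$ on $A^2(X_b)$ coincides with that of ${}^t\PPP_b\circ\Delta_G^Y\circ{}^t\Psi_b$, and --- up to a correspondence supported on a product of proper subvarieties, which annihilates $A^2_{hom}(X_b)\supseteq A^2_{(2)}(X_b)$ by a dimension count --- also with that of ${}^t\PPP_b\circ\RRR_b\circ{}^t\Psi_b$, where $\RRR:=\Delta_G^Y\circ\pi^{\YY^\prime}_{4,tr}$ and $\pi^{\YY^\prime}_{4,tr}$ is a relative version of Pedrini's transcendental projector over $B^\prime$. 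By Step 2, $\RRR|_{Y_b\times Y_b}=0$ in $H^8(Y_b\times Y_b)$ for all $b\in B^\prime$. Applying proposition \ref{voisin2} to $\RRR$ (with $M=\PP^5$, $L=\OO_{\PP^5}(3)$, $G=\langle\sigma_\PP\rangle$, and the eigen--linear--system $\PP H^0(\PP^5,\OO(3))_\chi$ in place of $|L|^G$ --- its proof only uses that this is a $G$--invariant sub--system) yields $\delta\in A^4(\PP^5\times\PP^5)$ with $\RRR|_{Y_b\times Y_b}=\delta|_{Y_b\times Y_b}$ for all $b$. Since $\PP^5\times\PP^5$ has trivial Chow groups, $\delta$ is a polynomial in the two hyperplane classes, so $\delta|_{Y_b\times Y_b}$ kills homologically trivial cycles; hence $\RRR|_{Y_b\times Y_b}$, and therefore $\Gamma$, acts as $0$ on $A^2_{(2)}(X_b)$. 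This proves $(\ide+\sigma+\sigma^2)_\ast A^2_{(2)}(X)=0$ and, by Step 1, the theorem.

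\emph{The main obstacle.} The hard part is verifying hypotheses (\rom1) and (\rom2) of proposition \ref{voisin2} for each of the two explicit families. For family (a) the system of $\sigma_\PP$--semi--invariant cubics is base--point--free, and one checks directly that the length--$2$ schemes imposing only one condition on it sweep out at most the graphs of $\sigma_\PP,\sigma_\PP^2$ together with loci of codimension $>5$ in $\wt{\PP^5\times\PP^5}$. For family (b) the system is \emph{not} base--point--free --- its base locus contains the $\sigma_\PP$--fixed line of the ``middle'' weight, which in fact lies on every member --- so one must first blow up this line (plus a few small loci) before the incidence variety becomes a projective bundle, and then check these extra loci are negligible in the sense allowed by (\rom1). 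Hypothesis (\rom2) is verified using that, for $b$ very general in $B^\prime$, the Picard group and transcendental cohomology of $Y_b$ are as small as the $\sigma_\PP$--symmetry permits. A subsidiary point is the existence over $B^\prime$ of the relative transcendental projector $\pi^{\YY^\prime}_{4,tr}$, obtained by globalizing Pedrini's construction (shrinking $B^\prime$ if needed), or by working at the generic point and specializing. Finally, the action on $A^4_{(4)}(X)$ genuinely escapes this method: one would need a relatively--defined correspondence cutting out the transcendental part of $A^4(X)$, and producing one seems to require $H^4(X)\cap F^1=N^1H^4(X)$ together with the coincidence of the coniveau and niveau filtrations on $H^4(X)$ (cf. remark \ref{problem}).
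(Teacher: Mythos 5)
Your overall strategy (establish that $\Delta_G\circ\pi^X_{2,0}$ vanishes homologically, transfer the correspondence to the family of cubics $\YY\to B$, then apply Voisin's equivariant spreading proposition \ref{voisin2}) is the same as the paper's, but there are two substantive deviations, and one of them is a genuine gap.

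\textbf{What is fine or merely different.} Your Step~1 reduction from $A^4_{(2)}$ to $A^2_{(2)}$ via $\sigma^\ast\ell=\ell$ and the isomorphism $\cdot\,\ell\colon A^2_{(2)}(X)\xrightarrow{\cong}A^4_{(2)}(X)$ is correct and is a pleasant alternative to the paper's argument (the paper obtains the $A^4_{(2)}$--statement by transposing the equality $\Gamma\vert_{Y_b\times Y_b}=\delta\vert_{Y_b\times Y_b}$ and using $\Pi_6^{X_b}={}^t\Pi_2^{X_b}$). Your Step~2 argument for $(\Delta_G)_\ast=0$ on $H^2_{tr}(X)$, going through the Hodge conjecture for cubic fourfolds to show $H^4_{tr}(Y)^{\sigma_Y}=0$, is valid but heavier than needed: the paper simply notes that $\ker(\Delta_G)_\ast$ is a rational sub--Hodge structure containing $H^{2,0}(X)$, while $H^2_{tr}(X)$ is the smallest such.

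\textbf{The gap.} In Step~3 you replace $\Delta_G^Y$ by $\RRR:=\Delta_G^Y\circ\pi^{\YY'}_{4,tr}$, which requires a \emph{relative} version of Pedrini's transcendental projector over $B'$. You flag this as a ``subsidiary point,'' but it is the crux. Pedrini's $\pi^Y_{4,alg}$ is defined by choosing algebraic cycles spanning $N^2H^4(Y)$, and the rank of $N^2H^4(Y_b)$ jumps on subloci of $B'$ (indeed the cubics here are special, carrying extra algebraic $(2,2)$--classes). Producing a single relative cycle in $A^4(\YY'\times_{B'}\YY')$ that restricts to $\pi^{Y_b}_{4,tr}$ on every fibre, or even on all fibres after shrinking $B'$, is not established by ``globalizing Pedrini's construction,'' and ``working at the generic point and specializing'' runs into the problem that the generic fibre lives over a function field where the Hodge--theoretic characterization of $\pi_{4,tr}$ is unavailable. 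The paper sidesteps this entirely: instead of a relative Pedrini projector, it uses the relative Shen--Vial projector $\Pi_2^\XX$ of proposition \ref{relck} (which exists relatively because the cycle $L$ of \cite[Eq.~(107)]{SV} is given by an explicit fibrewise formula), writes $\Delta_G^{X_b}\circ\Pi_2^{X_b}$ as homologically equal to a cycle $\gamma_b$ supported on (curve)$\times$(divisor), and then invokes Voisin's Hilbert--scheme argument \cite[Prop.~3.7]{V0} to upgrade the fibrewise $\gamma_b$ to a ``completely decomposed'' relative cycle $\gamma$ on $\XX\times_B\XX$. Only then does it transfer to $\YY\times_B\YY$ via ${}^t\Psi,{}^t\PPP$ and apply proposition \ref{voisin2}. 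If you want to repair your argument, replace the appeal to $\pi^{\YY'}_{4,tr}$ by this Hilbert--scheme mechanism. Finally, note that your ``main obstacle'' paragraph is indeed the content of the paper's lemma \ref{OK}: the verification there uses the weighted projective spaces $\PP(1^3,3^3)$ and $\PP(1^2,3^4)$ together with Delorme's very--ampleness criterion, and for hypothesis (\rom2) it uses the invariant Picard rank computations of \cite[Examples 6.6, 6.7]{BCS} (proposition \ref{chiara}). Your sketch is in the right spirit but would need this level of explicitness to constitute a proof.
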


 \begin{proof}  
 (NB: The two families of Fano varieties of theorem \ref{main} are described in \cite[Examples 6.6 and 6.7]{BCS}, where it is established that the automorphism 
 $\sigma$ is non--symplectic in both cases.)
                             
   In a first step of the proof, we show that the automorphism $\sigma$ respects the Fourier decomposition of the Chow ring:
 
 \begin{proposition}\label{compatagain} Let $X$ and $\sigma$ be as in theorem \ref{main}. Let $A^\ast_{(\ast)}(X)$ be the Fourier decomposition (theorem \ref{sv}).
 Then
   \[ \sigma_\ast \, A^i_{(j)}(X)\ \subset\ A^i_{(j)}(X)\ \ \ \forall (i,j)\ .\]
   Equivalently, if $\{\Pi_j^X\}$ is a CK decomposition as in theorem \ref{sv}, then we have
    \[ \sigma_\ast (\Pi_j^X)_\ast   = (\Pi_j^X)_\ast \sigma_\ast  (\Pi_j^X)_\ast\colon\ \ \ A^i(X)\ \to\ A^i(X)\ \ \ \forall (i,j)\ .\]  
     \end{proposition}
  
  \begin{proof} The proof is exactly the same as that of proposition \ref{compat}.  
          \end{proof}

  In the next step, we look at the action of $\sigma$ on $H^2(X)$ (where $(X,\sigma)$ is the Fano variety with the order $3$ automorphism as in theorem \ref{main}(a) or (b)). Let us use the short--hand 
    \[ \Delta_G:={1\over 3}\bigl(\Delta_X +\Gamma_\sigma+\Gamma_{\sigma^2}\bigr)\ \ \ \in A^4(X\times X)\ .\]
    The anti--symplectic condition translates into the fact that
    \[ (\Delta_G)_\ast=0\colon\ \ \ H^{2,0}(X)\ \to\ H^{2,0}(X)\ .\]
   Since the kernel of $(\Delta_G)_\ast$ is a sub--Hodge structure, and $H^2_{tr}(X)\subset H^2(X)$ is the smallest sub--Hodge structure containing $H^{2,0}(X)$, it follows that also
     \[ (\Delta_G)_\ast=0\colon\ \ \ H^{2}_{tr}(X)\ \to\ H^{2}_{tr}(X)\ .\]
    This means that
    \[ \Delta_G\circ \pi_{2,0}^X=0\ \ \ \hbox{in}\ H^8(X\times X) \]
    (where $\pi_{2,0}^X$ is as in theorem \ref{pi20}), and so
    \[ \Delta_G\circ \pi_2^X = \gamma \ \ \ \hbox{in}\ H^8(X\times X) \ ,\]
    where $\gamma$ is a cycle supported on $C\times D\subset X\times X$, where $C$ is a curve and $D$ is a divisor in $X$. In particular, since the projector 
    $\Pi_2^X$ of theorem \ref{sv} is equal to $\pi_2^X$ in cohomology, we get a homological equivalence
    \begin{equation}\label{onex}    \Delta_G\circ \Pi_2^X = \gamma \ \ \ \hbox{in}\ H^8(X\times X) \ ,\end{equation}
    where $\gamma$ is supported on $C\times D$.
    
    Now we can start to play the ``let's spread things out'' game of \cite{V0}, \cite{V1}. 
   We define
   \[ \pi\colon\ \ \YY\ \to\ B \]
  to be the family of all smooth cubic fourfolds given by an equation as in theorem \ref{main}. (That is, we let $G\subset\aut(\PP^5)$ be the order $3$ group generated by the automorphism of $\PP^5$ as in (a) or (b), and we define
    \[ B\ \subset\ \Bigl(\PP H^0\bigl(\PP^5,\OO_{\PP^5}(3)\bigr)\Bigr)^G \]
    as the Zariski open subset parametrizing smooth $G$--invariant cubics. Note that $\YY\to B$ is thus either the family of (a), or the family of (b); we treat the two families simultaneously.)  
    
 We will write $
    Y_b:=\pi^{-1}(b)$ for the fibre over $b\in B$. We also define the universal family of associated Fano varieties of lines:
    \[ \XX :=  \bigl\{ (\ell,b)\ \vert\  \ell\subset Y_b\bigr\}\ \ \ Gr\times B  \ ;   \]
   the fibre of $\XX$ over $b\in B$ is the Fano variety $X_b$ of lines in $Y_b$. Since $\XX$ is invariant under $\sigma_{Gr}\times\ide_B$, there is an automorphism
     \[ \sigma\colon\ \ \ \XX\ \to\ \XX\ .\]
     We will write $\sigma_b\colon X_b\to X_b$ for the restriction to a fibre. Let us define a relative correspondence
     \[ \Delta_G^\XX:={1\over 3}\, \bigl( \Delta_\XX +\Gamma_\sigma + \Gamma_{\sigma^2}\bigr)\ \ \ \in A^4(\XX\times_B \XX)\ ,\]
     and consider the composition 
      \[ \Delta_G^\XX\circ \Pi_2^\XX\ \ \ \in A^4(\XX\times_B \XX)\ .\]  
 (For composition of relative correspondences in the setting of smooth quasi--projective families that are smooth over a base $B$, cf.   
   \cite{CH}, \cite{GHM}, \cite{NS}, \cite{DM}, \cite[8.1.2]{MNP}.) 
      
      In view of (\ref{onex}) above, this correspondence has the following property: for every $b\in B$, there exist a curve $C_b$ and a divisor $D_b\subset X_b$, and a cycle $\gamma_b$ supported on $C_b\times D_b$, such that
      \[ \bigl(\Delta_G^\XX\circ \Pi_2^\XX\bigr)\vert_{X_b\times X_b}=\gamma_b
          \ \ \ \in H^8(\XX\times_B \XX)\ .\]  
         
     Applying Voisin's Hilbert schemes argument \cite[Proposition 3.7]{V0}, we can find a ``completely decomposed'' relative correspondence $\gamma\in A^4(\XX\times_B \XX)$
    such that
    \begin{equation}\label{vanish}  \bigl(  \Delta_G^\XX\circ \Pi_2^\XX      -\gamma\bigr)\vert_{X_b\times X_b} =0\ \ \ \hbox{in}\ H^8(  X_b\times X_b)
       \ \ \ \forall b\in B\ .\end{equation}
    (By ``completely decomposed'' we mean the following: there exist subvarieties $\CC, \DD\subset \XX$ of codimension $3$ resp. $1$, such that the cycle $\gamma$ is
    supported on $\CC\times_B \DD\subset \XX\times_B \XX$.)
    
    Now, we would like to apply proposition \ref{voisin2} (which acts as a magic wand, transforming a homological equivalence into a rational equivalence). For 
    this reason, it is more convenient to move things to the family $\YY$. That is, we define a relative correspondence
      \[  \Gamma:=     {}^t \Psi\circ\Bigl(   \Delta_G^\XX\circ \Pi_2^\XX      -\gamma\Bigr)\circ {}^t \PPP\ \ \ \in A^4(\YY\times_B \YY)       \ ,\]
      where $\Psi$ and $\PPP$ are as in corollary \ref{decomp}. (NB: in the present set--up, the base $B$ is different from corollary \ref{decomp}; our present $B$ is actually $B^\sigma$. Let $B_{univ}$ be the base parametrizing all smooth cubics (as in corollary \ref{decomp}), and let $B\subset B_{univ}$ be the base parametrizing $\sigma$--invariant cubics (as in the present proof). Then the present $\Psi$ and $\PPP$ are obtained from $\Psi_{univ}, \PPP_{univ}$ (of corollary \ref{decomp}) by pullback
      \[   \Psi:= \tau^\ast(\Psi_{univ})\ ,\ \ \ {}^t\PPP:=\tau^\ast({}^t \PPP_{univ})\ ,\]
      where $\tau\colon \YY\times_B \XX\to \YY_{univ}\times_{B_{univ}} \XX_{univ}$ is the base change.)

       Property (\ref{vanish}) implies there is a fibrewise homological vanishing
      \[ \Gamma\vert_{Y_b\times Y_b}=0\ \ \ \hbox{in}\ H^8(Y_b\times Y_b)\ \ \ \forall b\in B\ .\]
      Applying proposition \ref{voisin2} (which is possible thanks to lemma \ref{OK} below), we find there exists $\delta\in A^4(\PP^5\times \PP^5)$ such that
      \begin{equation}\label{this}  \Gamma\vert_{Y_b\times Y_b}=\delta\vert_{Y_b\times Y_b}\ \ \ \hbox{in}\ A^4(Y_b\times Y_b)\ \ \ \forall b\in B\ .\end{equation}
      Since $A^\ast_{hom}(\PP^5)=0$, the restriction $\delta\vert_{Y_b\times Y_b}$ acts trivially on $A^\ast_{hom}(Y_b)$, and thus
      \[       \bigl(  \Gamma\vert_{Y_b\times Y_b}\bigr){}_\ast=0\colon\ \ \ A^\ast_{hom}(Y_b)\ \to\ A^\ast_{hom}(Y_b)\ \ \ \forall b\in B\ .\]
      Composing on both sides, it follows that also
      \[  \bigl( ( {}^t \PPP\circ \Gamma\circ {}^t \Psi)\vert_{X_b\times X_b}\bigr){}_\ast=0\colon\ \ \     A^\ast_{hom}(X_b)\ \to\ A^\ast_{hom}(X_b)\ \ \ \forall b\in B\ \]
      (where $\PPP$ and $\Psi$ are as in corollary \ref{decomp}).
      By definition of $\Gamma$, this means that
     \[  \bigl( ( {}^t \PPP\circ {}^t \Psi\circ (\Delta_G^\XX\circ \Pi_2^\XX-\gamma)\circ {}^t \PPP  \circ {}^t \Psi)\vert_{X_b\times X_b}\bigr){}_\ast=0\colon\ \ \    
          A^\ast_{hom}(X_b)\ \to\ A^\ast_{hom}(X_b)\ \ \ \forall b\in B\ .\]
          In the light of corollary \ref{decomp}, this boils down to
        \[ \bigl( \Pi_2^{X_b}\circ (\Delta_G^\XX\circ \Pi_2^\XX-\gamma)\vert_{X_b\times X_b}\circ \Pi_2^{X_b}\bigr){}_\ast  =0\colon\ \ \    
          A^2_{hom}(X_b)\ \to\ A^2_{hom}(X_b)\ \ \ \forall b\in B\ .\]
       But $(\Pi_2^{X_b})_\ast A^2_{hom}(X_b)=A^2_{(2)}(X_b)$ and so
        \[ \bigl( \Pi_2^{X_b}\circ (\Delta_G^\XX\circ \Pi_2^\XX-\gamma)\vert_{X_b\times X_b}\bigr){}_\ast  =0\colon\ \ \    
          A^2_{(2)}(X_b)\ \to\ A^2_{}(X_b)\ \ \ \forall b\in B\ .\]  
     We can rewrite this as
      \[       \bigl( \Pi_2^{X_b}\circ \Delta_G^{X_b}\circ \Pi_2^{X_b}  - \Pi_2^{X_b}\circ (\gamma\vert_{X_b\times X_b})\bigr){}_\ast =0\colon\ \ \    
          A^2_{(2)}(X_b)\ \to\ A^2_{}(X_b)\ \ \ \forall b\in B\ .\]  
          But for general $b\in B$, the restriction $\gamma\vert_{X_b\times X_b}$ will be supported on (curve)$\times$(divisor), and as such will act trivially on $A^2(X_b)$. It follows that
          \[  \bigl( \Pi_2^{X_b}\circ \Delta_G^{X_b} \bigr){}_\ast=0\colon\ \ \ A^2_{(2)}(X_b)\ \to\ A^2_{}(X_b)\ \ \ \hbox{for\ general\ } b\in B\ .\]  
          In view of proposition \ref{compat}, this implies that
         \[ ( \Delta_G^{X_b} ){}_\ast=0\colon\ \ \ A^2_{(2)}(X_b)\ \to\ A^2_{}(X_b)\ \ \ \hbox{for\ general\ } b\in B\ ,\]
          i.e. we have proven the first statement of theorem \ref{main} for general $b\in B$. To extend this to {\em all\/} $b\in B$, we observe that the above construction can be made locally around a given $b_0\in B$: the point is that in applying \cite[Proposition 3.7]{V0}, one can obtain a cycle $\gamma_{}$ supported on $\CC\times_B \DD$ such that $\CC$ and $\DD$ are in general position with respect to $X_b$. The above argument then proves the statement for $X_{b_0}$.
          
        The second statement of theorem \ref{main} (i.e., the action on $A^4_{(2)}(X)$) can be recovered from the above argument. Indeed, taking the transpose on both sides of equation (\ref{this}), one obtains that for all $b\in B$, there is equality
     \[ {}^t   ( \Gamma\vert_{Y_b\times Y_b})={}^t  (\delta\vert_{Y_b\times Y_b})\ \ \ \hbox{in}\ A^4(Y_b\times Y_b)\ .\]   
     As the right--hand side acts trivially on $A^\ast_{hom}(Y_b)$, this implies that
     \[ \bigl(  {}^t   ( \Gamma\vert_{Y_b\times Y_b})\bigr){}_\ast =0\colon\ \ \ A^\ast_{hom}(Y_b)\ \to\ A^\ast_{hom}(Y_b)\ \ \ \forall b\in B\ .\]
     By definition of $\Gamma$ (combined with the fact that $\Pi_6^{X_b}={}^t \Pi_2^{X_b}$), this means that
      \[  \bigl( ( {} \Psi\circ {} \PPP\circ (\Pi_6^\XX\circ\Delta_G^\XX -{}^t \gamma)\circ {} \Psi  \circ {} \PPP)\vert_{X_b\times X_b}\bigr){}_\ast=0\colon\ \ \    
          A^\ast_{hom}(X_b)\ \to\ A^\ast_{hom}(X_b)\ \ \ \forall b\in B\ .\]
     Using corollary \ref{decomp}, this simplifies to
     \[       \bigl( \Pi_6^{X_b}\circ (\Pi_6^\XX\circ\Delta_G^\XX -{}^t \gamma)\vert_{X_b\times X_b}\circ \Pi_6^{X_b}  \bigr){}_\ast=0\colon\ \ \    
          A^4_{hom}(X_b)\ \to\ A^4_{hom}(X_b)\ \ \ \forall b\in B\ ,\]
       i.e.
       \[    \bigl( \Pi_6^{X_b}\circ (\Delta_G^\XX -{}^t \gamma)\vert_{X_b\times X_b} \bigr){}_\ast=0\colon\ \ \    
          A^4_{(4)}(X_b)\ \to\ A^4_{}(X_b)\ \ \ \forall b\in B\ .\]   
        This can be rewritten as
      \[       \bigl( \Pi_6^{X_b}\circ \Delta_G^{X_b}  - \Pi_6^{X_b}\circ ({}^t (\gamma\vert_{X_b\times X_b}))\bigr){}_\ast =0\colon\ \ \    
          A^4_{(2)}(X_b)\ \to\ A^4_{}(X_b)\ \ \ \forall b\in B\ .\]   
          For general $b\in B$, the transpose of the restriction ${}^t (\gamma\vert_{X_b\times X_b})$ is supported on (divisor)$\times$(curve), and hence will act trivially on $A^4(X_b)$. It follows that
          \[  \bigl( \Pi_6^{X_b}\circ \Delta_G^{X_b}\bigr){}_\ast =0\colon\ \ \ A^4_{(2)}(X_b)\ \to\ A^4(X_b)\ \ \      \hbox{for\ general\ } b\in B\ .\]  
       As $\sigma$ preserves the Fourier decomposition (proposition \ref{compat}), this implies that
        \[ (   \Delta_G^{X_b}){}_\ast =0\colon\ \ \ A^4_{(2)}(X_b)\ \to\ A^4(X_b)\ \ \      \hbox{for\ general\ } b\in B\ .\]    
        As above, this can be extended to {\em all\/} $b\in B$.
        
      Since we have used proposition \ref{voisin2}, we need to check the hypotheses of this proposition are satisfied. This is taken care of by the following lemma:
        
\begin{lemma}\label{OK} Let $\YY\to B$ be one of the families of cubics as in (a) or (b) of theorem \ref{main}. Then $\YY\to B$ verifies the hypotheses of proposition \ref{voisin2} (with $M=\PP^5(\C)$, $L=\OO_{\PP^5}(3)$ and $G=<\sigma_\PP>$, where $\sigma_\PP\in\aut(\PP^5)$ is as in theorem \ref{main}(a) resp. (b)).
    \end{lemma}
        
    \begin{proof} First, let us check that hypothesis (\rom1) of proposition \ref{voisin2} is met with in case (a). To this end, we consider the tower of morphisms
   \[ p\colon\ \ \PP^5\ \xrightarrow{p_1}\ P^\prime:= \PP^5/G\ \xrightarrow{p_2}\ P:=\PP(1^3,3^3)\ ,\]
   where $\PP(1^3,3^3)=\PP^5/(\ZZ/3\ZZ\times \ZZ/3\ZZ\times \ZZ/3\ZZ)$ denotes a weighted projective space. Let us write $\sigma_3, \sigma_4, \sigma_5$ for the order $3$ automorphisms of $\PP^5$ 
   \[  \begin{split}  &\sigma_3 [x_0:\ldots:x_5] = [x_0:x_1:x_2:\nu x_3: x_4:x_5]\ ,\\
                               &\sigma_4 [x_0:\ldots:x_5] = [x_0:x_1:x_2:x_3:\nu x_4:x_5]\ ,\\
                            &\sigma_5 [x_0:\ldots:x_5] = [x_0:x_1:x_2:x_3:x_4:\nu x_5]\ ,\\
                        \end{split} \] 
                        where $\nu$ is again a primitive $3$rd root of unity.   
   (We note that $\sigma_\PP=\sigma_3\circ\sigma_4\circ\sigma_5$, and the weighted projective space $P$ is $\PP^5/ <\sigma_3,\sigma_4,\sigma_5>$.)
   
   The sections in $\bigl(\PP H^0\bigl(\PP^5,\OO_{\PP^5}(3)\bigr)\bigr)^G$ are in bijection with $ \PP H^0\bigl(P^\prime,\OO_{P^\prime}(3)\bigr) $, and there is an inclusion
   \[  \Bigl(\PP H^0\bigl(\PP^5,\OO_{\PP^5}(3)\bigr)\Bigr)^G \ \supset\ p^\ast \PP H^0\bigl( P,\OO_P(3)\bigr)\ .\]  
   
   Let us now assume $x,y\in\PP^5$ are two points such that
   \[ (x,y)\not\in \Delta_{\PP^5}\cup \bigcup_{r_3,r_4,r_5\in\{0,1,2\}} \Gamma_{(\sigma_3)^{r_3}\circ (\sigma_4)^{r_4}\circ (\sigma_5)^{r_5}}\ .\]
   Then 
   \[ p(x)\not=p(y)\ \ \ \hbox{in}\ P\ ,\]
   and so (using lemma \ref{delorme} below) there exists $\sigma\in\PP H^0\bigl(P^\prime,\OO_{P^\prime}(3)\bigr)$ containing $p(x)$ but not $p(y)$. The pullback $p^\ast(\sigma)$ contains $x$ but not $y$, and so these points $(x,y)$ impose $2$ independent conditions on the parameter space $ \bigl(\PP H^0\bigl(\PP^5,\OO_{\PP^5}(3)\bigr)\bigr)^G$.
   
   It only remains to check that a generic element 
    \[ (x,y)\ \ \ \in\Gamma_{(\sigma_3)^{r_3}\circ (\sigma_4)^{r_4}\circ (\sigma_5)^{r_5}}  \] 
    also imposes $2$ independent conditions, for any $(r_3,r_4,r_5)\not\in\{(0,0,0),(1,1,1),(2,2,2)\}$. Let us assume $(x,y)$ is generic on $\Gamma_{\sigma_3}$ (the argument for the other $r_i$ are similar). Let us write $x=[a_0:a_1:\ldots:a_5]$. By genericity, we may assume all $a_i$ are $\not=0$ (intersections of $\Gamma_{\sigma_3}$ with a coordinate hyperplane have codimension $>n+1$ and so need not be considered for hypothesis (\rom1) of proposition \ref{voisin2}). We can thus write
   \[ x=[1:a_1:a_2:a_3:a_4:a_5]\ ,\ \ \ y= [1:a_1:a_2:\nu a_3:a_4:a_5] \ ,\ \ \ a_i\not=0 \ .\] 
   The cubic 
   \[  a_4 (x_0)^2x_3 - a_3(x_0)^2x_4=0 \]
   is $G$--invariant and contains $x$ while avoiding $y$, and so the element $(x,y)$ again imposes $2$ independent conditions. This proves hypothesis (\rom1) is satisfied for case (a).
   
  The proof of hypothesis (\rom1) for case (b) is similar, using the weighted projective space $\PP(1^2,3^4)$.
  
   To establish hypothesis (\rom2) of proposition \ref{voisin2}, we proceed by contradiction. Let us suppose hypothesis (\rom2) is not met with, i.e. there exists a smooth cubic $Y=Y_b$ as in theorem \ref{main}(b) or (c), and a non--trivial relation
  \[  a\,\Delta_{Y} +b\, \Gamma_{\sigma} + c\, \Gamma_{\sigma^2}+ \delta =0\ \ \ \hbox{in}\ H^8(Y\times Y)\ ,\]
  where $a,b,c\in \QQ$ and $\delta\in\ima\bigl( A^4(\PP^5\times\PP^5)\to A^4(Y\times Y)\bigr)$.
  Looking at the action on a generator $\omega$ of $H^{3,1}(Y)$ (and using that $\delta_\ast(\omega)=0$ because $H^{p,q}(\PP^5)=0$ for $p\not=q$), we find a relation
   \[ a+ \nu b + \nu^2 c=0\ .\]
   Next, looking at the action on $(H^4(Y)_{prim})^\sigma$ (which is non--zero, for there is an equivariant isomorphism $H^4(Y)\cong H^2(X)$, and proposition \ref{chiara} below ensures that $\dim H^2(X)^\sigma>1$), we find a relation
   \[ a+b+c=0\ .\]
Combining these two relations, we find that the only solutions are $a=\nu c$, $b=\nu^2 c$. It follows that there are no rational solutions, and so hypothesis (\rom2) is satisfied.   
    
  \begin{lemma}\label{delorme} Let $P$ be either $\PP(1^3,3^3)$ or $\PP(1^2,3^4)$. Let $r,s\in P$ and $r\not=s$. Then there exists $\sigma\in\PP H^0\bigl(P,\OO_P(3)\bigr)$ containing $r$ but avoiding 
  $s$.
  \end{lemma}
  
  \begin{proof} It follows from Delorme's work \cite[Proposition 2.3(\rom3)]{Del} that the locally free sheaf $\OO_P(3)$ is very ample. This means there exists 
  $\sigma$ as required.
    \end{proof}
  
\begin{proposition}[Boissi\`ere--Camere--Sarti \cite{BCS}]\label{chiara} Let $(X,\sigma)$ be a Fano variety $X=F(Y)$ with a polarized order $3$ automorphism $\sigma\in\aut(X)$ as in theorem \ref{main}(a) or (b). Let $T\subset NS(X)$ denote the $\sigma$--invariant part of the N\'eron--Severi group of $X$.

The dimension of $T$ equals $7 $ in case (a), and equals $9$ in case (b).
\end{proposition}  

\begin{proof} The lattice $T$ is completely determined in \cite[Examples 6.6 and 6.7]{BCS}.
  \end{proof}
  
  \end{proof}

        \end{proof}

 \begin{remark}\label{problem} To prove the full conjecture \ref{conjhk} for the two families of theorem \ref{main}, it remains to prove that
   \[ (\Delta_G)_\ast A^4_{(4)}(X)\stackrel{??}{=}0\ .\]
  The above argument is not strong enough to prove this, for the following reason: at one point in the argument, we moved from the family $\XX$ (of Fano varieties) to the family $\YY$ (of cubics), using the correspondences $\Psi$ and $\PPP$ of corollary \ref{decomp}. This worked because (thanks to corollary \ref{decomp}) this moving back and forth did {\em not\/} affect $A^4_{(2)}(X)$ and $A^2_{(2)}(X)$. However, this cannot work for $A^4_{(4)}(X)$.
  
  (One might expect, in light of \cite{fano}, that $A^4_{(4)}(X)$ is related to $A^6(Y^{[2]})$. If so, one could move from $A^\ast(\XX\times_B \XX)$ to $A^\ast( \YY^{4/B})$. Then, one would need a version of proposition \ref{voisin2} for the $4$th fibre product $\YY^{4/B}$; this seems difficult !) 
   \end{remark}

 \subsection{Two subfamilies}
 
  In this subsection, we restrict to two special subfamilies (of the families of theorem \ref{main}), in order for Kimura's notion of {\em finite--dimensional motive\/} \cite{Kim} to apply.

  \begin{theorem}\label{main2} Let $(Y,\sigma_Y)$ be one of the following:
  
  \noindent
  (a)
  $Y\subset\PP^5(\C)$ is a smooth cubic fourfold defined by an equation
    \[ f(X_0,X_1,X_2)+ g(X_3,X_4)+ (X_5)^3 =0\ ,\]
   and $\sigma_Y\in\aut(Y)$ is as in theorem \ref{main}(a).
   
     \noindent
  (b)
    $Y\subset\PP^5(\C)$ is a smooth cubic fourfold defined by an equation
    \[  f(X_0,X_1)+g(X_2,X_3)+h(X_4,X_5)=0\ ,\]
   and $\sigma_Y\in\aut(Y)$ is as in theorem \ref{main}(b).
    
  Let $X=F(Y)$ be the Fano variety of lines in $Y$, and let $\sigma\in\aut(X)$ be the non--symplectic automorphism induced by $\sigma_Y$.  
    Then
    \[ (\ide +\sigma+\sigma^2)_\ast \ A^4_{hom}(X)=0\ .\]
\end{theorem}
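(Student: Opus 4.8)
The plan is to run, for these two subfamilies, exactly the argument used to prove theorem \ref{main0}. The extra ingredient available here (and the reason for restricting to these special equations) is that $X=F(Y)$ has finite--dimensional motive in the sense of Kimura \cite{Kim}; once this is established, the conclusion follows verbatim, and — in contrast with theorem \ref{main} — it yields the vanishing on \emph{all} of $A^4_{hom}(X)=A^4_{(2)}(X)\oplus A^4_{(4)}(X)$ \cite{SV}, including the piece $A^4_{(4)}(X)$ left open in remark \ref{problem}.

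\textbf{Step 1: finite--dimensionality.} In case (a) the defining cubic $f(X_0,X_1,X_2)+g(X_3,X_4)+(X_5)^3$ is of the shape $\widetilde f(X_0,\ldots,X_4)+(X_5)^3$, so $Y$ has finite--dimensional motive by \cite{fam}. In case (b), I would first note that the cubic $Y\colon f(X_0,X_1)+g(X_2,X_3)+h(X_4,X_5)=0$ admits a dominant, generically finite rational map from the abelian fourfold $A:=E_1\times E_2\times E_3\times E_0$, where $E_1,E_2,E_3\subset\PP^2$ are the elliptic curves $\{f(X_0,X_1)+W^3=0\}$, $\{g(X_2,X_3)+W^3=0\}$, $\{h(X_4,X_5)+W^3=0\}$ and $E_0\subset\PP^2$ is the Fermat cubic curve $\{U^3+V^3+T^3=0\}$; the map is the usual Shioda ``inductive structure'' assignment
\[ \Bigl([x_0{:}x_1{:}w_1],[x_2{:}x_3{:}w_2],[x_4{:}x_5{:}w_3],[u{:}v{:}t]\Bigr)\ \longmapsto\ \bigl[\tfrac{u}{w_1}x_0:\tfrac{u}{w_1}x_1:\tfrac{v}{w_2}x_2:\tfrac{v}{w_2}x_3:\tfrac{t}{w_3}x_4:\tfrac{t}{w_3}x_5\bigr]\ . \]
Since abelian varieties have finite--dimensional motive, and finite--dimensionality descends along dominant maps (passing to a smooth model of the closure of the graph, exactly as in \cite{fam}), $Y$ has finite--dimensional motive. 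In both cases, the main theorem of \cite{fano} then shows that $X=F(Y)$ has finite--dimensional motive.

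\textbf{Step 2: the Chow--group vanishing.} Write $\Delta_G:=\Delta_X+\Gamma_\sigma+\Gamma_{\sigma^2}\in A^4(X\times X)$. By \cite{BCS} the automorphism $\sigma$ is non--symplectic of order $3$, so $\langle\sigma\rangle$ is a group of non--symplectic automorphisms and proposition \ref{compat} applies verbatim (its proof uses only that $\sigma$ is polarized, i.e.\ induced by an automorphism of $Y$). For the pieces $A^2_{(2)}(X)$ and $A^4_{(2)}(X)$ I would repeat the argument of theorem \ref{main0}: the non--symplectic condition gives $\Delta_G\circ\Pi^X_2=\gamma$ in $H^8(X\times X)$ with $\gamma$ supported on a product (divisor)$\times$(divisor); the nilpotence theorem \ref{nilp} (available by Step 1) upgrades the homologically trivial correspondence $\Delta_G\circ\Pi_2^X-\gamma$ to a rationally trivial one after composing with a high power, and proposition \ref{compat} together with the idempotency of $\Delta_G$ and $\Pi_2^X$ then forces $(\Delta_G)_\ast=0$ on $A^2_{(2)}(X)=(\Pi_2^X)_\ast A^2(X)$; taking transposes gives the analogous vanishing on $A^4_{(2)}(X)=(\Pi_6^X)_\ast A^4(X)$. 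For the remaining piece $A^4_{(4)}(X)$ I would invoke proposition \ref{44}: it gives $\Delta_G\circ\Pi_4^X-R=0$ in $H^8(X\times X)$ with $R_\ast=0$ on $A^4(X)$; by theorem \ref{nilp} some power of $\Delta_G\circ\Pi_4^X+R$ is rationally trivial, and developing this, using $R_\ast=0$ on $A^4(X)$ and proposition \ref{compat}, yields $(\Delta_G)_\ast=0$ on $A^4_{(4)}(X)=(\Pi_4^X)_\ast A^4(X)$. Summing the two pieces gives $(\ide+\sigma+\sigma^2)_\ast A^4_{hom}(X)=0$.

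\textbf{Where the work lies.} The only genuinely new point — and the sole place where the special form of the equations enters — is Step 1, the finite--dimensionality of $Y$. Case (a) is immediate from \cite{fam}; in case (b) the task is to check that the Shioda--type domination by the abelian fourfold $A$ can be arranged so that finite--dimensionality really passes to $Y$ (that is, to control the indeterminacy of the rational map and the resulting blow--up centres), which is done along the lines of \cite{fam}. Everything afterwards is a routine transcription of the proof of theorem \ref{main0}; once finite--dimensionality is in hand it closes precisely the gap identified in remark \ref{problem} for the general families of theorem \ref{main}.
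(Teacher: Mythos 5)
Your proof is correct and follows the same route as the paper's: establish finite--dimensionality of $X$ via a Shioda--type domination plus \cite{fano}, then combine the nilpotence theorem (theorem \ref{nilp}) with propositions \ref{compat} and \ref{44}. The only differences are cosmetic — the paper handles $A^2_{(2)}(X)$ and $A^4_{(2)}(X)$ simply by citing theorem \ref{main} (the subfamilies of theorem \ref{main2} are contained in those of theorem \ref{main}) and reserves the nilpotence argument for $A^4_{(4)}(X)$ alone, whereas you re-run the argument of theorem \ref{main0} for all three pieces; and for case (b) the paper invokes Shioda's inductive structure \cite{Sh}, \cite{KS} together with finite--dimensionality of cubic curves, surfaces and threefolds, where your explicit dominant rational map from the abelian fourfold $E_1\times E_2\times E_3\times E_0$ spells out the same idea concretely.
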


  \begin{proof} Since we have
    \[ A^4_{hom}(X)=A^4_{(4)}(X)\oplus A^4_{(2)}(X)\ ,\]
    and we already know (theorem \ref{main}) that
     \[ (\ide +\sigma+\sigma^2)_\ast \ A^4_{(2)}(X)=0\ ,\]  
   it suffices to prove that also
      \begin{equation}\label{sub4} (\ide +\sigma+\sigma^2)_\ast \ A^4_{(4)}(X)=0\ .\end{equation}    
       
  We will exploit the following fact:
  
  \begin{lemma}\label{findim} Let $X$ be as in theorem \ref{main2}. Then $X$ has finite--dimensional motive.
  \end{lemma}
  
  \begin{proof} One first proves the cubics $Y$ as in theorem \ref{main2} have finite--dimensional motive. This follows from Shioda's inductive structure \cite{Sh}, \cite[Remark 1.10]{KS}, combined with the fact that cubic curves, cubic surfaces and cubic threefolds have finite--dimensional motive. The result for $X=F(Y)$ now follows from \cite{fano}.
    \end{proof}
  
  Now, we go on to prove (\ref{sub4}). Proposition \ref{44} applies, and so we have
   \[ \Delta_G\circ \Pi_4^X - R =0\ \ \ \hbox{in}\ H^8(X\times X)\ ,\]
    where $R\in A^4(X\times X)$ is a correspondence acting trivially on $A^4(X)$. 
 We conclude just as at the end of the proof of theorem \ref{main0}: using the nilpotence theorem (theorem \ref{nilp}), one finds $N\in\NN$ such that
    \[ \bigl(   \Delta_G\circ \Pi_4^X + R \bigr){}^{\circ N}=0\ \ \ \hbox{in}\ A^4(X\times X)\ .\]
   Developing, and applying the result to $A^4(X)$, it follows that
    \[       \bigl(  ( \Delta_G\circ \Pi_4^X)^{\circ N}\bigr){}_\ast=0\colon\ \ \ A^4(X)\ \to\ A^4(X)\ .\]
    Proposition \ref{compat} (combined with the fact that $\Delta_G$ and $\Pi_4^X$ are idempotents) implies that 
            \[  \bigl((\Delta_G\circ \Pi_4^X)^{\circ N}\bigr){}_\ast=        (\Delta_G\circ \Pi_4^X){}_\ast\colon\ \ \ A^i(X)\ \to\ A^i(X)\ \ \ \forall i\ .\]
         Therefore, we may conclude that
            \[    \bigl(\Delta_G\circ \Pi_4^X\bigr){}_\ast= (\Delta_G)_\ast=  0\colon\ \ \ A^4_{(4)}(X)\ \to\ A^4(X)\ .\]   
    \end{proof}

\section{Some corollaries}

\subsection{A succinct restatement}
   
  \begin{corollary}\label{done}  Let $X$ be the Fano variety of lines of a smooth cubic fourfold. Let $\sigma\in\aut(X)$ be a polarized, prime order automorphism that is non--symplectic. Then 
  \[   (\Delta_G)_\ast A^i_{(2)}(X)=0\ \ \ \hbox{for}\ i\in\{2,4\}\ .\]
   \end{corollary}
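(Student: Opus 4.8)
The plan is to deduce Corollary \ref{done} by assembling the results already established in this note, together with the corresponding statements of \cite{nonsymp3}, \cite{inv} and \cite{inv2}, once one invokes the classification of polarized non--symplectic automorphisms of prime order on Fano varieties of lines of cubic fourfolds.

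First I would recall that, $\sigma$ being polarized, it is induced by a linear automorphism $\sigma_Y$ of the cubic fourfold $Y$ with $X=F(Y)$; here $\Delta_G$ denotes the symmetrizing correspondence ${1\over p}\sum_{j=0}^{p-1}\Gamma_{\sigma^j}$ with $p$ the order of $\sigma$, so that the vanishing to be proven is insensitive to the normalizing factor. The classification then says that such a $\sigma$ is either an anti--symplectic involution, or an automorphism of order $3$, in which case $(Y,\sigma_Y)$ is one of the four families of \cite[Examples 6.4, 6.5, 6.6, 6.7]{BCS}. I would go through these cases. If $p=2$, the assertion $(\Delta_G)_\ast A^i_{(2)}(X)=0$ for $i\in\{2,4\}$ is (a special case of) the main results of \cite{inv}, \cite{inv2} on anti--symplectic involutions. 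If $p=3$: for the family of \cite[Example 6.5]{BCS} this is contained in the main theorem of \cite{nonsymp3}; for the family of \cite[Example 6.4]{BCS} it is Theorem \ref{main0}; and for the two families of \cite[Examples 6.6 and 6.7]{BCS} it is Theorem \ref{main}. In the first two sub--cases one even has $(\ide+\sigma+\sigma^2)_\ast A^4_{hom}(X)=0$, and since $A^4_{hom}(X)=A^4_{(2)}(X)\oplus A^4_{(4)}(X)$ this yields the $i=4$ statement, while the $i=2$ statement is the intermediate vanishing on $A^2_{(2)}(X)$ established en route in the proof of Theorem \ref{main0} (and analogously in \cite{nonsymp3}); in the last two sub--cases both statements are exactly what Theorem \ref{main} asserts. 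Summing over $j$ then gives $(\Delta_G)_\ast A^i_{(2)}(X)=0$ in every case.

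The only point requiring genuine attention is the input classification itself --- namely that a polarized non--symplectic automorphism of prime order of $F(Y)$ is exhausted by the list above (anti--symplectic involutions, together with the four order--$3$ families of \cite{BCS}). Granting this, the corollary is pure bookkeeping: the hypotheses needed to invoke each of the cited theorems were verified in each case (in particular Lemma \ref{OK} checks the hypotheses of Proposition \ref{voisin2} for the two families of Theorem \ref{main}), and no new argument is needed.
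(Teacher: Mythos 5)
Your proposal is correct and follows the same strategy as the paper: invoke the classification (via \cite{GAL} and \cite[Remark 6.3]{BCS}) of polarized prime--order non--symplectic automorphisms to reduce to order $2$ and order $3$, then dispatch the cases by citing \cite{inv}, \cite{inv2} for involutions and \cite{nonsymp3}, Theorem \ref{main0}, Theorem \ref{main} for the four order--$3$ families of \cite[Examples 6.4--6.7]{BCS}. Your bookkeeping (splitting $A^4_{hom}=A^4_{(2)}\oplus A^4_{(4)}$ in the stronger cases and extracting the $A^2_{(2)}$ vanishing from the intermediate step of those proofs) is sound, and in fact your attribution of families to theorems is more precise than the paper's own terse phrasing.
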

  
  \begin{proof} The point is that smooth cubic fourfolds with automorphisms of prime order have been classified. If the induced automorphism on the Fano variety $X$ is non--symplectic, the only possibilities are that $\sigma$ is of order $2$ or $3$ (\cite{GAL}, combined with the corrigendum in \cite[Remark 6.3]{BCS} to rule out the case of order $5$). 
  
  For $\sigma$ of order $2$, there are two families of cubic fourfolds, and these have been treated in \cite{inv}, \cite{inv2}. For $\sigma$ of order $3$, there are $4$ families \cite[Examples 6.4, 6.5, 6.6 and 6.7]{BCS}; the first is treated in \cite{nonsymp3}, the others in theorems \ref{main0} and \ref{main}.
  \end{proof}

  \subsection{Bloch conjecture}

   \begin{corollary}\label{triv} Let $X$ and $\sigma$ be as in theorem \ref{main0} or theorem \ref{main2}. Let $Z:=X/<\sigma>$ be the quotient. Then
  \[ A^4(Z)\cong \QQ\ .\]
 \end{corollary}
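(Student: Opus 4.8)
The plan is to deduce this from Theorem \ref{main0} (resp. Theorem \ref{main2}) via the formalism of motives of quotient varieties. First I would note that $Z=X/\langle\sigma\rangle$ is a projective quotient variety in the sense of Section \ref{ssquot}, so Proposition \ref{quot} applies and the Chow groups of $Z$ behave formally like those of a smooth projective variety. By the remark following Proposition \ref{quot}, there is an isomorphism of Chow motives $h(Z)\cong h(X)^{\langle\sigma\rangle}=(X,\Delta_G,0)$, where $\Delta_G={1\over 3}(\Delta_X+\Gamma_\sigma+\Gamma_{\sigma^2})$ is the associated idempotent. Passing to Chow groups, this gives $A^4(Z)=(\Delta_G)_\ast A^4(X)$.

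Next I would split off the homologically trivial part of $A^4(X)$. Since $X$ is a connected smooth projective fourfold we have $H^8(X,\QQ)\cong\QQ$, and the degree map provides a (non-canonical) decomposition $A^4(X)=\QQ\, o_X\oplus A^4_{hom}(X)$, where $o_X$ is the class of any point of $X$. Applying the projector $(\Delta_G)_\ast$ and invoking Theorem \ref{main0} (in the situation of that theorem) or Theorem \ref{main2} (in the other cases)---which says exactly that $(\ide+\sigma+\sigma^2)_\ast A^4_{hom}(X)=0$, hence $(\Delta_G)_\ast A^4_{hom}(X)=0$---one obtains $A^4(Z)=(\Delta_G)_\ast A^4(X)=\QQ\cdot(\Delta_G)_\ast o_X$, a space of dimension at most one. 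It is exactly one-dimensional because $\deg\,(\Delta_G)_\ast o_X={1\over 3}(1+1+1)=1$ (the maps $\sigma,\sigma^2$ being automorphisms preserve the degree of a zero-cycle), so $(\Delta_G)_\ast o_X$ is nonzero. Therefore $A^4(Z)\cong\QQ$, equivalently $A^4_{hom}(Z)=0$.

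Since all the substance has already been established in Theorems \ref{main0} and \ref{main2}, there is no genuine obstacle in this corollary; the only point requiring a little attention is the passage to the quotient, which is taken care of by Fulton's Proposition \ref{quot} together with Manin's identity principle as recorded in the remark following it.
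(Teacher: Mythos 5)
Your proof is correct and takes essentially the same route as the paper: the paper simply invokes the natural isomorphism $A^4(Z)\cong A^4(X)^\sigma$ (citing its earlier paper \cite{nonsymp3} for the details of the argument you spell out, namely the passage via $h(Z)\cong (X,\Delta_G,0)$, the degree decomposition of $A^4(X)$, and Theorem \ref{main0}/\ref{main2} to kill $(\Delta_G)_\ast A^4_{hom}(X)$). Your version is just a more self-contained unwinding of the same idea.
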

 
 \begin{proof} This readily follows from the natural isomorphism $A^4(Z)\cong A^4(X)^\sigma$, cf. \cite[Proof of Corollary 4.1]{nonsymp3}.
 \end{proof}

\subsection{Generalized Hodge conjecture}

  \begin{corollary}\label{ghc} Let $X$ and $\sigma$ be as in theorem \ref{main0} or theorem \ref{main2}. Then the invariant part of cohomology
   \[ H^4(X)^\sigma  \ \subset \ H^4(X) \]
   is supported on a divisor.
   \end{corollary}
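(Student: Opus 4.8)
The plan is to deduce this from corollary \ref{triv} via the Bloch--Srinivas decomposition of the diagonal, in the spirit of \cite[Section 4]{nonsymp3} (and \cite{inv}, \cite{inv2} for the order $2$ case).

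To begin with, the hypotheses here coincide with those of corollary \ref{triv}, so the quotient $Z:=X/<\sigma>$ satisfies $A^4(Z)=CH_0(Z)_\QQ\cong\QQ$. Since $Z$ is a projective quotient variety, the formalism of correspondences makes sense on $Z$ (proposition \ref{quot}), and by the usual spreading--out argument --- using that the hypothesis of corollary \ref{triv} is satisfied by every member of the relevant family of Fano varieties of cubic fourfolds --- the triviality of $CH_0(Z)_\QQ$ holds over any algebraically closed overfield of $\C$. This is precisely what is needed in order to invoke \cite{BS}: one obtains a decomposition
\[ \Delta_Z=\Gamma_1+\Gamma_2\ \ \ \hbox{in}\ A^4(Z\times Z)\ ,\]
where $\Gamma_1$ is supported on $Z\times\{z_0\}$ for some point $z_0\in Z$, and $\Gamma_2$ is supported on $D_Z\times Z$ for some divisor $D_Z\subset Z$.

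Next I would let $\Delta_Z$, $\Gamma_1$, $\Gamma_2$ act on cohomology by pullback. The correspondence $(\Gamma_1)^\ast$ factors through $H^\ast(\{z_0\})$ and hence vanishes on $H^k(Z)$ for $k>0$; the correspondence $(\Gamma_2)^\ast$ factors through $H^{\ast-2}$ of a resolution of $D_Z$, so that $(\Gamma_2)^\ast H^k(Z)$ restricts to $0$ on $Z\setminus D_Z$. As $\Delta_Z$ acts as the identity, it follows that $H^4(Z)=(\Gamma_2)^\ast H^4(Z)$ is supported on the divisor $D_Z$. Finally I would pull back along the finite quotient morphism $\pi\colon X\to Z$: since $\pi^\ast$ induces an isomorphism $H^\ast(Z,\QQ)\xrightarrow{\cong}H^\ast(X,\QQ)^\sigma$, and a class supported on $D_Z$ pulls back to a class restricting to $0$ on $X\setminus\pi^{-1}(D_Z)$, we conclude that $H^4(X)^\sigma=\pi^\ast H^4(Z)$ is supported on the divisor $\pi^{-1}(D_Z)\subset X$, which is the assertion.

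The one step deserving attention is the ``universal'' triviality of $CH_0(Z)$ required to apply the Bloch--Srinivas argument; everything else is formal. This, however, is a routine spreading--out entirely analogous to \cite[Section 4]{nonsymp3}.
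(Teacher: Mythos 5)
Your argument is correct and takes essentially the same route as the paper, which also deduces the statement from the Bloch--Srinivas decomposition (the paper simply refers to the analogous \cite[Proof of Corollary 4.2]{nonsymp3}); the only cosmetic difference is that you decompose $\Delta_Z$ on the quotient $Z$, whereas \cite{nonsymp3} decomposes the averaged correspondence $\Delta_G$ on $X$ itself, and via $h(Z)\cong h(X)^G$ these are interchangeable. One small remark on the step you flag: the persistence of $CH_0(Z)_\QQ\cong\QQ$ over larger algebraically closed fields is not really obtained by ``spreading out in the family'' (the base change $Z_{\overline{\C(Z)}}$ is the same $Z$ over a bigger field, not another $\C$-member of the family); the correct justification is either that $\C$ is already a universal domain so \cite{BS} applies as stated, or the standard rigidity of $CH_0\otimes\QQ$ under extension of algebraically closed base field --- but this does not affect the validity of your conclusion.
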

   
   \begin{proof} This is an application of the Bloch--Srinivas argument \cite{BS}, cf. \cite[Proof of Corollary 4.2]{nonsymp3}.
%
   \end{proof}

 \subsection{Intersection of cycles}  
   
%
   
\begin{corollary}\label{ring} Let $X$ and $\sigma$ be as in theorem \ref{main0} or theorem \ref{main}. Let $a\in A^3(X)$ be a $1$--cycle of the form
  \[ a=\displaystyle\sum_{i=1}^r b_i\cdot D_i\ \ \ \in A^3(X)\ ,\]
  where $b_i\in A^2(X)^\sigma$ and $D_i\in A^1(X)_{}$. Then $a$ is rationally trivial if and only if $a$ is homologically trivial.
\end{corollary}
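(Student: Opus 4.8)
The plan is to exploit the $\sigma$--invariance of the $b_i$ in order to push $a$ into the summand $A^3_{(0)}(X)$ of the Fourier decomposition, and then to invoke Shen--Vial's result that the cycle class map is injective on $A^3_{(0)}(X)$. One implication is trivial, so assume $a=\sum_{i=1}^r b_i\cdot D_i$ is homologically trivial, with $b_i\in A^2(X)^\sigma$ and $D_i\in A^1(X)$. Since $\sigma$ preserves the Fourier decomposition (proposition \ref{compat}, resp. proposition \ref{compatagain}), I would first write $b_i=b_i^{(0)}+b_i^{(2)}$ with $b_i^{(j)}\in A^2_{(j)}(X)$; uniqueness of this decomposition together with $\sigma_\ast b_i=b_i$ forces $b_i^{(j)}\in A^2_{(j)}(X)^\sigma$, whence $(\ide+\sigma+\sigma^2)_\ast b_i^{(2)}=3\,b_i^{(2)}$. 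But $(\ide+\sigma+\sigma^2)_\ast A^2_{(2)}(X)=0$: this is part of the statement of theorem \ref{main} in the two families of that theorem, and it is established in the course of the proof of theorem \ref{main0} in the remaining family. Therefore $b_i^{(2)}=0$ for every $i$, that is, $A^2(X)^\sigma\subset A^2_{(0)}(X)$.

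Next I would invoke the multiplicative behaviour of the Fourier decomposition established in \cite{SV}: the intersection product of a class in $A^2_{(0)}(X)$ with a divisor class lies in $A^3_{(0)}(X)$. (This is one of the components of the bigraded ring structure of \cite{SV} that holds unconditionally for every Fano variety of a smooth cubic fourfold; the product left open in general is $A^2_{(0)}(X)\cdot A^2_{(0)}(X)\subset A^4_{(0)}(X)$, cf. remark \ref{pity}.) It follows that $a=\sum_i b_i\cdot D_i\in A^3_{(0)}(X)$. Finally, again by \cite{SV}, one has $A^3_{(0)}(X)\cap A^3_{hom}(X)=0$, i.e. the cycle class map is injective on $A^3_{(0)}(X)$; since $a$ was assumed homologically trivial, this gives $a=0$.

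There is no serious obstacle here, the substance of the statement being contained in theorems \ref{main0} and \ref{main}. The one point that needs attention is to make sure one uses only those parts of the Shen--Vial structural results --- the inclusion $A^2_{(0)}(X)\cdot A^1(X)\subset A^3_{(0)}(X)$ and the injectivity of the cycle class map on $A^3_{(0)}(X)$ --- that are valid for \emph{all} Fano varieties of smooth cubic fourfolds, rather than the refinements that require $Y$ to be very general. Since no genericity hypothesis enters, the corollary holds for every member of the two families.
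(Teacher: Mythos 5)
Your proposal is correct and follows essentially the same route as the paper: use $\sigma$--equivariance of the Fourier decomposition plus the vanishing of $(\ide+\sigma+\sigma^2)_\ast$ on $A^2_{(2)}(X)$ to land $A^2(X)^\sigma$ inside $A^2_{(0)}(X)$, then multiply by divisors into $A^3_{(0)}(X)$ and invoke injectivity of the cycle class map there. The only small imprecision is the attribution of the inclusion $A^2_{(0)}(X)\cdot A^1(X)\subset A^3_{(0)}(X)$: the paper cites \cite[Proposition A.7]{FLV} for the unconditional statement, noting that it improves on \cite[Proposition 22.7]{SV}, rather than crediting \cite{SV} alone as holding unconditionally.
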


\begin{proof} Since $\sigma$ respects the bigrading $A^\ast_{(\ast)}(X)$ (proposition \ref{compat}), theorem \ref{main0} or theorem \ref{main} implies that
  \[ A^2(X)^\sigma\ \subset\ A^2_{(0)}(X)\ .\]
  Since $A^2_{(0)}(X)\cdot A^1(X)_{}\subset A^3_{(0)}(X)$ (this is \cite[Proposition A.7]{FLV}, which improves upon
  \cite[Proposition 22.7]{SV}), it follows that
  \[ a\ \ \in A^3_{(0)}(X)\ .\]
  But $A^3_{(0)}(X)$ injects into cohomology under the cycle class map \cite{SV}.
  ( A quick way of proving this injectivity can be as follows: let $\FF$ be the Fourier transform of \cite{SV}. We have that $a\in A^3(X)$ is in $A^3_{(0)}(X)$ if and only if $\FF(a)\in A^1_{(0)}(X)=A^1(X)$ \cite[Theorem 2]{SV}. Suppose $a\in A^3_{(0)}(X)$ is homologically trivial. Then also $\FF(a)\in A^1(X)$ is homologically trivial, hence $\FF(a)=0$ in $A^1(X)$. But then, using \cite[Theorem 2.4]{SV}, we find that
      \[ {25\over 2} a = \FF\circ \FF(a)=0\ \ \ \hbox{in}\ A^3(X)\ .)\]
    \end{proof}
               
%
  
 \begin{corollary}\label{cheat} Let $X$ and $\sigma$ be as in theorem \ref{main0} or theorem \ref{main}. Let $\phi\colon X\dashrightarrow X$ be the degree $16$ rational map first defined in \cite{V21}. Let $a\in A^2(X)$ be a $2$--cycle of the form
   \[ a=\phi^\ast(b)-4b\ \ \ \in A^2(X)\ ,\]
   where $b\in A^2(X)$ is a sum of $\sigma$--invariant cycles and intersections of divisors. Then $a$ is rationally trivial if and only if $a$ is homologically trivial.
 \end{corollary}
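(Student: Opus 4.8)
The plan is to run the same argument as in the proof of corollary \ref{ring}: one shows that the cycle $a$ automatically lies in the ``algebraic'' Fourier piece $A^2_{(0)}(X)$, and then concludes using the injectivity of the cycle class map on that piece.

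First I would check that $b\in A^2_{(0)}(X)$. By proposition \ref{compat} the automorphism $\sigma$ respects the bigrading $A^\ast_{(\ast)}(X)$, so theorem \ref{main0} (resp. theorem \ref{main}) yields $A^2(X)^\sigma\subset A^2_{(0)}(X)$ --- this is precisely the inclusion already used in the proof of corollary \ref{ring}. Moreover $A^1(X)=A^1_{(0)}(X)$ and $A^1_{(0)}(X)\cdot A^1_{(0)}(X)\subset A^2_{(0)}(X)$ (\cite{SV}, cf. also \cite{FLV}), so any intersection of two divisors also lies in $A^2_{(0)}(X)$. Since $b$ is by hypothesis a sum of cycles of these two types, we get $b\in A^2_{(0)}(X)$.

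Next I would invoke the fact that Voisin's degree $16$ rational self--map $\phi$ of \cite{V21} respects the Fourier decomposition, i.e. $\phi^\ast A^i_{(j)}(X)\subset A^i_{(j)}(X)$ for all $(i,j)$; this follows from the explicit description of the graph $\Gamma_\phi\in A^4(X\times X)$ in terms of the incidence correspondence and the Pl\"ucker polarization (\cite[Section 21]{SV}, cf. also \cite{FLV}). In particular $\phi^\ast(b)\in A^2_{(0)}(X)$, whence $a=\phi^\ast(b)-4b\in A^2_{(0)}(X)$ (the precise coefficient $4$ is irrelevant to this inclusion). Since the cycle class map restricted to $A^2_{(0)}(X)$ is injective --- equivalently $A^2_{hom}(X)=A^2_{(2)}(X)$, which is established in \cite{SV} --- a homologically trivial cycle lying in $A^2_{(0)}(X)$ must vanish in $A^2(X)$. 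This proves the non--trivial implication; the converse is obvious.

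The one delicate point, and the main obstacle, is the compatibility of $\phi^\ast$ with the bigrading invoked in the previous paragraph. For Fano varieties of very general cubic fourfolds this would be part of the (weak) multiplicativity of the Fourier decomposition, but for the special cubic fourfolds occurring in theorems \ref{main0} and \ref{main} --- for which even $A^2_{(0)}(X)\cdot A^2_{(0)}(X)\subset A^4_{(0)}(X)$ is not known (remark \ref{pity}) --- one has to fall back on the explicit formula for $\Gamma_\phi$. Granting that formula, the inclusion $\phi^\ast A^2_{(0)}(X)\subset A^2_{(0)}(X)$ is a short computation and the remainder of the proof is formal.
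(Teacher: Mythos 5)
Your proposal contains a genuine gap, and in fact it misidentifies where the difficulty lies. You claim that ``the cycle class map restricted to $A^2_{(0)}(X)$ is injective --- equivalently $A^2_{hom}(X)=A^2_{(2)}(X)$, which is established in \cite{SV}.'' This is \emph{not} established: the inclusion $A^2_{(2)}(X)\subset A^2_{hom}(X)$ is clear, but the equality $A^2_{(0)}(X)\cap A^2_{hom}(X)=0$ is precisely what is \emph{not} known, as the remark immediately following corollary \ref{cheat} in the paper spells out. If your claimed injectivity were available, the coefficient $4$ would indeed be irrelevant and one could even drop $\phi^\ast$ entirely, proving a stronger (conjectural) statement; the whole point of the peculiar shape of the cycle $a=\phi^\ast(b)-4b$ is to work around this obstruction. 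So the argument as written begs the question at the final step.

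The paper's actual proof uses a finer structure on $A^2_{(0)}(X)$ coming from Voisin's self--map $\phi$. Shen--Vial show \cite[Theorem 21.9]{SV} that $\phi^\ast$ acts semisimply on $A^2_{(0)}(X)$ with an eigenspace decomposition $A^2_{(0)}(X)=V^2_{31}\oplus V^2_{-14}\oplus V^2_{4}$, and that the potentially troublesome part $A^2_{(0)}(X)\cap A^2_{hom}(X)$ sits inside the eigenspace $V^2_4$ \cite[Lemma 21.12]{SV}. The operator $\phi^\ast-4\cdot\mathrm{id}$ annihilates $V^2_4$ and maps $A^2_{(0)}(X)$ onto $V^2_{31}\oplus V^2_{-14}$, which therefore injects into cohomology. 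Thus the coefficient $4$ is crucial, not incidental. Your identification of the ``delicate point'' as the compatibility of $\phi^\ast$ with the bigrading is also off the mark: that compatibility is a byproduct of the eigenspace decomposition, and is not the serious issue --- the serious issue is killing the $V^2_4$ component, which your proof never addresses. The first half of your argument (that $b\in A^2_{(0)}(X)$, using $A^2(X)^\sigma\subset A^2_{(0)}(X)$ and $A^1(X)\cdot A^1(X)\subset A^2_{(0)}(X)$) is correct and agrees with the paper.
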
 
 
 \begin{proof} We know from theorem \ref{main0} or theorem \ref{main} (combined with the fact that $A^1(X)\cdot A^1(X)\subset A^2_{(0)}(X)$ \cite[Lemma 4.4]{SV}) that $b$ is in $A^2_{(0)}(X)$. 
Let $V^2_\lambda$ denote the eigenspace 
    \[ V^2_\lambda:=\{ c\in A^2(X)\ \vert\ \phi^\ast(c)=\lambda\cdot c\}\ .\] 
Shen--Vial have proven that there is a decomposition
  \[ A^2_{(0)}(X)=V^2_{31}\oplus V^2_{-14}\oplus V^2_{4}\ \]
    \cite[Theorem 21.9]{SV}. The ``troublesome part'' $A^2_{(0)}(X)\cap A^2_{hom}(X)$ is contained in $V^2_4$ \cite[Lemma 21.12]{SV}. This implies that
   \[ (\phi^\ast-4(\Delta_X)^\ast)A^2_{(0)}(X)=V^2_{31}\oplus V^2_{-14} \]
 injects into cohomology.  
       \end{proof}

 \begin{remark} Corollary \ref{cheat} is probably less than optimal: conjecturally, for any $b\in A^2(X)$ as in corollary \ref{cheat},
 one should have that $b$ is rationally trivial if and only if $b$ is homologically trivial. The problem, in proving such a statement along the lines of the present note, is that we cannot prove that
   \[ A^2_{(0)}(X)\cap A^2_{hom}(X)\stackrel{??}{=}0\ .\] 
  \end{remark}

\vskip1cm
\begin{nonumberingt} Thanks to all participants of the Strasbourg 2014/2015 ``groupe de travail'' based on the monograph \cite{Vo} for a very stimulating atmosphere.
Many thanks to Mrs. Yasuyo Ishitani, Director of the Special Program ``O-Uchi De O-Shigoto'', for an excellent working environment.
\end{nonumberingt}

\vskip1cm

\end{document}